\PassOptionsToPackage{a4paper,margin=25mm}{geometry}

\RequirePackage{iftex}
\ifpTeX
  \PassOptionsToPackage{dvipdfmx}{graphicx}
  \PassOptionsToPackage{dvipdfmx}{hyperref}
\fi

\documentclass[12pt]{article}
\usepackage{da-common}
\usepackage{paper2-h4}
\usetikzlibrary{arrows.meta}

\title{The Fourth Geometry II: From Angle Axioms to Metric Foundations}
\author{%
Masanori Nakazato\\
Mita International School of Science, Tokyo, Japan\\
\texttt{masa727axio.math@gmail.com}\\[6pt]
\small Formerly at the Graduate School of Science, Tohoku University (M.Sc.)
}
\date{January 20, 2026}

\begin{document}

\maketitle


         \begin{abstract}
This paper is a sequel to arXiv:2511.01024 (Base 1), where an axiomatic framework for angles
and the foundations of difference–angle geometry were introduced.

In difference–angle geometry, where the difference of slopes of lines is treated as a primary angular quantity (the difference angle), we reconstruct the focal structure of parabolas from a difference–angle–theoretic viewpoint and develop the associated algebraic and analytic structures.
First, we introduce the difference–angle focal function and define the focus of a parabola constructively as its zero set. This approach yields a formulation of the parabolic power that differs from that presented in Base 1.
Next, by interpreting the power as a classical representation of an inner product, we derive a difference–angle version of the parallelogram theorem via a polarization identity, and thereby define the difference–angle inner product as a pseudo–inner product. The robustness of this structure is substantiated by deriving a difference–angle version of Stewart's theorem based solely on computations involving the difference–angle inner product.

Furthermore, we define the parabolic trigonometric functions $\cosp\theta$ and $\sinp\theta$ (together with related functions) associated with a difference angle $\theta$, and show that they satisfy identities corresponding to the first and second cosine laws in Euclidean geometry.
Finally, we reexamine the Cayley–Klein angle and distance derived from Laguerre's formula, and in particular verify that the existing Cayley–Klein angle satisfies the axiomatic system for angles introduced in Base 1. We then show that, in the parabolic limit of the absolute conic, the difference angle and the difference–angle norm arise naturally as the linear degeneration of the logarithmic cross ratio.
\end{abstract}

\begin{notation}
We refer to objects of Euclidean geometry translated into the framework of
difference--angle geometry as \emph{difference--angle} objects.
When no confusion arises, we omit this qualifier.

Throughout this paper we use the following notation:
\begin{itemize}
  \item $\R$ : the set of real numbers.
  \item $x_P$ : the $x$--coordinate of a point $P$.
  \item $\Slp{\ell}$ : the slope of a line $\ell$.
  \item $\measuredangle_{\mathcal{P}} XYZ$ : the difference angle at the vertex $Y$ of the triangle $\triangle_{\mathcal{P}} XYZ$.
  \item $\measuredangle_{\mathcal{P}} A$, $\measuredangle_{\mathcal{P}} B$, $\measuredangle_{\mathcal{P}} C$ : abbreviations of $\measuredangle_{\mathcal{P}} BAC$, $\measuredangle_{\mathcal{P}} CBA$, and $\measuredangle_{\mathcal{P}} ACB$, respectively.
  \item $\theta_A$, $\theta_B$, $\theta_C$ : variables used to denote the above three interior angles concisely in calculations and proofs.
  \item $\langle u,v\rangle_{\mathcal{P}}$, $|u|_{\mathcal{P}}$ : the inner product and the norm in difference--angle geometry.
\end{itemize}

\end{notation}


\section{Introduction}

\subsection{Motivation}
Since classical times, Euclidean geometry has developed refined axiomatizations of distance and area, while angles have often been treated as secondary quantities subordinate to length or rotation.
Motivated by this imbalance, the author proposed an axiomatic treatment of angles as primary quantities and constructed, as a concrete model, the \emph{difference angle} (the difference of slopes) and the associated \emph{Difference--Angle geometry} (hereafter, \emph{DA geometry}); see Nakazato~\cite{Nakazato2025Base1}.

In the present paper (Base~2), we further develop DA geometry and demonstrate how the structural backbone of Euclidean geometry—namely, power of a point, the parallelogram theorem, and the inner product—can be reconstructed from a viewpoint in which the difference angle is taken as fundamental.

From the first half through the final part of this paper, the main results are as follows:
\begin{itemize}
\item[(i)] We introduce the \emph{difference--angle focal function}, which characterizes the focus in DA geometry, and define the focus constructively as its zero set.
\item[(ii)] Based on this construction, we define the \emph{parabolic power} and derive a parabolic power theorem from a formulation different from that presented in Base~1.
\item[(iii)] Interpreting the power as a classical representation of an inner product, we derive a difference--angle version of the parallelogram theorem via a polarization identity, and define the \emph{difference--angle inner product} as a pseudo--inner product.
\item[(iv)] As an application, we derive a difference--angle version of Stewart's theorem using only computations with the difference--angle inner product.
\end{itemize}

Furthermore, one of the central achievements of this paper is the introduction of the \emph{parabolic trigonometric functions} $\cosp$ and $\sinp$.
Their validity is supported by the fact that they satisfy identities corresponding to the first and second cosine laws, and by their ability to reflect the structural features of classical theorems in Euclidean geometry.
In particular, inspired by aspects not explicitly treated in Weiss--Odehnal~\cite{WeissOdehnal2024}, we complement this perspective by deriving the corresponding theorems within the framework of DA geometry.

\subsection{Position within Cayley--Klein Geometry}
The significance of the final part of this paper lies in clarifying the position of DA geometry within the framework of Cayley--Klein (CK) geometry.
In the CK setting, DA geometry arises as the limit in which the absolute conic degenerates to a parabola, known as the \emph{parabolic degeneration}.
Classically, angles are defined via cross ratios based on Laguerre's idea; however, in the parabolic degeneration, the cross--ratio formulas become indeterminate.

In this paper, we first reformulate Laguerre's formula in a modern framework and define the standard CK angle, showing that it satisfies the angle axioms~A1--A5.
Next, for a one--parameter family of absolute conics
\[
Q_t:\ y = \kappa x^2 + t \qquad (\kappa >0),
\]
we define a parabolic angle using suitable isotropic lines and show that the difference angle appears as the linear degeneration limit as $t \to 0$.
Similarly, since the linear degeneration limit of the CK distance yields the difference--angle norm, DA geometry is clearly identified as a degeneration model of CK geometry that is distinct from previously known geometries.

\subsection{Treatment of Ideal Points}
In DA geometry, the structure at infinity plays an essential role.
The line at infinity in the projective plane splits into two distinct ideal points: $\infty_G$, corresponding to the projective reference direction, and $\infty_S$, corresponding to the singular direction.
This splitting gives rise to phenomena not observed in existing geometric frameworks, such as the degeneration of the triangle inequality into equality and the multiplicity of zero vectors.

In this paper, we illustrate this intuition only to the minimal extent necessary, in order to assist the understanding of subsequent theorems.
In particular, the mechanism by which angular quantities converge to the difference angle in the parabolic limit of the Cayley--Klein angle becomes transparent once this structure at infinity is made explicit.

\begin{figure}[H]
  \centering
  \begin{minipage}{0.30\textwidth}
    \centering
    \includegraphics[width=\linewidth]{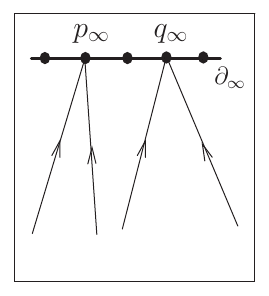}
  \end{minipage}
  \caption{An affine plane with the line at infinity $\ell_{\infty}$.
  Finite segments asymptotically approach two ideal points $p,q \in \ell_{\infty}$.}
 \label{fig:affine-ivt}
\end{figure}

\subsection{Historical Note}
Within the classical framework of CK geometry, when the absolute conic degenerates into a parabola tangent to the line at infinity, the cross--ratio formulas for both distance and angle become indeterminate, and no explicit, coherent model was established to define them simultaneously.
The present work resolves this indeterminacy by linearizing the logarithmic cross ratio, thereby providing an analytic--geometric model in which both distance and angle are consistently defined even when the absolute figure is a parabola.

\begin{figure}[H]
  \centering
  \begin{minipage}{0.30\textwidth}
    \centering
    \includegraphics[width=\linewidth]{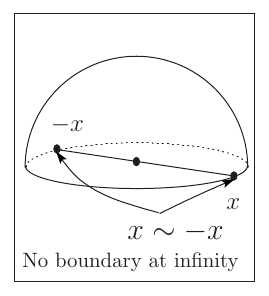}
    \subcaption{}
    \label{fig:elliptic-ivt}
  \end{minipage}
  \begin{minipage}{0.30\textwidth}
    \centering
    \includegraphics[width=\linewidth]{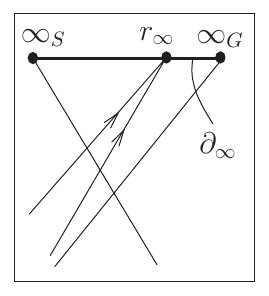}
    \subcaption{}
    \label{fig:diff-ang-ivt}
  \end{minipage}
  \begin{minipage}{0.30\textwidth}
    \centering
    \includegraphics[width=\linewidth]{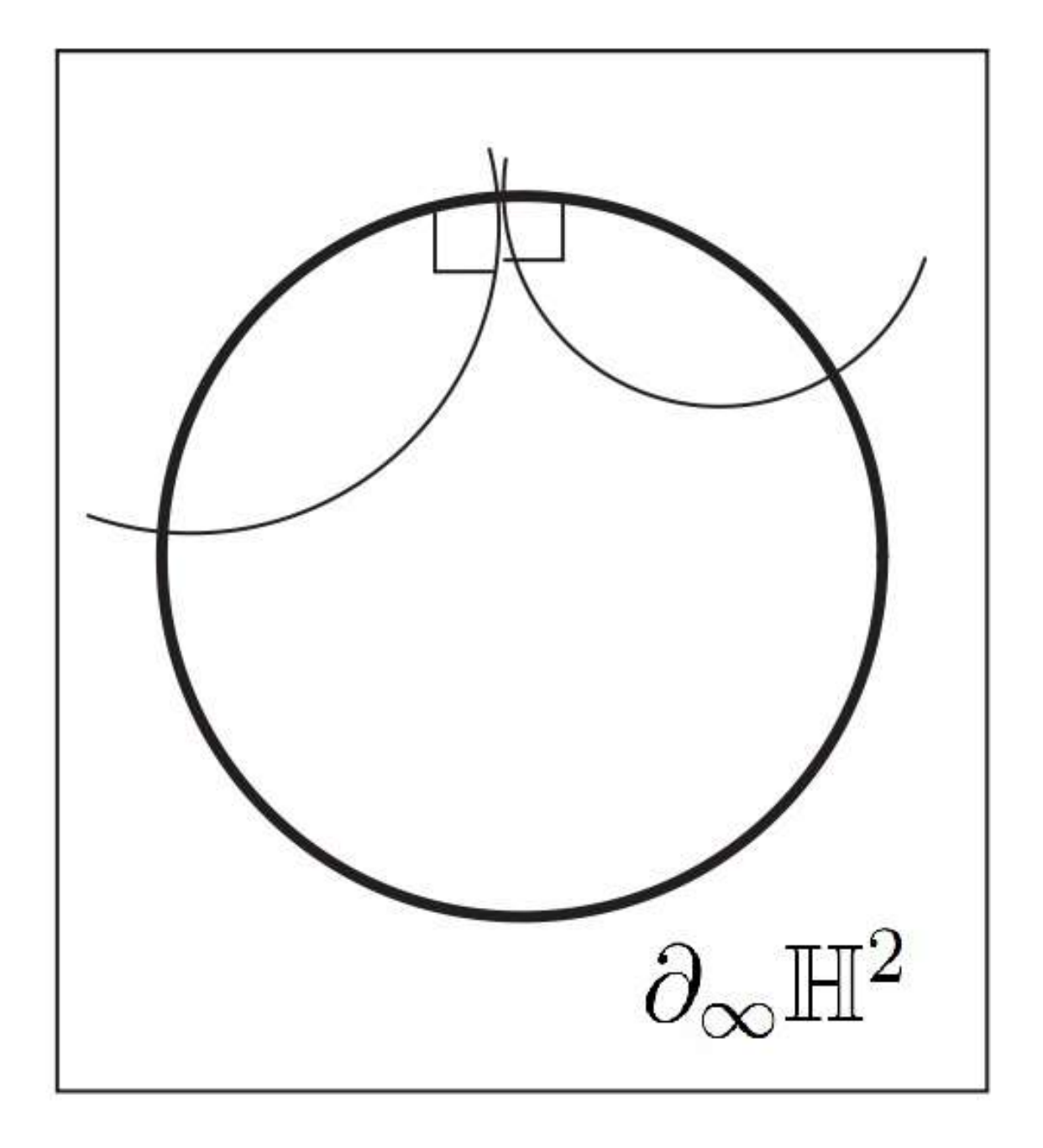}
    \subcaption{}
    \label{fig:hyperbolic-ivt}
  \end{minipage}
  \caption{(a) No real ideal points in elliptic geometry.
  (b) In DA geometry, $\ell_{\infty}$ splits into $\infty_G$ and $\infty_S$.
  (c) In hyperbolic geometry, they coincide, forming a compact ideal boundary.}
\end{figure}


\section{Difference--Angle Focus and Parabolic Loci}

\begin{quote}
In DA geometry, the focus of a parabola is not characterized by a Euclidean distance condition,
but rather by relations among difference angles.
In this section, we derive a fundamental equation describing the difference--angle correspondence
between the focus and the directrix, and show that this equation uniquely determines a parabolic locus.
This notion of focus plays a crucial role in characterizing the parabolic power theorem,
which served as the original motivation for DA geometry.
\end{quote}

\begin{figure}[htbp]
  \centering
  \begin{subfigure}{0.48\textwidth}
    \centering
    \includegraphics[scale=0.8]{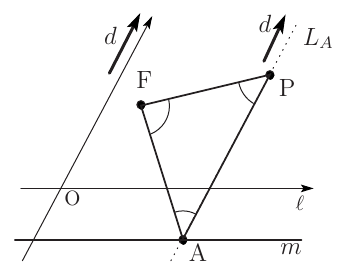}
    \subcaption{}
   \label{fig:parabola-focus}
  \end{subfigure}
  \hfill
  \begin{subfigure}{0.48\textwidth}
    \centering
    \includegraphics[scale=0.8]{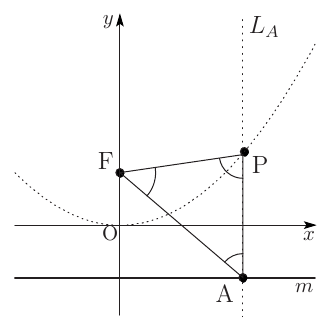}
    \subcaption{} 
   \label{fig:parabola-focus-normal}
  \end{subfigure}
  \caption{(a) General configuration of the difference--angle focus theorem.
  (b) Normalized configuration used in the proof, with $A=(a,-q)$, $F=(0,q)$, and the parabola
  $y=x^{2}/(4q)$ (dotted).}
\end{figure}

For a point $P=(x,y)$ on a parabola, it is required to rewrite the defining relation in terms of
the difference angle $\Pangle$.
On the other hand, since the focus lies on a singular line passing through the vertex of the parabola,
expressing the parabola $y=x^{2}$ in terms of the difference angle $\Pangle$ requires rewriting its defining equation
using only slopes of lines, that is, linear expressions.
However, $y=x^{2}$ is a quadratic equation, and its degree does not match that of the difference angle,
which is defined as a difference of slopes.

To resolve this mismatch, we first abstract the $y$--coordinate of the focus and rewrite the equation
of the parabola in the form
\[
  4qy = x^{2}.
\]
In this form, both sides are expressed as products of linear terms, making it possible to represent
the equation purely in terms of difference angles.

\begin{definition}[Difference--Angle Focal Equation]\label{def:diff-focal-eq}
Let $(\ell,d)$ be a projective reference structure on the plane, and let $F$ be a finite point.
For an arbitrary point $A$, denote by $L_A$ the singular line through $A$ (parallel to the direction $d$).
For $P\in L_A$, define
\begin{equation}\label{eq:diff-focal-function}
\mathcal{F}_A(P)
=1-\Pangle FAP\,
   \bigl(\Pangle APF-\Pangle AFP\bigr),
\end{equation}
and call $\mathcal{F}_A(P)$ the \emph{difference--angle focal function}.
In particular, when
\begin{equation}\label{eq:diff-focal-eqn}
\mathcal{F}_A(P)=0,
\end{equation}
we call this equation the \emph{difference--angle focal equation}.
\end{definition}

\begin{proposition}[Uniqueness of the Solution of the Focal Equation]\label{prop:uni-exist-diff-parabola-focus}
Under \Cref{def:diff-focal-eq}, for any given point $A$,
if $F\notin L_A$, then there exists a unique point $P\in L_A$ satisfying
\eqref{eq:diff-focal-eqn}.
\end{proposition}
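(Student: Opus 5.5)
The plan is to reduce the focal equation to a single affine-linear equation in one real parameter, namely the position of $P$ along the singular line $L_A$. First I normalize coordinates. Using the invariance of difference angles under the admissible transformations of DA geometry (from Base~1), I take the singular direction $d$ to be vertical, so that $L_A$ is the line $x=a$, and I place $F=(0,q)$ and $A=(a,-q)$ as in \Cref{fig:parabola-focus-normal}. Points of $L_A$ are then $P=(a,y)$ with $y\in\R$. The hypothesis $F\notin L_A$ becomes $a\neq 0$. This is exactly the condition under which the slopes of $AF$ and $FP$ are finite, so every difference angle in \eqref{eq:diff-focal-function} is defined.

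Next I compute the three difference angles as functions of $y$, using the convention from Base~1 for a vertex angle one of whose sides is singular: such an angle equals, up to the orientation sign fixed there, the slope of the non-singular side.

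\begin{itemize}
\item $\Pangle FAP$ depends only on the line $AF$, which has slope $-2q/a$. It is therefore a constant $c$, independent of $P$.
\item $\Pangle APF$ is governed by the slope $(y-q)/a$ of $PF$. With the sign convention it is $(q-y)/a$.
\item $\Pangle AFP$ is the difference of the slopes of $FP$ and $FA$, which is $(y-q)/a+2q/a=(y+q)/a$.
\end{itemize}

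Hence $\Pangle APF-\Pangle AFP=-2y/a$, and
\[
\mathcal F_A(P)=1-c\cdot\Bigl(-\frac{2y}{a}\Bigr)=1-\frac{4q}{a^{2}}\,y .
\]
This is affine in $y$ with slope $-4q/a^{2}$. Provided this slope is nonzero, the equation $\mathcal F_A(P)=0$ has exactly one root, $y=a^{2}/(4q)$, which gives both existence and uniqueness. As a consistency check, this root is the point of $L_A$ on the parabola $4qy=x^{2}$, as the surrounding discussion anticipates.

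The main obstacle is twofold.

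\begin{itemize}
\item \emph{Sign and orientation conventions.} The conventions for difference angles at vertices with a singular side must be pinned down exactly as in Base~1. A different sign choice for $\Pangle APF$ would make the $y$-terms cancel and destroy the linearity argument. I would therefore first re-derive these three angles directly from the Base~1 definitions, possibly via the DA angle-sum relation for $\triangle_{\mathcal P}AFP$, before trusting the computation.
\item \emph{Nondegeneracy of the slope.} I must confirm that the coefficient $-4q/a^{2}$ cannot vanish, i.e.\ that $\Pangle FAP\neq 0$. In the normalization above this amounts to $q\neq0$, meaning $AF$ is not parallel to the reference line $\ell$. I would check whether the normalization $A=(a,-q)$, $F=(0,q)$ is always attainable with $q\neq 0$. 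If it is not, this degenerate case must either be excluded via the setup of \Cref{def:diff-focal-eq} or handled separately, since in that case $\mathcal F_A\equiv 1$.
\end{itemize}
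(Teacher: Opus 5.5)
Your proposal is the paper's proof. It uses the same normalization $F=(0,q)$, $A=(a,-q)$, $P=(a,y)$, the same three angle values, and the same affine equation $1-4qy/a^{2}=0$; treating $a>0$ and $a<0$ together, as you do, is fine. The two points you leave open are genuine, and the paper passes over both.

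On nondegeneracy: the normalization is always reachable by translations, but it gives $q=(y_F-y_A)/2$. This vanishes exactly when $AF\parallel\ell$, and then $\Pangle FAP=-2q/a=0$, so $\mathcal F_A\equiv 1$ on $L_A$ and no point satisfies \eqref{eq:diff-focal-eqn}. The proposition as stated is therefore false in that case. It needs the extra hypothesis that $A$ and $F$ do not lie on a common line parallel to $\ell$, which is automatic in \Cref{mthm:diff-parabola-focus} provided $F\notin m$. The paper uses $q\neq 0$ tacitly when it solves $4qy=a^{2}$.

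On signs: your values agree with the paper's, but they do not come from a single orientation rule. With the singular side given slope $0$, as in the paper, $\Pangle FAP=\Slp{AF}-\Slp{AP}$ and $\Pangle APF=\Slp{PA}-\Slp{PF}$, whereas $\Pangle AFP=\Slp{FP}-\Slp{FA}$ is taken the other way round. So the angle-sum check you propose would refute rather than confirm these values. If the angles satisfy \Cref{ax:A1} and $\Pangle FAP+\Pangle PFA+\Pangle APF=0$, then $\Pangle APF-\Pangle AFP=-\Pangle FAP$, hence $\mathcal F_A(P)=1+(\Pangle FAP)^{2}\ge 1$ for every $P\in L_A$. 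Under a uniform rule, the expression that yields $1-4qy/a^{2}$ is $1-\Pangle FAP\,(\Pangle APF+\Pangle AFP)$. Your argument, like the paper's, is therefore valid only relative to the paper's specific mixed sign assignment for angles with a singular side. You should state that dependence explicitly rather than try to derive it from the angle sum.
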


\begin{proof}
We normalize the configuration by setting $F=(0,q)$, $A=(a,-q)$, and $P=(a,y)$
(the degenerate case $a=0$ has been treated separately).
We proceed by cases.

\medskip
\noindent\textit{Case $a>0$.}
In this case, the segment $AP$ is parallel to the direction $d$, hence $\Slp{AP}=0$.
Moreover,
\[
  \Slp{FA}=-\frac{2q}{a}\ (<0),\qquad
  \Slp{FP}=\frac{y-q}{a}
  \ \begin{cases}
    >0 & (y>-q),\\[2pt]
    <0 & (y<-q).
  \end{cases}
\]
By the definition of the difference angle, we obtain
\[
  \Pangle APF=-\frac{y-q}{a},\qquad
  \Pangle AFP=\frac{y+q}{a},\qquad
  \Pangle FAP=-\frac{2q}{a}.
\]
Substituting these expressions into $\mathcal{F}_A(P)=0$, that is,
\(
1-\Pangle FAP\,
      (\Pangle APF-\Pangle AFP)=0,
\)
we obtain
\[
  1+\frac{2q}{a}\Bigl(-\frac{y-q}{a}-\frac{y+q}{a}\Bigr)=0
  \iff 1-\frac{4qy}{a^2}=0
  \iff 4qy=a^{2}.
\]
Writing $x=a$, this yields $4qy=x^{2}$.
Although the sign of $\Slp{FP}$ changes according to whether $y>-q$ or $y<-q$,
the resulting equation is identical in both cases.

\medskip
\noindent\textit{Case $a<0$.}
A completely analogous computation yields the same equation $4qy=a^{2}$,
since the sign differences cancel out after clearing denominators.
\end{proof}

\begin{maintheorem}[Focal Equation and Parabola]\label{mthm:diff-parabola-focus}
Let $(\ell,d)$ be a projective reference structure on the plane,
and let $F$ be a finite point.
Fix a line $m$ parallel to $\ell$, and for each point $A\in m$,
let $P\in L_A$ satisfy \eqref{eq:diff-focal-eqn}.
Then, as $A$ varies along $m$, the locus of $P$ is a parabola whose axis is parallel to the direction $d$.
\end{maintheorem}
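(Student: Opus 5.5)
We work in coordinates adapted to the projective reference structure $(\ell,d)$, in which the direction $d$ is vertical and $\ell$ is horizontal, as in the normalized configuration of the proof of \Cref{prop:uni-exist-diff-parabola-focus}. The map $(x,y)\mapsto(x-x_F,\,y-y_F)$ is a translation. It preserves slopes, hence all difference angles, and it sends singular lines to singular lines and lines parallel to $\ell$ to lines parallel to $\ell$. We may therefore assume $F=(0,q)$. The fixed line $m$ is parallel to $\ell$, so it has the form $y=c$. For the normalization $A=(a,-q)$ used in the proposition we want $c=-q$. Since $q$ is so far only the translated coordinate of $F$, we instead translate vertically so that $F$ and $m$ become symmetric about the $x$-axis, i.e.\ $F=(0,q)$ and $m:\ y=-q$ with $2q=y_F-c$. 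We assume $F\notin m$, so that $q\neq 0$.

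For each $A=(a,-q)\in m$ with $a\neq 0$ we have $F\notin L_A$. \Cref{prop:uni-exist-diff-parabola-focus} then gives a unique $P\in L_A$ solving \eqref{eq:diff-focal-eqn}, and its proof shows that $P=(a,y)$ satisfies $4qy=a^2$. This holds in both cases $a>0$ and $a<0$, and independently of whether $y>-q$ or $y<-q$. So every locus point has the form $(a,\,a^2/(4q))$.

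Conversely, as $A$ runs over $m$, the abscissa $a$ runs over all reals (modulo the degenerate value $a=0$). The unique solution traces out the full curve $y=x^2/(4q)$. For $a=0$ we either use the separately treated degenerate case or close the locus by continuity, yielding the vertex $(0,0)$. Returning to the original coordinates, the locus is
\[
y-\tfrac{y_F+c}{2}=\frac{(x-x_F)^2}{2\,(y_F-c)},
\]
a parabola whose axis is the singular line $x=x_F$ through $F$, hence parallel to $d$.

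The main point requiring care is the normalization step. We must check that translations preserve the difference angle (immediate, since slopes are preserved) and that the vertical placement can be chosen so that $m$ sits at $y=-q$ while $F$ sits at $(0,q)$; this requires $F\notin m$. We must also handle the value $a=0$, where $F\in L_A$ and the proposition does not apply. There we would invoke the separately treated degenerate case, or simply define the locus point as the vertex by continuity.
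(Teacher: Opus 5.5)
Your proposal is correct and takes essentially the same route as the paper. You normalize by a slope-preserving translation so that $F=(0,q)$ and $m$ is $y=-q$, read off $4qy=a^2$ from the computation in the proof of the uniqueness proposition, and let $a$ range over $\mathbb{R}$. You are in fact more careful than the paper at two points: it tacitly needs $F\notin m$, and it asserts that $x$ ranges over all reals even though $F\in L_A$ at $a=0$, so strictly the vertex is recovered only through your degenerate-case or continuity convention.
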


\begin{proof}
By \Cref{prop:uni-exist-diff-parabola-focus}, for each point $A$
there exists a unique point $P=(x,y)\in L_A$ satisfying
\[
  4qy = x^{2}.
\]
As $A$ varies along $m$, the value $x$ ranges over all real numbers.
Hence the locus of $P$ coincides with the curve defined by
\[
  4qy = x^{2}.
\]
This curve is precisely the parabola with focus $(0,q)$ and directrix $y=-q$,
and its axis is parallel to the direction $d$.
\end{proof}

\begin{remark}
Conversely, if $P$ satisfies $y=\dfrac{x^{2}}{4q}$, then reversing the above chain of equivalences
immediately yields \eqref{eq:diff-focal-eqn}.
Therefore, \eqref{eq:diff-focal-eqn} is equivalent to the parabolic condition.
\end{remark}

\begin{remark}[Normalization by Vertical Translation]
In the above proof, we have assumed $A=(a,-q)$ and $F=(0,q)$ for convenience.
However, this normalization can always be achieved by a translation in the direction $d$,
namely $(x,y)\mapsto(x,y+t)$.
Since such translations preserve slopes, they do not affect the invariance of
\eqref{eq:diff-focal-eqn}.
Therefore, in the general case $A=(a,q')$ and $F=(0,q)$,
the conclusion remains unchanged, with only a vertical shift of the vertex of the parabola.
\end{remark}


\section{Parabolic Power and Its Associated Structures}

\begin{quote}
In this section, we investigate the parabolic power---the driving force behind DA geometry---and the surrounding structures.
The parabolic power theorem\footnote{The fact itself was communicated to the author, but the original discoverer is unknown.}
was already treated in Nakazato~\cite{Nakazato2025Base1}; nevertheless, we restate it here.
The reason is that, although the theorem holds in a form analogous to that in Euclidean geometry,
the mechanism determining its value is essentially different.
Indeed, in Euclidean geometry the power of a point is determined by the radius of the circle and the distance from the point to the center,
whereas in the parabolic setting no such relation is available.

For a circle, the center $O$ plays a privileged role in determining the power,
while for a parabola the corresponding privileged point is the focus $F$.
Accordingly, by replacing the Euclidean ``center'' viewpoint with the focus, we reexamine the value of the \emph{parabolic power} in DA geometry.

We first derive the parabolic power theorem as a basic result based on AA-similarity,
and then study the mechanism determining its value, the interior/exterior classification,
and related auxiliary structures.
\end{quote}

\begin{figure}[htbp]
  \centering
  \begin{minipage}{0.32\textwidth}
    \centering
    \includegraphics[width=\linewidth]{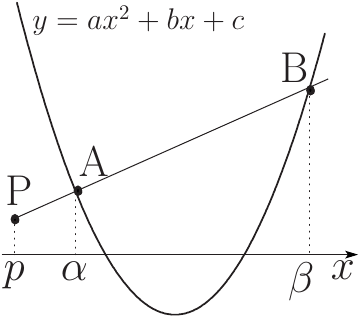}
    \subcaption{}
    \label{fig:Parabolic-power-ext}
  \end{minipage}
  \begin{minipage}{0.32\textwidth}
    \centering
    \includegraphics[width=\linewidth]{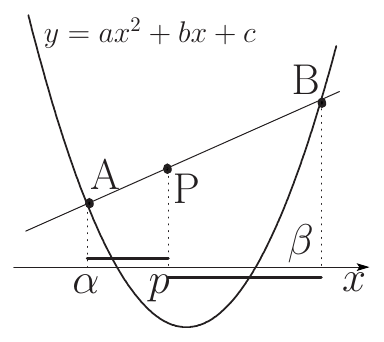}
    \subcaption{}
    \label{fig:Parabolic-power-int}
  \end{minipage}
  \begin{minipage}{0.32\textwidth}
    \centering
    \includegraphics[width=\linewidth]{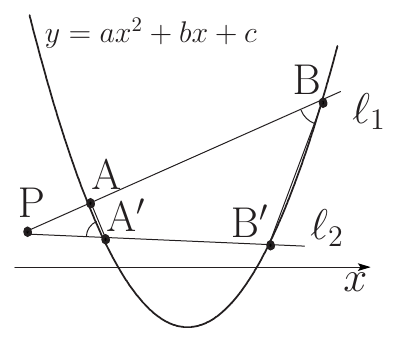}
    \subcaption{}
    \label{fig:Parabolic-power-proof}
  \end{minipage}
 \caption{
(a) A point $P$ on the side not containing the focus.
(b) A point $P$ on the side containing the focus.
(c) Configuration for the proof, showing the two secants $\ell_1,\ell_2$ and points $A,B,A',B'$.
}
\end{figure}

\begin{theorem}[Parabolic Power Theorem]\label{thm:parabolic-power}
Let $\CMC$ be a parabola and let $P$ be a point.
For a line $\ell$ through $P$ that meets $\CMC$ at two points $A,B$, the value of
$|PA|_{\mathcal{P}}\,|PB|_{\mathcal{P}}$ is independent of the choice of $\ell$.
\end{theorem}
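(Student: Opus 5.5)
The plan is to prove the statement by a direct coordinate computation rather than by a synthetic AA-similarity argument. The key fact I will use from Base~1 is the difference--angle norm: for a segment $XY$, $|XY|_{\mathcal{P}}$ depends only on the separation of the endpoints transverse to the singular direction $d$. After choosing coordinates with $d$ vertical, this means
\[
|XY|_{\mathcal{P}}=|x_Y-x_X|.
\]
If Base~1 normalizes the norm with an extra constant factor, that factor appears squared on both sides of the identity below and does not affect independence from $\ell$.

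\textbf{Normalization.} By \Cref{mthm:diff-parabola-focus}, a parabola has its axis parallel to $d$, so I can write $\CMC:\ y=kx^{2}+bx+c$ with $k\neq 0$. Let $P=(p,s)$. A line through $P$ that meets $\CMC$ in two points cannot be the singular line $L_P$, since a vertical line meets $\CMC$ only once. Hence $\ell$ has a finite slope $m$ and equation $y=m(x-p)+s$.

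\textbf{Key computation.} The $x$--coordinates of $A$ and $B$ are the two roots of
\[
g(x)=kx^{2}+(b-m)x+(c+mp-s)=0 .
\]
Because $g(x)=k(x-x_A)(x-x_B)$, evaluating at $x=p$ gives $g(p)=k(p-x_A)(p-x_B)$. The slope terms cancel in $g(p)$:
\[
g(p)=kp^{2}+bp-mp+c+mp-s=kp^{2}+bp+c-s .
\]
Therefore
\[
|PA|_{\mathcal{P}}\,|PB|_{\mathcal{P}}=\frac{|kp^{2}+bp+c-s|}{|k|},
\]
which contains no $m$. This proves independence of $\ell$. The sign of the unsigned product $(x_A-p)(x_B-p)$ additionally distinguishes whether $P$ lies on the focal side of $\CMC$ or not, matching the interior/exterior picture in the figures.

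\textbf{Expected obstacle.} The only delicate point is making sure the Base~1 norm really reduces to the horizontal separation, and handling degenerate lines. A line tangent to $\CMC$ gives $A=B$, and the same formula still holds. The singular line through $P$ must be excluded, as noted above. As a synthetic cross-check, one could take two secants $\ell_1,\ell_2$ through $P$ with intersection points $A,B$ and $A',B'$, and show $\triangle_{\mathcal{P}}PAA'\sim\triangle_{\mathcal{P}}PB'B$ by AA-similarity of difference angles. The required angle equality is the DA analogue of inscribed angles, namely that the chord slopes satisfy $\Slp{AA'}+\Slp{BB'}=\Slp{AB'}+\Slp{A'B}$. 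The coordinate proof avoids needing that lemma.
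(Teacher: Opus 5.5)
Your argument is correct, but it follows a different route from the paper's proof of this theorem. The paper uses the synthetic argument you mention only as a cross-check. For two secants through $P$, it invokes the constancy of the parabolic inscribed angle to get $\Pangle PAA'=\Pangle PB'B$. It then concludes $\Ptri PAA'\sim_{\mathcal P}\Ptri PB'B$ by AA-similarity and reads off $|PA|_{\mathcal P}|PB|_{\mathcal P}=|PA'|_{\mathcal P}|PB'|_{\mathcal P}$ from proportional sides. This is the Euclidean power-of-a-point proof transplanted. Your computation---evaluating $g(x)=k(x-x_A)(x-x_B)$ at $x=p$ and watching the slope $m$ cancel---is what the paper does later in \Cref{mthm:parabolic-power-main}. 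There, normalized to $y=x^2$, it shows $(a-u)(b-u)=u^2-v$.

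The synthetic proof keeps the result inside the angle-first viewpoint and explains it through the inscribed-angle and similarity structure. However, it depends on Base~1 facts: that DA similarity forces proportional DA norms, and the inscribed-angle lemma. It also gives no value for the power. Your proof is self-contained and handles tangent lines uniformly as a double root. It produces the explicit value $|kp^2+bp+c-s|/|k|$ together with the sign classification. So it already delivers the secant-independence and sign parts of \Cref{mthm:parabolic-power-main}, though not its focal expression $|PF|_{\mathcal P}^2\,\mathcal F_Q(P)$. The price is reliance on the coordinate formula $|XY|_{\mathcal P}=|x_Y-x_X|$, which the paper itself uses in exactly this form.

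One justification needs fixing. \Cref{mthm:diff-parabola-focus} says that loci of the focal equation are parabolas with axis parallel to $d$; it does not say every parabola has this property. Axis parallel to $d$ is the standing convention for DA parabolas, and the paper's inscribed-angle step needs it too. It is also genuinely necessary. For $x=y^2$ with $d$ vertical, the line $x=p+n(y-s)$ through $P=(p,s)$ gives $(x_A-p)(x_B-p)=n^2(s^2-p)$, which varies with $n$. So state it as a hypothesis rather than deducing it. Also, in your sign remark you mean the \emph{signed} product $(x_A-p)(x_B-p)=g(p)/k$, not the unsigned one.
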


\begin{proof}
We show that the values determined by two distinct secants coincide.

Let $A,B$ be the intersection points of $\ell_1$ with $\CMC$, and let $A',B'$ be those of $\ell_2$ with $\CMC$.
The angle $\Pangle APA'$ is common. Independently of the positions of $A',B'$,
the constancy of the parabolic inscribed angle (equivalently, a property of parabolic cyclic quadrilaterals) yields
\[
\Pangle PAA'=\Pangle PB'B.
\]
Hence
\[
\Ptri PAA' \sim_{\mathcal{P}} \Ptri PB'B.
\]
Therefore, the product of the corresponding side ratios agrees, and we obtain
\[
|PA|_{\mathcal{P}} : |PA'|_{\mathcal{P}}=|PB'|_{\mathcal{P}} : |PB|_{\mathcal{P}}
\iff |PA|_{\mathcal{P}} |PB|_{\mathcal{P}}=|PA'|_{\mathcal{P}} |PB'|_{\mathcal{P}}.
\]
This proves the claim.
\end{proof}

Although the parabolic power theorem has an extremely elementary nature, the value of the power
cannot be read off transparently from purely geometric properties.
For comparison, for a circle $C$ with center $O$ and radius $r$,
the Euclidean power $\Pi(P;C)$ of a point $P$ satisfies, writing $OP=d$,
\[
  \Pi(P;C)=d^{2}-r^{2}.
\]
To emphasize its correspondence with a slope-based structure, we rewrite it in a dimensionless form as
\[
  \Pi(P;C)
  =d^{2}\left(1-\left(\frac{r}{d}\right)^{2}\right)
  \quad (d\neq 0).
\]

Now fix a projective reference structure $(\ell,d)$.
Applying the difference--angle focal equation
\[
  \mathcal F_A(P)
  =1-\Pangle FAP\,
     \bigl(\Pangle APF-\Pangle AFP\bigr)=0,
\]
we denote by $\CMC$ the parabola determined by the focus $F$ and the directrix $m$.

In what follows, we define the power with respect to this parabola,
prove its constancy, and show that the sign of the power determines the position of $P$.
For this purpose, we first specify the interior and exterior of a parabola.

\begin{figure}[htbp]
  \centering
  \begin{minipage}{0.32\textwidth}
    \centering
    \includegraphics[width=\linewidth]{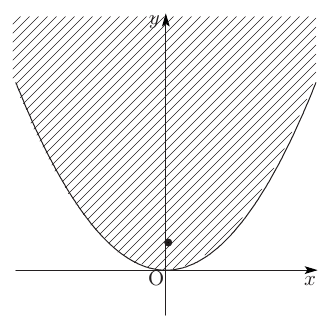}
    \subcaption{}
    \label{fig:focal-side}
  \end{minipage}
  \begin{minipage}{0.32\textwidth}
    \centering
    \includegraphics[width=\linewidth]{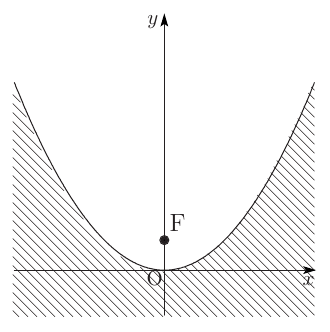}
    \subcaption{}
    \label{fig:non-focal-side}
  \end{minipage}
 \caption{
(a) The focal side of the parabola (shaded).
(b) The non-focal side of the parabola (shaded).
}
\end{figure}

\begin{definition}[Interior and Exterior of a Parabola]\label{def:int-ext-parabola}
Let $d$ be the axis direction of a parabola $\CMC$.
We call the side containing the focus the \emph{interior} of $\CMC$, and the opposite side the \emph{exterior}.
We write
\[
  P\in\Int(\CMC)\quad\text{or}\quad P\in\Ext(\CMC).
\]
\end{definition}

\begin{remark}
This definition distinguishes the interior and exterior of a parabola by the sign of the parabolic power.
Its structure agrees with Klein's notion of the interior and exterior of an absolute conic (fundamental conic);
see Klein~\cite{Klein1871}.
\end{remark}

In what follows, for points $P$ and $X$ we use the $x$--coordinate difference
$s_P(X):=x_X-x_P$.

\begin{maintheorem}[Main Theorem on Parabolic Power]\label{mthm:parabolic-power-main}
For a point $P$ and a parabola $\mathcal C$, define the \emph{parabolic power} by
\[
   \Pi_{\mathcal P}(P;\mathcal C)
  := \dnorm{PF}^{2}\,\mathcal F_{Q}(P).
\]
Then, for any secant $\ell$ through $P$ with intersection points $A,B$ with $\mathcal C$, we have
\[
  \Pi_{\mathcal P}(P;\mathcal C)
   = s_P(A)s_P(B).
\]
Moreover, the sign of this value classifies the position of $P$ as
\[
    \begin{cases}
      >0 &\Longleftrightarrow P\in\Ext(\mathcal C),\\
      =0 &\Longleftrightarrow P\in\mathcal C,\\
      <0 &\Longleftrightarrow P\in\Int(\mathcal C).
    \end{cases}
\]
In particular, $\Pi_{\mathcal P}(P;\mathcal C)$ is independent of the choice of the secant $\ell$,
and we call it the \emph{parabolic power of $\mathcal C$ at $P$}.
When no confusion arises, we abbreviate $\Pi_{\mathcal P}(P):=\Pi_{\mathcal P}(P;\mathcal C)$.
\end{maintheorem}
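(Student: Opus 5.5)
The plan is to reduce everything to the normalized coordinates already used in the proof of \Cref{prop:uni-exist-diff-parabola-focus}, and then carry out one explicit computation. By the remark on normalization by vertical translation, and since a horizontal translation also preserves slopes and $x$--differences, we may assume $F=(0,q)$ with $q\neq 0$, directrix $m:\ y=-q$, and $\mathcal C:\ 4qy=x^{2}$. Let $P=(p,r)$ be arbitrary. I read $Q$ as the point of the directrix $m$ on the singular line $L_P$, that is, $Q=(p,-q)$; with this reading, $\mathcal F_Q(P)$ is exactly the function computed in \Cref{prop:uni-exist-diff-parabola-focus} with $a=p$ and $y=r$. That computation gives
\[
\mathcal F_Q(P)=1-\frac{4qr}{p^{2}}\qquad(p\neq 0).
\]
I will use the Base~1 difference--angle norm, $\dnorm{PF}=|x_F-x_P|=|p|$. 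Multiplying, the left-hand side becomes
\[
\Pi_{\mathcal P}(P;\mathcal C)=p^{2}-4qr.
\]

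Next I treat the secant side. A secant through $P$ meeting $\mathcal C$ in two points cannot be singular, since singular lines meet $\mathcal C$ only once. So it has the form $y-r=k(x-p)$. Substituting $x=p+s$ into $4qy=x^{2}$ gives
\[
s^{2}+(2p-4qk)s+(p^{2}-4qr)=0.
\]
The roots of this quadratic are exactly $s_P(A)$ and $s_P(B)$. By Vieta, $s_P(A)s_P(B)=p^{2}-4qr$, which does not depend on $k$. This proves the identity $\Pi_{\mathcal P}(P;\mathcal C)=s_P(A)s_P(B)$, and hence independence of the secant; it is consistent with \Cref{thm:parabolic-power}. Tangents correspond to the double-root case and give the same value.

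For the sign classification, note that $p^{2}-4qr=4q\bigl(p^{2}/(4q)-r\bigr)$. If $q>0$, the focus lies above the curve, so the interior is $r>p^{2}/(4q)$. This gives power $<0$ inside, $=0$ exactly on $\mathcal C$, and $>0$ outside. If $q<0$, both the factor $4q$ and the inside/outside inequality flip, so the same trichotomy holds. Alternatively, the case $q<0$ reduces to $q>0$ by the reflection $(x,y)\mapsto(x,-y)$. This reflection negates slopes, so it changes no products of differences of $x$--coordinates and it swaps $q\leftrightarrow -q$ consistently.

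The main obstacle is the degenerate case $p=0$, where $P$ lies on the axis. There $\mathcal F_Q(P)$ involves division by $p$, while $\dnorm{PF}=0$, so the defining product is formally $0\cdot\infty$. I would first handle this the way the paper treats $a=0$ separately. The plan is to interpret $\dnorm{PF}^{2}\mathcal F_Q(P)$ as the cleared-denominator expression $p^{2}-\dnorm{PF}^{2}\,\Pangle FQP\,(\Pangle QPF-\Pangle QFP)$, which equals the polynomial $p^{2}-4qr$. Equivalently, it is the continuous extension of $p^{2}-4qr$ to $p=0$, giving $-4qr$. The Vieta computation above is already valid at $p=0$, so the identity and the sign classification extend verbatim, and the only care needed is to state this convention explicitly.
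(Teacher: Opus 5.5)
Your proposal is correct and follows essentially the paper's route. You normalize, read off $\mathcal F_Q(P)$ from the focal computation, multiply by $\dnorm{PF}^2$ to get $p^2-4qr$ (the paper's $u^2-v$ at $q=\tfrac14$), and identify this with $s_P(A)s_P(B)$ by factoring the parabola along the secant; your Vieta step is the paper's identity $x^2=(x-a)(x-b)+(a+b)x-ab$ evaluated at $x=u$. Your version is slightly more general, since it allows arbitrary $q$ of either sign, and it is more careful: the paper's proof divides by $u^2$ without comment and never treats points on the axis $x_P=0$, where your explicit continuous-extension convention is genuinely needed.
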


\begin{proof}
We normalize to $\mathcal C:\ y=x^{2}$, and write $P=(u,v)$, with intersection points
$A=(a,a^{2})$ and $B=(b,b^{2})$.

As in the normalization used in the proof of \cref{prop:uni-exist-diff-parabola-focus},
we take the focus $F=(0,\frac14)$ and the point $Q=(u,-\frac14)$ on the directrix aligned with $P$.
Under this setup, a direct computation from the definition of the difference angle yields
\[
  \mathcal{F}_{Q}(P)=1-\frac{v}{u^{2}}.
\]
By the definition of the difference--angle norm in the normalized setting,
we have $\dnorm{PF}^{2}=(x_P-x_F)^2=u^2$, and hence
\[
 \Pi_{\mathcal P}(P;\mathcal C)
  = \dnorm{PF}^{2}\,\mathcal F_{Q}(P)
  =u^2\left(1-\frac{v}{u^2}\right)=u^2-v.
\]
Since $s_P(A)=x_A-x_P=a-u$ and $s_P(B)=x_B-x_P=b-u$, it suffices to show that
\[
  s_P(A)s_P(B) = (a-u)(b-u).
\]

Using the parabola identity
\[
  x^{2}=(x-a)(x-b)+(a+b)x-ab,
\]
and substituting $x=u$, we obtain
\[
  u^{2}=(u-a)(u-b)+(a+b)u-ab.
\]
On the other hand, the line $AB$ has equation $y=(a+b)x-ab$.
Since $P=(u,v)$ lies on $AB$, we have
\[
  v=(a+b)u-ab.
\]
Comparing these two equations yields
\[
  (a-u)(b-u)=u^{2}-v.
\]
Therefore
\[
  s_P(A)s_P(B)=u^{2}-v,
\]
and hence
\[
  \Pi_{\mathcal P}(P;\mathcal C)
  =s_P(A)s_P(B).
\]

\medskip
Finally, the sign of $s_P(A)s_P(B)=u^{2}-v$ agrees with whether $P$ lies in the
exterior ($u^{2}-v>0$), on the parabola ($u^{2}-v=0$), or in the interior ($u^{2}-v<0$)
of $y=x^{2}$.
This completes the proof.
\end{proof}

\begin{remark}
For the general parabola $y=\kappa x^2$, we obtain
\[
  \Pi_{\mathcal P}(P)=u^2-\frac{v}{\kappa},
\]
but in this section we work with the normalization $\kappa=1$.
\end{remark}

\begin{remark}[General Requirements for a ``Power Theorem'']
The conditions one may reasonably require in order to call a statement a ``power theorem''
can be summarized as follows:
\begin{enumerate}
\item For any curve $C$ and a fixed point $P$, there exists a line $L$ through $P$ meeting $C$
      in two points $A,B$ (including the tangent case).
\item For $P,A,B$, a quantity is defined by a common rule, and the product of the two values
      (called the ``power'') is independent of the choice of $L$.
\item The sign of the product classifies the position of $P$ as lying inside $C$, outside $C$,
      or on $C$.
\end{enumerate}
The Euclidean power of a point and the parabolic power in the present framework are among the simplest examples satisfying these three requirements.
Within a similar framework, one can also define a \emph{hyperbolic power}.
A systematic study of the geometric structure based on the latter is expected to be a key step
toward clarifying the connection between DA geometry and hyperbolic geometry.
\end{remark}

One of the classical consequences of a power theorem is the existence of the radical axis and the radical center.
Given two figures, the locus of points whose powers with respect to the two figures are equal is called the radical axis.
For three figures in a suitable mutual position, if the three pairwise radical axes are concurrent,
their intersection point is called the radical center; a typical situation is when each pair of figures intersects.

\begin{figure}[htbp]
  \centering
  \begin{minipage}{0.32\textwidth}
    \centering
    \includegraphics[width=\linewidth]{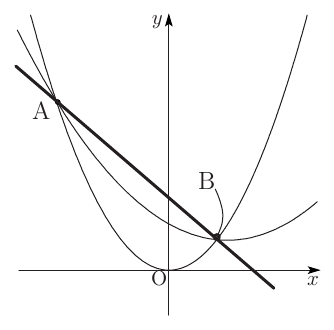}
    \subcaption{}
    \label{fig:radical-axis}
  \end{minipage}
  \begin{minipage}{0.32\textwidth}
    \centering
    \includegraphics[width=\linewidth]{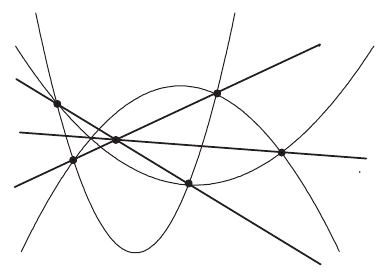}
    \subcaption{}
    \label{fig:radical-center-ppm}
  \end{minipage}
  \begin{minipage}{0.32\textwidth}
    \centering
    \includegraphics[width=\linewidth]{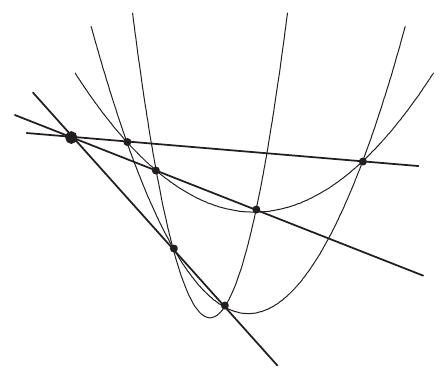}
    \subcaption{}
    \label{fig:radical-center-ppp}
  \end{minipage}
 \caption{
(a) The parabolic radical axis of two parabolas with parallel axes.
(b) The parabolic radical center arising when two quadratic coefficients
    share the same sign and the third has the opposite sign.
(c) The parabolic radical center in the case where all three quadratic
    coefficients have the same sign.
}
\end{figure}

\begin{theorem}[Parabolic Radical Axis]\label{thm:parabolic-radical-axis}
Under a projective reference structure $(\ell,d)$, let $\CMC$ and $\DMC$ be two parabolas whose axes are parallel to $d$,
and assume that they intersect at two distinct points $A,B$.
Then, for any point $P$ on the line $\ell_{AB}$ containing the common chord $AB$, we have
\[
  \Pi_{\mathcal P}(P;\CMC)=\Pi_{\mathcal P}(P;\DMC).
\]
Hence $\ell_{AB}$ is the parabolic radical axis of $\CMC$ and $\DMC$.
\end{theorem}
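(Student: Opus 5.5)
The plan is to apply the Main Theorem on Parabolic Power (\cref{mthm:parabolic-power-main}) twice, once for each parabola, using the common chord line $\ell_{AB}$ as the secant in both cases. Fix $P\in\ell_{AB}$. Since $A,B\in\CMC$, the line $\ell_{AB}$ is a secant of $\CMC$ through $P$ whose two intersection points with $\CMC$ are exactly $A$ and $B$. A line meets a parabola in at most two points, so no other intersection can occur. The main theorem therefore gives
\[
  \Pi_{\mathcal P}(P;\CMC)=s_P(A)\,s_P(B).
\]
For the same reason $\ell_{AB}$ is a secant of $\DMC$ meeting it exactly at $A,B$, so $\Pi_{\mathcal P}(P;\DMC)=s_P(A)\,s_P(B)$. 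The right-hand sides depend only on $P$, $A$, $B$ and not on the parabola, so the two powers coincide.

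The one point needing care is that \cref{mthm:parabolic-power-main} was proved after normalizing to $y=x^2$, whereas here two parabolas with possibly different quadratic coefficients are given simultaneously. To be safe, I would redo the computation directly in common coordinates adapted to $(\ell,d)$. Write $\CMC:\ y=\kappa x^2+\beta x+\gamma$ and $P=(u,v)$ on $\ell_{AB}$. Then for any line through $P$ meeting $\CMC$ at $x$-coordinates $a,b$, the roots of $\kappa x^2+\beta x+\gamma-(mx+c)=0$ show that
\[
  \kappa(u-a)(u-b)=\kappa u^2+\beta u+\gamma-v.
\]
After the focal normalization (the remark giving $\Pi_{\mathcal P}=u^2-v/\kappa$, extended by a translation along $\ell$, which preserves slopes and $x$-differences), the quantity $\dnorm{PF}^2\mathcal F_Q(P)$ equals exactly $(u-a)(u-b)=s_P(A)s_P(B)$, with the factor $\kappa$ divided out.

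Hence the power is intrinsically the product of the $x$-differences, independent of the parabola's coefficients. That is precisely what makes the two powers equal on $\ell_{AB}$. I expect this verification, namely that the definition via $\dnorm{PF}^2\mathcal F_Q(P)$ is normalized uniformly as $s_P(A)s_P(B)$ for every $\kappa$ and not as $\kappa$ times it, to be the main obstacle. I would check it by a direct computation with $F=(0,1/(4\kappa))$ and $Q=(u,-1/(4\kappa))$.

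Finally, the tangent/degenerate issue. If $P$ is one of $A,B$, both powers vanish since $P$ lies on both curves. If $\ell_{AB}$ were parallel to $d$, it would meet each parabola only once, contradicting the existence of two distinct intersection points of $\CMC$ and $\DMC$ on it. This case is therefore excluded, which gives the claim and identifies $\ell_{AB}$ as the radical axis.
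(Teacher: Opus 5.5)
Your proof is correct, and its core idea is the paper's: take the common chord $\ell_{AB}$ as the secant through $P$ for both parabolas and apply the Main Theorem on Parabolic Power. The two routes differ in how the sign is handled.

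The paper equates only absolute values, $|\Pi_{\mathcal P}(P;\CMC)|=|PA|_{\mathcal P}|PB|_{\mathcal P}=|\Pi_{\mathcal P}(P;\DMC)|$. It then recovers the signs separately from the interior/exterior classification: points of the segment $[AB]$ are interior to both parabolas, and the rest of $\ell_{AB}$ is exterior to both. You instead use the signed identity $\Pi_{\mathcal P}(P)=s_P(A)s_P(B)$ directly, so equality is immediate and no sign discussion is needed. In particular, you never need the convexity fact that the open chord lies on the focal side of each parabola. That fact does hold, even when one parabola opens upward and the other downward, but it is the step the paper's sign argument rests on.

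Your explicit check also fills in something the paper's proof leaves implicit when it says ``under the normalization $y=x^2$''. Only one of the two parabolas can be put in that form, so the other needs the general-$\kappa$ formula $\Pi_{\mathcal P}=u^2-v/\kappa$ from the remark after the Main Theorem. You verify that, after translating the vertex to the origin, $\dnorm{PF}^2\mathcal F_Q(P)=u^2-v/\kappa=(u-a)(u-b)$, with no stray factor of $\kappa$. This works because translations preserve slopes and $x$-differences.

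What the paper's version offers in return is a visible geometric reading of the sign along the radical axis: interior between $A$ and $B$, exterior outside.
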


\begin{proof}
For $P=A$ or $P=B$, both powers are $0$ and the claim is trivial.
Assume $P\neq A,B$.
Under the normalization $\mathcal C:\ y=x^2$, we have
\[
  |\Pi_{\mathcal P}(P;\CMC)|
  =|PA|_{\mathcal P}\,|PB|_{\mathcal P}
  =|\Pi_{\mathcal P}(P;\DMC)|
\]
since the chord endpoints $A,B$ coincide for both parabolas.
Moreover, if $P\in[AB]$ then $P\in\Int$ for both parabolas, while if $P\notin[AB]$ then $P\in\Ext$.
By the sign correspondence in \Cref{mthm:parabolic-power-main}, the signs therefore agree
(because the focus-side half-planes of the two parabolas coincide).
Hence the equality holds.
\end{proof}

\begin{remark}[Degenerate Cases]
If the two parabolas are tangent (i.e.\ they have a single intersection point), then the radical axis is the common tangent line at the tangency point.
If the two curves coincide, then the powers agree at every point, and the radical axis (as for circles) is excluded by definition.
\end{remark}

\begin{theorem}[Parabolic Radical Center]\label{thm:parabolic-radical-center}
Let $\CMC,\DMC,\EMC$ be three parabolas whose axes are parallel to $d$.
Assume that each pair has two distinct intersection points (so that each pair has a common chord),
and let $\ell_{\CMC\DMC},\ell_{\DMC\EMC},\ell_{\EMC\CMC}$ denote the three parabolic radical axes.
Then these three lines are concurrent.
Their intersection point is called the \emph{parabolic radical center}.
\end{theorem}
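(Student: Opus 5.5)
The plan is to turn each parabolic power into an explicit affine function of the point, so that each radical axis becomes the zero set of a difference of such functions.

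Fix coordinates in which the direction $d$ is vertical. Write the three parabolas as
\[
\CMC:\ y=\kappa_1x^2+\beta_1x+\gamma_1,\qquad
\DMC:\ y=\kappa_2x^2+\beta_2x+\gamma_2,\qquad
\EMC:\ y=\kappa_3x^2+\beta_3x+\gamma_3,
\]
with every $\kappa_i\neq0$. The first step is to obtain a closed formula for the power at $P=(u,v)$. I would combine \Cref{mthm:parabolic-power-main} with the remark on $y=\kappa x^2$, transported by a translation $(x,y)\mapsto(x+h,y+t)$. Such translations preserve slopes and $x$--differences, so they preserve $s_P(A)s_P(B)$. The resulting formula is
\[
\Pi_{\mathcal P}(P;\CMC)=\frac{\kappa_1u^2+\beta_1u+\gamma_1-v}{\kappa_1}=:f_1(u,v).
\]
The same formula can be checked directly from the secant form. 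If the line through $P$ meets $\CMC$ at $x$--coordinates $a,b$, then $a,b$ are the roots of $\kappa_1x^2+\beta_1x+\gamma_1-(\text{line})=0$. Hence $(a-u)(b-u)$ equals this quadratic evaluated at $x=u$, divided by $\kappa_1$. Since $P$ lies on the line, the line takes the value $v$ at $x=u$, which gives $f_1$. This argument also disposes of the sign subtleties in \Cref{thm:parabolic-radical-axis}.

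In the second step, note that $f_i$ is a quadratic polynomial in $u$ minus $v/\kappa_i$, so $f_i$ is affine in $v$. For the pair $\CMC,\DMC$, \Cref{thm:parabolic-radical-axis} identifies $\ell_{\CMC\DMC}$ with the common chord. By the formula above, this chord is exactly $\{f_1=f_2\}$. Directly, $f_1-f_2=0$ is the equation obtained by cancelling $x^2$ from the two parabola equations after dividing each by its $\kappa_i$. That equation is linear in $(u,v)$, and because both parabolas pass through $A$ and $B$, it vanishes at both points. So it defines the line $AB$, provided it is nondegenerate, which holds because the two parabolas are distinct and meet in two points.

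The third step is the standard radical-center argument. Suppose $\ell_{\CMC\DMC}$ and $\ell_{\DMC\EMC}$ meet at a point $P$. Then $f_1(P)=f_2(P)$ and $f_2(P)=f_3(P)$, so $f_1(P)=f_3(P)$, and therefore $P\in\ell_{\EMC\CMC}$. Equivalently, the three linear forms $f_1-f_2$, $f_2-f_3$ and $f_3-f_1$ sum to zero, so the three lines belong to a pencil.

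The main obstacle is the case where two of the axes are parallel as lines and fail to meet. In that case the pencil is one of parallel lines, and the lines are concurrent only projectively, at a common point at infinity; this is the splitting of $\ell_\infty$ into $\infty_G$ and $\infty_S$ discussed in the introduction. The $v$--coefficient of $f_i-f_j$ is $1/\kappa_j-1/\kappa_i$. When it is nonzero, the chord is the graph of a line in the affine $(u,v)$--plane, and the slope of that line is determined by $\beta$ and $\kappa$. When $\kappa_i=\kappa_j$, the chord has the form $u=\text{const}$, a singular line. Before the final step I would separate the configurations: all $\kappa_i$ distinct, two equal, or all equal (the last gives three parallel singular lines, which meet only at $\infty_S$). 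I would then either assume, as in the intended "suitable mutual position", that two axes meet in a finite point, or read concurrence in the projective sense.
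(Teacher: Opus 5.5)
Your proposal is correct, and its last step is exactly the paper's proof. The paper takes $P\in\ell_{\CMC\DMC}\cap\ell_{\DMC\EMC}$, chains $\Pi_{\mathcal P}(P;\CMC)=\Pi_{\mathcal P}(P;\DMC)=\Pi_{\mathcal P}(P;\EMC)$, concludes $P\in\ell_{\EMC\CMC}$, and says nothing else.

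Your first two steps supply what that argument tacitly uses. The inference from equal powers to membership in $\ell_{\EMC\CMC}$ is the converse of the radical-axis theorem. The paper proves that theorem only in the direction ``on the chord $\Rightarrow$ equal powers''. Your observation that $f_i-f_j$ is a nonconstant affine form vanishing at $A$ and $B$ shows that the equal-power locus is exactly the chord line. You are also right that the signed identity $\Pi_{\mathcal P}=s_P(A)s_P(B)$, applied to the secant $AB$ itself, gives the forward direction at once, with no interior/exterior discussion. Finally, the paper never checks that $\ell_{\CMC\DMC}\cap\ell_{\DMC\EMC}$ is nonempty, and you are right that it can be empty.

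One correction to your case analysis: the cases $\kappa_i=\kappa_j$ cannot occur. Two parabolas $y=\kappa x^2+\beta_i x+\gamma_i$ and $y=\kappa x^2+\beta_j x+\gamma_j$ differ by an affine function of $x$, so they meet in at most one point. Under the hypotheses the $\kappa_i$ are therefore pairwise distinct, and no radical axis is a singular line.

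The genuine failure occurs with distinct $\kappa_i$. By your formula, $\ell_{\CMC\DMC}\parallel\ell_{\DMC\EMC}$ exactly when the three points $(\beta_i/\kappa_i,\,1/\kappa_i)$ are collinear. For example, $y=x^2$, $y=2x^2-1$ and $y=3x^2-3$ meet pairwise in two points, but their common chords are $y=1$, $y=3$ and $y=3/2$: three distinct parallel lines. So the theorem as stated is false in the affine plane.

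Your fallback is the right repair: either add a non-parallelism hypothesis, or read concurrence projectively. In the projective reading the common point at infinity lies in a non-singular direction, not at $\infty_S$. Your proposed split by equality of the $\kappa_i$ would not detect these configurations.
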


\begin{proof}
Let $P\in\ell_{\CMC\DMC}\cap\ell_{\DMC\EMC}$.
Then
\[
 \Pi_{\mathcal P}(P;\CMC)=\Pi_{\mathcal P}(P;\DMC),\qquad
 \Pi_{\mathcal P}(P;\DMC)=\Pi_{\mathcal P}(P;\EMC).
\]
Hence $\Pi_{\mathcal P}(P;\CMC)=\Pi_{\mathcal P}(P;\EMC)$, and therefore $P\in\ell_{\EMC\CMC}$.
Thus the three lines are concurrent.
\end{proof}

While the difference--angle focal equation characterizes parabolas as a condition defining the curve,
the main theorem on parabolic power provides its quantitative aspect.
The correspondence between these two viewpoints forms the basis of the parabolic structure in DA geometry.


\section{Difference--Angle Parallelogram Law}

\begin{quote}
In this section, we present the \emph{Difference--Angle Parallelogram Law}, which symbolizes
the norm structure in DA geometry.

For the basic concepts and notation, we refer to~\cite{Nakazato2025Base1}.
A difference--angle triangle $ABC$ in DA geometry is denoted by $\Ptri ABC$,
and the distance between two points $X,Y$ defined from difference angles (the difference--angle norm)
is written as $|XY|_{\mathcal P}$.
\end{quote}

\begin{figure}[htbp]
  \centering
  \begin{subfigure}{0.48\textwidth}
    \centering
    \includegraphics[scale=1.0]{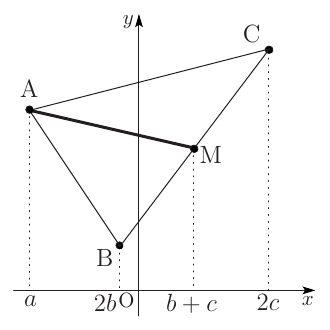}
    \subcaption{}
    \label{fig:parallelo-thm1}
  \end{subfigure}
  \hfill
  \begin{subfigure}{0.48\textwidth}
    \centering
    \includegraphics[scale=1.0]{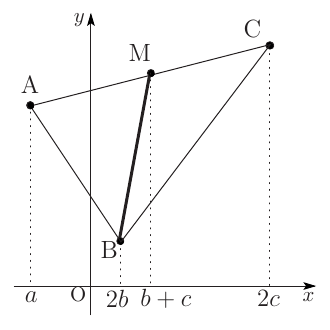}
    \subcaption{}
   \label{fig:parallelo-thm2}
  \end{subfigure}
  \caption{Difference--Angle Parallelogram Law.
  (a) The Ceva line from a positive difference--angle vertex is a median.
  (b) The Ceva line from a negative difference--angle vertex is a median.}
\end{figure}

\begin{maintheorem}[Difference--Angle Parallelogram Law]\label{mthm:DA-Parallelogram-Law}
In a difference--angle triangle $\Ptri ABC$, let $M$ be the midpoint of $BC$.
Then
\[
  {|AB|_{\mathcal P}}^2 + {|AC|_{\mathcal P}}^2
    = 2\Bigl({|AM|_{\mathcal P}}^2 + {|MB|_{\mathcal P}}^2\Bigr).
\]
\end{maintheorem}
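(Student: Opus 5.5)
The plan is to reduce everything to $x$-coordinates. In the normalized setting used in the proof of \Cref{mthm:parabolic-power-main}, the difference--angle norm of a segment $XY$ is $|XY|_{\mathcal P}=|x_Y-x_X|$, so that ${|XY|_{\mathcal P}}^2=(x_Y-x_X)^2$. This is the same fact used there, namely $\dnorm{PF}^2=(x_P-x_F)^2$, and it depends only on the horizontal projection, up to the convention of Base~1 for segments along the singular direction $d$. Once this is in place, the identity becomes an identity about real numbers.

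First, we fix notation. Write $a=x_A$, $b=x_B$, $c=x_C$. Since $M$ is the midpoint of $BC$, and midpoints are affine notions that the projection $(x,y)\mapsto x$ respects, we have $x_M=\tfrac{b+c}{2}$. We should also check that the midpoint in the DA sense, i.e.\ the point with $|BM|_{\mathcal P}=|MC|_{\mathcal P}$ on the segment $BC$, agrees with the affine midpoint. This holds because along a non-singular line the $x$-coordinate is an affine parameter. We will assume $BC$ is not parallel to $d$, since otherwise the triangle is degenerate in the DA sense.

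Second, we compute both sides:
\[
{|AB|_{\mathcal P}}^2+{|AC|_{\mathcal P}}^2=(b-a)^2+(c-a)^2,
\]
and
\[
2\Bigl(\bigl(\tfrac{b+c}{2}-a\bigr)^2+\bigl(\tfrac{c-b}{2}\bigr)^2\Bigr)
=\tfrac12\bigl((b+c-2a)^2+(c-b)^2\bigr).
\]
Setting $u=b-a$ and $v=c-a$, the right-hand side is $\tfrac12\bigl((u+v)^2+(u-v)^2\bigr)=u^2+v^2$. This is the one-dimensional parallelogram identity, and it finishes the proof.

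To match the paper's viewpoint, I would present the argument as a polarization identity: define $\langle X-A,Y-A\rangle_{\mathcal P}=(x_X-x_A)(x_Y-x_A)$, which is the power form $s_P(A)s_P(B)$ of \Cref{mthm:parabolic-power-main}, and expand $|AM|^2$ bilinearly with $M-A=\tfrac12\bigl((B-A)+(C-A)\bigr)$. The main obstacle is not the algebra but justifying two things from the axioms of Base~1: the coordinate formula for the norm, and the fact that the DA midpoint is the affine midpoint, including sign conventions. The figure distinguishes positive and negative difference-angle vertices, but since every term is squared, the sign cases collapse. I would state this explicitly rather than split into cases.
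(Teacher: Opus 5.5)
Your proposal is correct and follows essentially the same route as the paper, which also verifies the identity by a direct $x$-coordinate computation using $|XY|_{\mathcal P}=|x_Y-x_X|$ (taking $x$-coordinates $a,2b,2c$ so that $x_M=b+c$ and expanding both sides). Your substitution $u=b-a$, $v=c-a$ is only a tidier way to do the same expansion. You also note that squaring makes the paper's ordering assumption $a<2b<2c$ unnecessary, which is a harmless improvement.
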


\begin{proof}
Since we will introduce the difference--angle inner product later, we verify this theorem directly by a coordinate computation.
Let the $x$--coordinates of the vertices of $\Ptri ABC$ be
$a,\,2b,\,2c$ with $a<2b<2c$.
Then
\[
{|AB|_{\mathcal P}}^2 + {|AC|_{\mathcal P}}^2
 = (2b-a)^2 + (2c-a)^2
 = 2a^2 - 4a(b+c) + 4(b^2 + c^2).
\]
Since the $x$--coordinates of $B$ and $C$ are $2b$ and $2c$, respectively, the midpoint $M$ has $x$--coordinate $b+c$.
Hence $|AM|_{\mathcal P}=|b+c-a|$ and $|MB|_{\mathcal P}=|(b+c)-2b|=|c-b|$.
Therefore
\begin{align*}
{|AM|_{\mathcal P}}^2 + {|MB|_{\mathcal P}}^2
 &= (b+c-a)^2 + (c-b)^2 \\
 &= a^2 - 2a(b+c) + 2(b^2 + c^2).
\end{align*}
Comparing both sides yields
\[
{|AB|_{\mathcal P}}^2 + {|AC|_{\mathcal P}}^2
   = 2\bigl({|AM|_{\mathcal P}}^2 + {|MB|_{\mathcal P}}^2\bigr),
\]
as desired.
\end{proof}

\begin{remark}[On Singular Medians]
In the above proof we used the normalization $x_A=a$, $x_B=2b$, $x_C=2c$ with $a<2b<2c$.
Then the midpoint $M$ has $x$--coordinate $b+c$, and hence $a<b+c$ always holds.
Therefore the median $AM$ can never be parallel to the projective direction $d$
(i.e.\ it cannot be a singular line).

At a negative difference--angle vertex of a difference--angle triangle,
the median becomes a singular line in the difference--angle isosceles case.
Even in that case, however, the parallelogram identity itself follows from the same computation,
and the conclusion remains unchanged.
\end{remark}

This identity forms a basis for the notion of distance in DA geometry and enables the definition of an inner product.
In other words, it confirms that a structure analogous to the Euclidean parallelogram law
also holds for the difference--angle norm.

Thus, the Difference--Angle Parallelogram Law serves as a fundamental identity guaranteeing
the existence of an inner--product structure in DA geometry.


\section{Difference--Angle Inner Product and Its Structure}

\begin{quote}
Since the norm $|\cdot|_{\mathcal P}$ in DA geometry is degenerate,
a usual inner product in the Euclidean sense does not exist.
However, because the parallelogram identity
(\Cref{mthm:DA-Parallelogram-Law}) holds,
a bilinear form can be uniquely reconstructed from the norm
via the Jordan--von Neumann polarization formula.
In this section, we define the difference--angle inner product
induced by this polarization and organize its basic structure.
\end{quote}

\subsection{Degenerate Pseudo--Inner Product}\label{subsec:degenerate-inner}
In general, an inner product on the vector space $V$ of DA geometry
is characterized by the following axioms, allowing degeneracy.

\begin{definition}[Degenerate Pseudo--Inner Product]\label{def:degenerate-inner}
A mapping $\langle\cdot,\cdot\rangle_{\mathcal P}:V\times V\to\mathbb R$
is called a \emph{degenerate pseudo--inner product} if it satisfies:
\begin{enumerate}[label=(P\arabic*)]
  \item \textbf{(Bilinearity)}
        $\langle au+bv,w\rangle_{\mathcal P}
         =a\langle u,w\rangle_{\mathcal P}
          +b\langle v,w\rangle_{\mathcal P}$.
  \item \textbf{(Symmetry)}
        $\langle u,v\rangle_{\mathcal P}
         =\langle v,u\rangle_{\mathcal P}$.
  \item \textbf{(Positive Semidefiniteness)}
        $\langle u,u\rangle_{\mathcal P}\ge 0$.
  \item \textbf{(Null Space)}
        The set of all vectors $u\neq 0$ satisfying
        $\langle u,u\rangle_{\mathcal P}=0$,
        denoted by $Z_{\mathcal P}$,
        forms a linear subspace.
\end{enumerate}
\end{definition}

\begin{remark}[Quotient by the Null Space and Positive Definiteness]
\label{rem:quotient-space-inner}
On the quotient space $\bar V=V/Z_{\mathcal P}$ obtained by collapsing the null space,
the induced form
\[
  \overline{\langle u,v\rangle}_{\mathcal P}
  :=\langle u,v\rangle_{\mathcal P}
\]
is a well--defined positive definite inner product.
In the standard model, $\bar V$ can be identified with the $x$--axis, and
\[
  \overline{\langle u,v\rangle}_{\mathcal P}
  = \bar x_u\,\bar x_v
\]
holds.
\end{remark}

By \Cref{mthm:DA-Parallelogram-Law}, the norm satisfies
\[
  |u+v|_{\mathcal P}^2 + |u-v|_{\mathcal P}^2
  = 2|u|_{\mathcal P}^2 + 2|v|_{\mathcal P}^2.
\]
Hence, by the Jordan--von Neumann theorem,
a bilinear form can be uniquely recovered from the norm.

\begin{definition}[Difference--Angle Inner Product]\label{def:diff-inner}
For vectors $u,v\in V$, define
\[
  \langle u,v\rangle_{\mathcal P}
  := \frac12\Bigl(|u+v|_{\mathcal P}^2
                 - |u|_{\mathcal P}^2
                 - |v|_{\mathcal P}^2\Bigr).
\]
This bilinear form is called the \emph{difference--angle inner product}.
\end{definition}

\begin{proposition}[Pseudo--Inner Product Property via Polarization]
\label{prop:parabolic-polarization}
The mapping $\langle\cdot,\cdot\rangle_{\mathcal P}$ defined in
\Cref{def:diff-inner} satisfies all axioms {\rm(P1)}--{\rm(P4)}
of a degenerate pseudo--inner product.
\end{proposition}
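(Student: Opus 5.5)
The plan is to make the difference--angle norm explicit and then compute the polarization directly. In the standard model, the proof of \Cref{mthm:DA-Parallelogram-Law} and the computation $\dnorm{PF}^2=(x_P-x_F)^2$ in \Cref{mthm:parabolic-power-main} show that, for a vector $u\in V$ with $x$--component $x_u$, we have $|u|_{\mathcal P}^2=x_u^2$. The norm only sees the component transverse to the singular direction $d$.

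The first step is to substitute this into \Cref{def:diff-inner}. Since $x_{u+v}=x_u+x_v$, we get
\[
\langle u,v\rangle_{\mathcal P}=\tfrac12\bigl((x_u+x_v)^2-x_u^2-x_v^2\bigr)=x_u x_v .
\]
This is the concrete formula already anticipated in \Cref{rem:quotient-space-inner}.

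The axioms then follow one by one from this closed form.
\begin{enumerate}
\item[(P1)] The map $u\mapsto x_u$ is linear, so $\langle au+bv,w\rangle_{\mathcal P}=(ax_u+bx_v)x_w$, which gives bilinearity.
\item[(P2)] Symmetry is immediate, both from $x_ux_v=x_vx_u$ and from the symmetric form of the polarization expression itself.
\item[(P3)] We have $\langle u,u\rangle_{\mathcal P}=x_u^2\ge 0$.
\item[(P4)] The set $\{u:\langle u,u\rangle_{\mathcal P}=0\}$ equals $\{u:x_u=0\}=\ker(u\mapsto x_u)$. This is the line of vectors parallel to $d$, i.e.\ along singular lines, and it is a linear subspace.
\end{enumerate}
For (P4) I will note that the axiom, as literally stated, excludes $u=0$. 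I will read it as saying that $Z_{\mathcal P}\cup\{0\}$ is a subspace, namely the kernel.

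As an alternative route to bilinearity, one could follow the abstract Jordan--von Neumann argument: derive additivity in the first slot from the parallelogram law via $\langle u+v,w\rangle=\langle u,w\rangle+\langle v,w\rangle$, then extend to rational and real scalars by continuity. That argument needs the parallelogram identity for arbitrary vectors, whereas \Cref{mthm:DA-Parallelogram-Law} was proved only for triangles with ordered $x$--coordinates. It would also need continuity of the seminorm. The explicit formula $|u|_{\mathcal P}=|x_u|$ sidesteps both issues, so I will use it.

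The main obstacle is justifying that $|u|_{\mathcal P}^2=x_u^2$ holds for every vector $u\in V$, including vectors along singular directions, where the norm is $0$. I will take this from the definition of the difference--angle norm in Base~1, as used throughout the previous sections, and state it as the normalization at the start of the proof. Everything after that is a routine computation.
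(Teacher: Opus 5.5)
Your proof is correct, but it follows a different route from the paper's. The paper argues abstractly. It cites the Jordan--von Neumann theorem to obtain bilinearity and symmetry from the parallelogram identity. It reads off (P3) from $\langle u,u\rangle_{\mathcal P}=|u|_{\mathcal P}^2\ge 0$. It then says (P4) ``follows immediately from the definition of the norm.'' You instead substitute the explicit norm $|u|_{\mathcal P}=|x_u|$ into the polarization formula and obtain the closed form $\langle u,v\rangle_{\mathcal P}=x_ux_v$. The paper only performs this computation later, in the proof of \Cref{lem:CS-equality}. From that closed form you verify each axiom in one line.

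The paper's version is shorter, and for (P1)--(P2) it formally needs only the parallelogram identity. But it tacitly applies Jordan--von Neumann to a degenerate seminorm rather than a norm. It also needs the parallelogram identity for all pairs of vectors, including singular ones that do not come from a DA triangle. And since its (P4) step already falls back on the concrete norm, the abstraction buys little real model-independence.

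Your version makes every step checkable and identifies the null space explicitly as the kernel of $u\mapsto x_u$, i.e.\ the singular direction $d$. It also sensibly repairs the wording of (P4), which as written excludes $0$ from $Z_{\mathcal P}$.

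One small correction to your side remark: the ordering $a<2b<2c$ in the proof of \Cref{mthm:DA-Parallelogram-Law} is never used, because the computation is a polynomial identity in $a,b,c$. So the parallelogram identity does hold for arbitrary vectors once $|u|_{\mathcal P}=|x_u|$ is granted. At that point, though, your direct computation is simply the more economical argument.
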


\begin{proof}
For a norm satisfying the parallelogram identity,
it is a standard result of the Jordan--von Neumann theorem
that a bilinear form can be uniquely reconstructed by the polarization formula.
Moreover,
\(
  \langle u,u\rangle_{\mathcal P}
   = |u|_{\mathcal P}^2 \ge 0
\),
so positive semidefiniteness holds.
That the set of vectors satisfying $|u|_{\mathcal P}=0$
forms a linear subspace follows immediately from the definition of the norm.
\end{proof}

\begin{remark}[Terminology]
Although the difference--angle inner product is degenerate,
we shall simply call it an ``inner product'' when no confusion arises.
\end{remark}

\subsection{Basic properties: equality in Cauchy--Schwarz and degenerate cosine}
\label{subsec:basic-properties}

The most important feature of the inner product in \emph{DA geometry} is that the
Cauchy--Schwarz ``inequality'' always holds with equality.
This is consistent with the fact that the triangle inequality for the
difference--angle norm is always an equality, and it reflects that the present
geometry is based on an essentially one--dimensional projective structure.

\begin{lemma}[Cauchy--Schwarz equality in DA geometry]\label{lem:CS-equality}
For any points $A,B$, one always has
\[
 |\langle OA,OB\rangle_{\mathcal P}|
 = |OA|_{\mathcal P}\,|OB|_{\mathcal P}.
\]
\end{lemma}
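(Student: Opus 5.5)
The plan is to compute the difference--angle inner product explicitly in coordinates and observe that it factors as a product of $x$--coordinate differences. From the proof of \Cref{mthm:DA-Parallelogram-Law} and \Cref{mthm:parabolic-power-main}, the difference--angle norm of a vector depends only on its $x$--component: for a vector $u$ with $x$--component $\bar x_u$ we have $|u|_{\mathcal P}=|\bar x_u|$. (Throughout, the direction $d$ is taken as the vertical direction.) I will fix the origin $O$ and write $\alpha=x_A-x_O$ and $\beta=x_B-x_O$. Then
\[
|OA|_{\mathcal P}=|\alpha|, \qquad |OB|_{\mathcal P}=|\beta|.
\]

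The first step is to identify the $x$--component of the sum $OA+OB$. Vector addition is componentwise, so this $x$--component is $\alpha+\beta$. Hence $|OA+OB|_{\mathcal P}^2=(\alpha+\beta)^2$.

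The second step is to substitute into the polarization formula of \Cref{def:diff-inner}:
\[
\langle OA,OB\rangle_{\mathcal P}
=\tfrac12\bigl((\alpha+\beta)^2-\alpha^2-\beta^2\bigr)
=\alpha\beta .
\]
This agrees with the quotient description $\overline{\langle u,v\rangle}_{\mathcal P}=\bar x_u\bar x_v$ in \Cref{rem:quotient-space-inner}.

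The third step is to take absolute values, which gives
\[
|\langle OA,OB\rangle_{\mathcal P}|=|\alpha|\,|\beta|=|OA|_{\mathcal P}\,|OB|_{\mathcal P},
\]
as claimed.

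The only real obstacle is justifying that the norm of the vector $OA+OB$ is computed from the sum of the $x$--components. This requires the vectors in $V$ to be treated as free translation vectors, with the norm defined as $|x$--component$|$ independently of base point. I would make this explicit by noting that $|XY|_{\mathcal P}=|x_Y-x_X|$ is translation invariant. Consequently the norm descends to the quotient $V/Z_{\mathcal P}$ (the vertical vectors), where it is just the absolute value on a one--dimensional space. In one dimension Cauchy--Schwarz is trivially an equality. The degenerate cases with $\alpha=0$ or $\beta=0$, that is, $A$ or $B$ on the singular line through $O$, give $0=0$ and are covered by the same formula.
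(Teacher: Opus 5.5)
Your proposal is correct and follows essentially the paper's proof. Both arguments work in the standard model, where the norm is the absolute value of the $x$--component, compute the polarization to get $\langle OA,OB\rangle_{\mathcal P}=x_Ax_B$ (your $\alpha\beta$), and then take absolute values. The only cosmetic difference is that the paper polarizes through the side $AB$, i.e.\ $\tfrac12\bigl(|OA|_{\mathcal P}^2+|OB|_{\mathcal P}^2-|AB|_{\mathcal P}^2\bigr)$, whereas you use the sum form of \Cref{def:diff-inner} directly, which if anything matches the stated definition more faithfully.
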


\begin{proof}
In the standard model, we have
$|OA|_{\mathcal P}=|x_A|$,
$|OB|_{\mathcal P}=|x_B|$, and
$|AB|_{\mathcal P}=|x_A-x_B|$.
Hence
\[
\langle OA,OB\rangle_{\mathcal P}
= \tfrac12\!\left(|x_A|^2+|x_B|^2-|x_B-x_A|^2\right)
= x_A x_B.
\]
Therefore
\[
|\langle OA,OB\rangle_{\mathcal P}|
= |x_A x_B|
= |OA|_{\mathcal P}\,|OB|_{\mathcal P}.
\]
\end{proof}

\begin{remark}[Sign structure of angles]\label{rem:sign-structure}
Since the DA inner product coincides with $x_Ax_B$,
the sign in the angle structure is uniquely determined by the projective direction,
that is, by the sign of the $x$--coordinate.
In particular, among the interior angles of a DA triangle,
exactly one is a negative difference angle.
\end{remark}

\begin{proposition}[Sign structure of DA triangles]\label{prop:two-positive-one-negative}
Among the interior angles $\Pangle A$, $\Pangle B$, and $\Pangle C$
of a DA triangle $\Ptri ABC$,
there exists exactly one negative difference angle.
\end{proposition}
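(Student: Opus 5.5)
We work in the standard model fixed earlier. The projective direction $d$ is vertical, so singular lines are vertical. A difference angle at a vertex is computed from the slopes of the two sides through that vertex, taken as the slope of one side minus the slope of the other, with the orientation convention of Base~1. In the proof of \Cref{prop:uni-exist-diff-parabola-focus} this convention was used in the form $\Pangle APF=\Slp{AP}-\Slp{FP}$ up to the ordering of the sides.

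The plan is to order the vertices by $x$-coordinate and compute the three interior angles explicitly. A DA triangle has no side on a singular line, so the $x$-coordinates are pairwise distinct. Relabel so that $x_A<x_B<x_C$, and write $m_{AB},m_{BC},m_{CA}$ for the side slopes. Applying the convention (each interior angle is the slope of the side leaving towards the larger $x$ minus the appropriate other side) gives
\[
\Pangle A=m_{AC}-m_{AB},\qquad \Pangle C=m_{BC}-m_{AC},\qquad \Pangle B=m_{BC}-m_{AB},
\]
with signs fixed by the Base~1 convention. The key identity is the additivity relation
\[
\Pangle B=\Pangle A+\Pangle C,
\]
which is exactly the DA angle-sum law from Base~1, with the middle vertex playing the role of the ``exterior'' angle. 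In the convention where the interior angles sum to zero, the middle angle appears as $-(\Pangle A+\Pangle C)$.

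Next we show that $\Pangle A$ and $\Pangle C$ have the same sign, which the middle vertex then does not share. The slope $m_{AC}$ is a weighted average of $m_{AB}$ and $m_{BC}$:
\[
m_{AC}=\frac{(x_B-x_A)\,m_{AB}+(x_C-x_B)\,m_{BC}}{x_C-x_A}.
\]
Since the triangle is nondegenerate, $m_{AB}\neq m_{BC}$, so $m_{AC}$ lies strictly between them. Consequently $m_{AC}-m_{AB}$ and $m_{BC}-m_{AC}$ are nonzero and of the same sign. Hence $\Pangle A$ and $\Pangle C$ are nonzero and of one sign. The middle angle is, with the interior-angle orientation, minus their sum, so it has the opposite sign and is nonzero.

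This gives exactly one negative difference angle, provided the orientation convention makes the outer two positive. Otherwise we reverse the global orientation of all three angles, as Base~1 does, so that the lone angle of opposite sign is the negative one. This is consistent with \Cref{rem:sign-structure}: the middle vertex is the one where the inner product $\langle BA,BC\rangle_{\mathcal P}=(x_A-x_B)(x_C-x_B)$ is negative, while at the outer vertices it is positive. This gives an invariant characterization, and we would phrase the final statement in those terms.

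The main obstacle is pinning down the sign convention for interior difference angles from Base~1, specifically which side's slope is subtracted from which at each vertex. That convention determines whether the odd angle is the negative one rather than the positive one. I would first check it against the computation in \Cref{prop:uni-exist-diff-parabola-focus}, where $\Pangle FAP=-2q/a$ with $A$ the leftmost vertex when $a<0$, and adjust the labeling accordingly.
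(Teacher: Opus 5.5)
Your argument is correct (given the same orientation convention the paper tacitly uses) and reaches the paper's conclusion: the vertex that is in the middle in $x$-order carries the odd sign. It gets there by a different mechanism.

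The paper writes all three angles over one common numerator, twice the signed area of the triangle. The denominators are $(x_B-x_A)(x_C-x_A)$, $(x_A-x_B)(x_C-x_B)$, $(x_A-x_C)(x_B-x_C)$, which are exactly the inner products $\langle AB,AC\rangle_{\mathcal P}$, $\langle BA,BC\rangle_{\mathcal P}$, $\langle CA,CB\rangle_{\mathcal P}$ of your closing remark, and the paper reads the signs off these denominators. You never compute a numerator. The weighted-average formula puts $m_{AC}$ strictly between $m_{AB}$ and $m_{BC}$, so $\theta_A,\theta_C$ are nonzero of one sign, and $\theta_A+\theta_B+\theta_C=0$ forces $\theta_B$ to the other sign.

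The paper's route yields an explicit angle formula that ties directly to the area lemma and the Brocard computations. Yours is more structural and isolates the one place a convention enters: the sign of $m_{AC}-m_{AB}$, i.e.\ whether $B$ lies below the chord $AC$.

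That caveat is genuine, and the paper's phrase ``determined by the sign of its denominator'' silently assumes a positive numerator. Use the paper's convention as visible in the Brocard section ($\theta_A=\kappa(c-b)$, $\theta_B=\kappa(a-c)$, $\theta_C=\kappa(b-a)$). This is a safer calibration than the focal-equation computation you propose, where the signs are not applied by one uniform rule. With it, $A=(0,0)$, $B=(1,1)$, $C=(2,0)$ gives $(\theta_A,\theta_B,\theta_C)=(-1,2,-1)$. So the literal statement needs a positively oriented labeling, e.g.\ $x_A<x_B<x_C$ on a circumparabola with $\kappa>0$.

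Two clean-ups. First, delete your first display $\theta_B=m_{BC}-m_{AB}$: it has the wrong sign for the cyclic labels $BAC,CBA,ACB$, and with it all three angles would share a sign. Second, rather than citing a Base~1 angle-sum law or a ``global reversal,'' compute the angle at $Y$ of $XYZ$ as $m_{YZ}-m_{YX}$ (or uniformly its negative) for $XYZ=BAC,CBA,ACB$. The resulting sum telescopes to $0$ because $m_{XY}=m_{YX}$.
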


\begin{proof}
Assume that $A,B,C$ satisfy $x_A<x_B<x_C$.
The sign of a difference angle is determined by the sign of its denominator, and
\[
(x_B-x_A)(x_C-x_A)>0,\qquad
(x_A-x_B)(x_C-x_B)<0,\qquad
(x_A-x_C)(x_B-x_C)>0.
\]
Hence the unique negative difference angle is $\Pangle B$.
\end{proof}

\begin{figure}[htbp]
  \centering
  \begin{subfigure}{0.3\textwidth}
    \centering
    \includegraphics[scale=0.9]{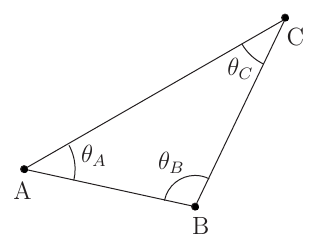}
    \subcaption{}
    \label{fig:cyclic-inner-prod1}
  \end{subfigure}
  \begin{subfigure}{0.3\textwidth}
    \centering
    \includegraphics[scale=0.9]{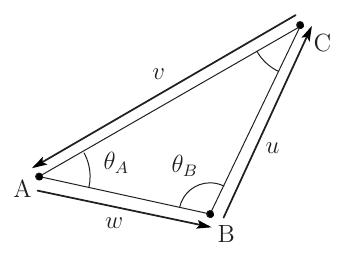}
    \subcaption{}
    \label{fig:cyclic-inner-prod2}
  \end{subfigure}
  \begin{subfigure}{0.3\textwidth}
    \centering
    \includegraphics[scale=0.9]{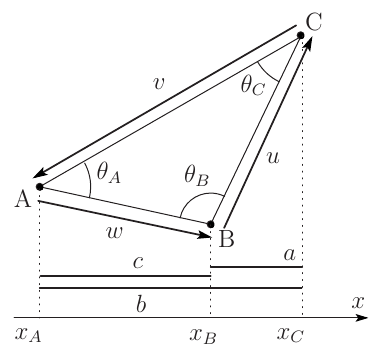}
    \subcaption{}
    \label{fig:cyclic-inner-prod3}
  \end{subfigure}
  \caption{(a) A DA triangle.
(b) Cyclic inner product identity for edge vectors.
(c) Positive cyclic identity in the normalized configuration.}
\end{figure}

\begin{lemma}[Cyclic inner product identity for edge vectors]\label{lem:cyclic-inner}
Let $u=\overrightarrow{BC}$, $v=\overrightarrow{CA}$, and $w=\overrightarrow{AB}$.
Then $u+v+w=0$, and for any (pseudo--)inner product one has
\[
\langle u,v\rangle_{\mathcal P}+\langle v,w\rangle_{\mathcal P}+\langle w,u\rangle_{\mathcal P}
=-\tfrac12\bigl(|u|_{\mathcal P}^2+|v|_{\mathcal P}^2+|w|_{\mathcal P}^2\bigr).
\]
\end{lemma}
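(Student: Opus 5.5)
The identity $u+v+w=0$ follows from telescoping: $\overrightarrow{BC}+\overrightarrow{CA}+\overrightarrow{AB}=\overrightarrow{BB}=0$. For the main identity, I will expand the squared norm of this zero vector using only bilinearity (P1) and symmetry (P2) from \Cref{prop:parabolic-polarization}. Concretely, I compute $\langle u+v+w,\,u+v+w\rangle_{\mathcal P}$ in two ways. Since $u+v+w=0$, bilinearity gives $\langle 0,0\rangle_{\mathcal P}=0$. Expanding the same expression by (P1) and (P2) gives
\[
0=\langle u,u\rangle_{\mathcal P}+\langle v,v\rangle_{\mathcal P}+\langle w,w\rangle_{\mathcal P}
+2\bigl(\langle u,v\rangle_{\mathcal P}+\langle v,w\rangle_{\mathcal P}+\langle w,u\rangle_{\mathcal P}\bigr).
\]

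Next I identify $\langle x,x\rangle_{\mathcal P}$ with $|x|_{\mathcal P}^2$. This follows from \Cref{def:diff-inner} with $u=v=x$, using the homogeneity $|2x|_{\mathcal P}=2|x|_{\mathcal P}$, which is immediate in the standard model where the norm is the absolute $x$--coordinate difference. It was also noted in the proof of \Cref{prop:parabolic-polarization}. Substituting and dividing by $2$ gives the claimed identity.

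The only point needing care is that the degeneracy of $\langle\cdot,\cdot\rangle_{\mathcal P}$ plays no role. The expansion uses only (P1) and (P2), and never positive definiteness, so the argument holds verbatim for any symmetric bilinear form. This justifies the phrase ``for any (pseudo--)inner product.''

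As an optional sanity check, I would verify the identity in the standard model, where $\langle p,q\rangle_{\mathcal P}=x_p x_q$ by \Cref{lem:CS-equality}. Write $\alpha=x_C-x_B$, $\beta=x_A-x_C$, $\gamma=x_B-x_A$, so that $\alpha+\beta+\gamma=0$. Then $\alpha\beta+\beta\gamma+\gamma\alpha=-\tfrac12(\alpha^2+\beta^2+\gamma^2)$ is exactly the expansion of $(\alpha+\beta+\gamma)^2=0$. I expect no real obstacle here.
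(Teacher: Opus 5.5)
Your proposal is correct and is essentially the paper's own proof: it expands $\langle u+v+w,\,u+v+w\rangle_{\mathcal P}=0$ by bilinearity and symmetry, identifies $\langle x,x\rangle_{\mathcal P}=|x|_{\mathcal P}^2$, and divides by $2$. Your explicit derivation of $\langle x,x\rangle_{\mathcal P}=|x|_{\mathcal P}^2$ from the polarization definition and your standard-model check are harmless extras that the paper leaves implicit.
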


\begin{proof}
Expanding $0=\langle u+v+w,\,u+v+w\rangle_{\mathcal P}$, we obtain
\[
|u|_{\mathcal P}^2+|v|_{\mathcal P}^2+|w|_{\mathcal P}^2
+2\bigl(\langle u,v\rangle_{\mathcal P}+\langle v,w\rangle_{\mathcal P}+\langle w,u\rangle_{\mathcal P}\bigr)=0.
\]
\end{proof}

This lemma yields an important identity that strengthens the motivation for
$\sinp\theta$ introduced later.

\begin{proposition}[Positive cyclic identity]\label{prop:positive-cyclic}
Let $\Ptri ABC$ be a DA triangle, and set
$a=|BC|_{\mathcal P}$, $b=|CA|_{\mathcal P}$, and $c=|AB|_{\mathcal P}$.
Then
\[
a^2+b^2+c^2
=2\sqrt{a^2b^2+b^2c^2+c^2a^2}.
\]
\end{proposition}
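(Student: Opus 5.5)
We work in the standard model, where the difference--angle norm depends only on $x$--coordinates: $|XY|_{\mathcal P}=|x_X-x_Y|$. Set $p=x_C-x_B$ and $q=x_A-x_C$. Then $x_B-x_A=-(p+q)$, and
\[
a^2=p^2,\qquad b^2=q^2,\qquad c^2=(p+q)^2 .
\]
The edge vectors $u=\overrightarrow{BC}$, $v=\overrightarrow{CA}$, $w=\overrightarrow{AB}$ satisfy $u+v+w=0$. By the computation in \Cref{lem:CS-equality}, the inner product of two vectors is the product of their $x$--components. Hence
\[
\langle u,v\rangle_{\mathcal P}=pq,\qquad \langle v,w\rangle_{\mathcal P}=-q(p+q),\qquad \langle w,u\rangle_{\mathcal P}=-p(p+q).
\]

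The first step is to evaluate both sides directly.
\begin{itemize}
\item The left side is $a^2+b^2+c^2=2(p^2+pq+q^2)$. Alternatively, \Cref{lem:cyclic-inner} gives it as $-2$ times the sum of the three inner products, and that sum equals $-(p^2+pq+q^2)$.
\item For the right side, compute
\[
a^2b^2+b^2c^2+c^2a^2=p^2q^2+(p^2+q^2)(p+q)^2 .
\]
\end{itemize}

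The key step is to show that this last expression equals $(p^2+pq+q^2)^2$. Writing $s=p^2+q^2$ and $t=pq$, we have $(p+q)^2=s+2t$. So the expression becomes
\[
t^2+s(s+2t)=s^2+2st+t^2=(s+t)^2 .
\]
This gives
\[
\sqrt{a^2b^2+b^2c^2+c^2a^2}=|p^2+pq+q^2| .
\]

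The remaining point is the sign when taking the square root, and I expect this to be the only place needing care. Since
\[
p^2+pq+q^2=\bigl(p+\tfrac q2\bigr)^2+\tfrac34q^2\ge 0,
\]
the absolute value can be dropped. Then the right side is $2(p^2+pq+q^2)$, which matches the left side. For a nondegenerate triangle, where the $x$--coordinates are distinct, this quantity is in fact strictly positive.

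A more conceptual way to organize the proof, which I might present instead, uses the Cauchy--Schwarz equality of \Cref{lem:CS-equality} together with the sign structure of \Cref{prop:two-positive-one-negative}. The three edge inner products have absolute values $ab$, $bc$, $ca$. Exactly one of them is positive, namely the one at the negative-angle vertex. The relation $u+v+w=0$ forces the largest side to be the sum of the other two, i.e. the triangle inequality degenerates to equality. With sides $p,q,p+q$ (all taken positive), squaring $a^2+b^2+c^2$ and comparing with $4(a^2b^2+b^2c^2+c^2a^2)$ reduces to the same identity $(s+t)^2$ as above.
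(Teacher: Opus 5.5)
Your proof is correct, but its main line is organized differently from the paper's.

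The paper proceeds as follows. It fixes $x_A<x_B<x_C$ and reads off the signs $\langle\overrightarrow{BC},\overrightarrow{CA}\rangle_{\mathcal P}=-ab$, $\langle\overrightarrow{CA},\overrightarrow{AB}\rangle_{\mathcal P}=-bc$, $\langle\overrightarrow{AB},\overrightarrow{BC}\rangle_{\mathcal P}=ca$ from \Cref{prop:two-positive-one-negative}. Substituting these into \Cref{lem:cyclic-inner} gives $a^2+b^2+c^2=2(ab+bc-ca)$. It then squares and uses the triangle equality $a+c=b$ to kill the cross term $2abc(b-a-c)$.

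You instead parametrize by the signed $x$-differences $p$, $q$, $-(p+q)$. The ordering, the sign pattern and the triangle equality are then all absorbed into the parametrization, and the statement reduces to the polynomial identity $p^2q^2+(p^2+q^2)(p+q)^2=(p^2+pq+q^2)^2$.

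What your route buys:
\begin{itemize}
\item It is WLOG-free.
\item It uses nothing beyond $|XY|_{\mathcal P}=|x_X-x_Y|$; the inner products you list are never actually needed.
\item The square-root sign you flag is automatic, since $p^2+pq+q^2=\tfrac12(a^2+b^2+c^2)$.
\end{itemize}

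The paper's route is less economical. It deliberately presents the identity as a consequence of the cyclic inner-product identity, which is its stated motivation for $\sinp$. Your closing ``conceptual'' sketch is essentially that proof.

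Both versions are, at bottom, checking that $(a^2+b^2+c^2)^2-4(a^2b^2+b^2c^2+c^2a^2)=-(a+b+c)(-a+b+c)(a-b+c)(a+b-c)$ vanishes. It does so precisely because one side is the sum of the other two.
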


\begin{proof}
We square both sides of the cyclic inner product identity
(\Cref{lem:cyclic-inner}) and use the sign structure of DA triangles.

Assume $x_A < x_B < x_C$.
By \Cref{prop:two-positive-one-negative}, we have $\theta_B<0$.
Hence the relevant inner products satisfy
\[
\langle \overrightarrow{BC},\overrightarrow{CA}\rangle_{\mathcal P}=-ab,\qquad
\langle \overrightarrow{CA},\overrightarrow{AB}\rangle_{\mathcal P}=-bc,\qquad
\langle \overrightarrow{AB},\overrightarrow{BC}\rangle_{\mathcal P}=ca.
\]

Moreover, since the difference--angle norm is based on a one--dimensional projective structure,
the triangle inequality is always an equality, and thus
\[
a+c=b.
\]
This reflects the linear nature peculiar to DA geometry.

Substituting these into \Cref{lem:cyclic-inner}, we obtain
\[
-2(-ab-bc+ca)=a^2+b^2+c^2 .
\]
Squaring both sides and simplifying yields
\[
4\bigl(a^2b^2+b^2c^2+c^2a^2+2abc(b-a-c)\bigr)
=(a^2+b^2+c^2)^2 .
\]
Using $a+c=b$, we have $2abc(b-a-c)=0$, and hence
\[
(a^2+b^2+c^2)^2 = 4(a^2b^2+b^2c^2+c^2a^2).
\]
Taking square roots gives the claim.
\end{proof}

\begin{proposition}[Isoptics of the DA inner product are always singular lines]\label{prop:isoptic}
For a fixed point $A$ and a constant $c\in\R$, define
\[
\mathcal{I}(A;c)
:=\{\,X\mid \langle OA,OX\rangle_{\mathcal P}=c\,\}.
\]
If $x_A\neq0$, then $\mathcal{I}(A;c)$ is the line (a singular line) given by $x_X=c/x_A$.
\end{proposition}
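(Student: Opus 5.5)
The plan is to reduce the isoptic condition to a linear equation in $x_X$, using the explicit formula for the inner product already computed in the proof of \Cref{lem:CS-equality}. There it was shown that in the standard model, for any points $A$ and $X$,
\[
\langle OA,OX\rangle_{\mathcal P}
=\tfrac12\bigl(|x_A|^2+|x_X|^2-|x_X-x_A|^2\bigr)=x_A x_X .
\]
So the defining condition of $\mathcal{I}(A;c)$ reads $x_A x_X=c$. Since $x_A\neq 0$, this is equivalent to $x_X=c/x_A$. The condition involves only the $x$--coordinate of $X$ and places no constraint on $y_X$.

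Next, identify this solution set geometrically. The set $\{X \mid x_X=c/x_A\}$ is a line parallel to the projective direction $d$ (the $y$--direction in the normalized model). By \Cref{def:diff-focal-eq} and the surrounding conventions, such a line is exactly a singular line $L_X$. We must also check that the set is nonempty and is the whole line, not a proper subset of it. Both inclusions hold because the equivalence $x_Ax_X=c\iff x_X=c/x_A$ is reversible for every point $X$.

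The only point needing care is justifying that the formula $\langle OA,OX\rangle_{\mathcal P}=x_Ax_X$ applies with $O$ arbitrary in the standard model, i.e.\ that the norm depends only on $x$--coordinate differences. For this I would cite the norm convention $|PQ|_{\mathcal P}=|x_Q-x_P|$ used in \Cref{mthm:DA-Parallelogram-Law} and \Cref{lem:CS-equality}, normalizing $x_O=0$. This normalization is harmless because translations in the $x$--direction shift both $x_A$ and $x_X$ by the same amount. I do not anticipate a real obstacle.

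Finally, I would note the degenerate case for contrast. If $x_A=0$, the set $\mathcal{I}(A;c)$ is the whole plane when $c=0$ and empty otherwise, which is why the hypothesis $x_A\neq0$ is needed.
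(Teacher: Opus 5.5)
Your proof is correct and is the paper's argument: the paper's proof is the single observation that, by \Cref{lem:CS-equality}, $\langle OA,OX\rangle_{\mathcal P}=x_Ax_X=c$. Your extra checks (both inclusions, and why $x_A\neq0$ is needed) make explicit what the paper leaves implicit. One small precision: $x_O=0$ is not merely a harmless normalization but something the statement itself presupposes, since for a general $O$ the locus is $x_X-x_O=c/(x_A-x_O)$.
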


\begin{proof}
By \Cref{lem:CS-equality}, we have $\langle OA,OX\rangle_{\mathcal P}=x_A x_X=c$.
\end{proof}

\begin{remark}[Contrast with Euclidean isoptics]
In the Euclidean inner product, an isoptic curve (a constant-angle locus)
is in general not a line; it is a conic defined by $\angle(OA,OX)=\text{const.}$
In DA geometry, since the inner product degenerates to a one--dimensional projective structure,
every isoptic coincides with a singular line.
\end{remark}

\par\smallskip

Finally, in Euclidean geometry the parallelogram law was generalized by M.\,Stewart
to what is now known as Stewart's theorem.
In DA geometry, the same lineage is inherited through the inner product:
our inner product reproduces a Stewart-type identity.

\begin{corollary}[Difference--angle Stewart's theorem]\label{cor:DA-Stewart}
Let $\Ptri ABC$ be a DA triangle, and let $S$ be a point on $AB$.
If $|AS|_{\mathcal P}=p$ and $|SB|_{\mathcal P}=q$, then
\[
  (q|CA|_{\mathcal P})^2 + (p|CB|_{\mathcal P})^2
    = |AB|_{\mathcal P}\Bigl(|CS|_{\mathcal P}^2 + pq\Bigr).
\]
\end{corollary}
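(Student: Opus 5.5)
The plan is to work entirely with the difference--angle inner product of \Cref{def:diff-inner}, as the paper announces for Stewart's theorem, and only at the end compare with the stated form. First I would put $S$ as a vector combination of $A$ and $B$. Since $S\in AB$ and the norm only sees $x$--coordinates (\Cref{lem:CS-equality}), I would write
\[
\overrightarrow{CS}=\tfrac{q}{p+q}\overrightarrow{CA}+\tfrac{p}{p+q}\overrightarrow{CB},
\]
where $|AB|_{\mathcal P}=p+q$. This uses that the triangle inequality is an equality on a line, so $S$ between $A$ and $B$ gives additivity of the norm.

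Next I would expand $|CS|_{\mathcal P}^2=\langle \overrightarrow{CS},\overrightarrow{CS}\rangle_{\mathcal P}$ by bilinearity (P1) and symmetry (P2) from \Cref{prop:parabolic-polarization}. The cross term $\langle \overrightarrow{CA},\overrightarrow{CB}\rangle_{\mathcal P}$ I would eliminate using the polarization identity:
\[
2\langle \overrightarrow{CA},\overrightarrow{CB}\rangle_{\mathcal P}=|CA|_{\mathcal P}^2+|CB|_{\mathcal P}^2-(p+q)^2.
\]
Multiplying by $(p+q)$ and simplifying should produce the Stewart-type relation
\[
q|CA|_{\mathcal P}^2+p|CB|_{\mathcal P}^2=|AB|_{\mathcal P}\bigl(|CS|_{\mathcal P}^2+pq\bigr).
\]
The only care needed here is the sign bookkeeping from \Cref{prop:two-positive-one-negative}. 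I would handle it by normalizing $x_A<x_S<x_B$ and checking that $p,q$ are the signed $x$--differences.

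The final step is to bring this into the form $(q|CA|_{\mathcal P})^2+(p|CB|_{\mathcal P})^2$ as printed, and this is where I expect the real obstacle. The printed left side is homogeneous of degree $4$ in the coordinates, while the right side is of degree $3$. Converting the linear weights $q,p$ into squared weights $q^2,p^2$ would therefore require an extra relation, such as the cyclic identity of \Cref{prop:positive-cyclic} or $p=q$.

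Before committing, I would test the printed form on the normalized configuration $x_A=0$, $x_B=p+q$, $x_S=p$, $x_C=c$.
\begin{itemize}
\item In the symmetric case $p=q=1$, $c=3$, both sides equal $10$, so the printed form holds there.
\item For $p=1$, $q=2$, $c=4$, the two sides are $65$ and $33$, so the printed form fails.
\end{itemize}
So my plan is to prove the linearly weighted identity above rigorously by the inner-product computation. I would then show that the printed squared-weight version agrees with it only in the midpoint case $p=q$, where it reduces to \Cref{mthm:DA-Parallelogram-Law} up to the normalization $|AB|_{\mathcal P}=2$. In general I would record the printed form as requiring the linear weights $q,p$ rather than their squares.
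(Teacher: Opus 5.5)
Your proposal is correct. Your reading of the displayed formula is also confirmed by the paper itself. Its proof ends with $|CS|_{\mathcal P}^2+pq=\frac{1}{p+q}\{q|CA|_{\mathcal P}^2+p|CB|_{\mathcal P}^2\}$ and then multiplies by $p+q$, which gives exactly your linearly weighted identity. The Appendix restates the theorem as $p|BC|_{\mathcal P}^2+q|CA|_{\mathcal P}^2=(s^2+pq)|AB|_{\mathcal P}$. So the squared weights in the corollary are a typo, and your degree count and your counterexample ($65\neq 33$) are a valid way to detect it.

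Your route differs from the paper's in how the cross term is handled. The paper starts from the same decomposition $\overrightarrow{CS}=\frac{q}{p+q}\overrightarrow{CA}+\frac{p}{p+q}\overrightarrow{CB}$. It then replaces the cross term using the Cauchy--Schwarz equality $\langle\overrightarrow{CA},\overrightarrow{CB}\rangle_{\mathcal P}=\pm|CA|_{\mathcal P}|CB|_{\mathcal P}$ and the triangle equality $|CA|_{\mathcal P}\mp|CB|_{\mathcal P}=|AB|_{\mathcal P}$. This forces a case split: $B$ (or $A$) as the negative vertex versus $C$ as the negative vertex, i.e.\ whether $x_C$ lies outside or between $x_A,x_B$.

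Your polarization substitution $2\langle\overrightarrow{CA},\overrightarrow{CB}\rangle_{\mathcal P}=|CA|_{\mathcal P}^2+|CB|_{\mathcal P}^2-(p+q)^2$ is the classical Euclidean argument, and it needs no case split. In fact, the sign bookkeeping via the sign-structure proposition that you planned is unnecessary. The only DA-specific input is that $x_S$ lies between $x_A$ and $x_B$, so that $|AB|_{\mathcal P}=p+q$. Your route shows that Stewart's identity follows from bilinearity alone, for any symmetric bilinear form. The paper's route instead shows how the degenerate features (Cauchy--Schwarz equality, triangle equality) produce the identity, at the cost of casework, with the second case only sketched.

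One side claim should be dropped. The printed and linear forms agree exactly when $q(q-1)|CA|_{\mathcal P}^2+p(p-1)|CB|_{\mathcal P}^2=0$. With $p=q$ this forces $p=q=1$, not just a midpoint. There are also non-midpoint solutions, e.g.\ $p=2$, $q=\tfrac12$, $|CA|_{\mathcal P}^2=8|CB|_{\mathcal P}^2$, realized by $x_A=0$, $x_B=\tfrac52$, $x_C=\tfrac{5\sqrt2}{2\sqrt2-1}$. So ``agrees only in the midpoint case'' cannot be proved. Your degree argument and numerical counterexample already suffice to justify the correction.
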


\begin{proof}
We first treat the case where $B$ is the negative difference-angle vertex in $\Ptri ABC$,
namely, $x_A<x_B<x_C$.
Since
\[
\vec{CS}=\frac{q}{p+q}\vec{CA}+\frac{p}{p+q}\vec{CB},
\]
we have
\begin{align*}
|CS|_{\mathcal P}^2 + pq
&=\frac{q^2}{(p+q)^2}|CA|_{\mathcal P}^2 +\frac{p^2}{(p+q)^2}|CB|_{\mathcal P}^2
   +2\frac{pq}{(p+q)^2} \langle \vec{CA},\vec{CB}\rangle_{\mathcal P}+pq\\
&=\frac{q^2}{(p+q)^2}|CA|_{\mathcal P}^2 +\frac{p^2}{(p+q)^2}|CB|_{\mathcal P}^2
   +2\frac{pq}{(p+q)^2}|CA|_{\mathcal P}|CB|_{\mathcal P}+pq\\
&=\frac{1}{p+q}\left\{\frac{q^2}{p+q}|CA|_{\mathcal P}^2 +\frac{p^2}{p+q}|CB|_{\mathcal P}^2
   +\frac{2pq}{p+q}|CA|_{\mathcal P}|CB|_{\mathcal P}+pq(p+q)\right\}\\
&=\frac{1}{p+q}\left\{q|CA|_{\mathcal P}^2+p|CB|_{\mathcal P}^2
   -\frac{pq}{p+q}\bigl(|CA|_{\mathcal P}-|CB|_{\mathcal P}\bigr)^2+pq(p+q)\right\}.
\end{align*}
Since $x_A<x_B<x_C$, the definition of the difference--angle norm yields
\[
|CA|_{\mathcal P}-|CB|_{\mathcal P}=|AB|_{\mathcal P}.
\]
Therefore,
\[
|CS|_{\mathcal P}^2 + pq
=\frac{1}{p+q}\left\{q|CA|_{\mathcal P}^2+p|CB|_{\mathcal P}^2
-\frac{pq}{p+q}|AB|_{\mathcal P}^2+pq(p+q)\right\}.
\]
Since $|AB|_{\mathcal P}=p+q$, we obtain
\[
|CS|_{\mathcal P}^2 + pq
=\frac{1}{p+q}\left\{q|CA|_{\mathcal P}^2+p|CB|_{\mathcal P}^2\right\},
\]
and multiplying both sides by $p+q$ gives the desired identity.

If $C$ is the negative difference-angle vertex instead, then $\vec{CA}$ and $\vec{CB}$
have opposite directions, and using the triangle equality
$|CA|_{\mathcal P}+|CB|_{\mathcal P}=|AB|_{\mathcal P}$,
the same conclusion follows in the same manner.
\end{proof}

\begin{remark}[Geometric consistency]
This corollary shows that the DA inner product, constructed solely from the angle axioms,
naturally reproduces classical metric relations.
In particular, the Stewart-type identity appearing here is an automatic consequence
of the intrinsic structure of the axiomatic system.
\end{remark}

\begin{remark}[An alternative geometric derivation]\label{rem:stewart-geo}
The Stewart-type identity follows directly from the bilinearity of the DA inner product,
but the same relation can also be obtained by combining
the parabolic Ptolemy theorem and the parabolic power theorem.

More precisely, introduce the point $D$ where the Ceva line $CS$ through $S\in AB$
meets the circumparabola. Using similarity relations among
$\triangle ADS$, $\triangle BDS$ and $\triangle ACS$, $\triangle BCS$,
together with
\[
 |AS|_{\mathcal P}\,|BS|_{\mathcal P}=|CS|_{\mathcal P}\,|SD|_{\mathcal P}
\]
(the parabolic power),
one can derive Stewart's identity without algebraic computations
(see Appendix for details).
\end{remark}


\section{Parabolic trigonometric functions}\label{sec:parabolic-trig}

\begin{quote}
In \emph{DA geometry}, if one defines the ``cosine'' in the usual way by
\(
\cos\theta=\dfrac{\langle u,v\rangle_{\mathcal P}}{|u|_{\mathcal P}|v|_{\mathcal P}},
\)
then the Cauchy--Schwarz inequality always becomes an equality, so that this
quantity carries essentially no Euclidean-type information: $\cos\theta$ can
take only the values of absolute value $1$.

Nevertheless, once one takes into account quantities such as the area of a DA
triangle, it is still necessary to introduce an appropriate notion of
trigonometric functions in DA geometry.

Accordingly, we denote the trigonometric functions in DA geometry by
$\cosp\theta$ and $\sinp\theta$, and summarize the requirements they should
satisfy as follows:
\begin{itemize}
\item $\sinp\theta$ should be an odd function, and $\cosp^{2}\theta$ should not depend on the orientation of the difference angle.
\item $\sinp\theta$ should represent a quantity related to area.
\item In analogy with the Euclidean identity
      $\langle u,v\rangle = |u||v|\cos\theta$,
      one should be able to write
      $\langle u,v\rangle_{\mathcal P}
      =|u|_{\mathcal P}|v|_{\mathcal P}\,\cosp\theta$.
\end{itemize}
From this perspective, together with the viewpoint developed in the next
section---namely, that DA geometry arises as a \emph{parabolic linear
degeneration limit}---it is natural to regard the linear approximation
\[
e^{i\theta}=\cos\theta + i\sin\theta
\qquad\leadsto\qquad
e^{i\theta} \approx 1 + i\theta
\quad(\theta\to 0)
\]
as the relevant guiding principle.
On the other hand, in DA geometry a straight angle corresponds to $0$, and it is
natural to treat the angle quantity linearly both when $\theta$ is near $0$ and
when it is near $\pi$.
In other words, both $\theta$ and $(\theta-\pi)$ play the role of ``small''
quantities.

Based on this guiding principle, we define parabolic trigonometric functions and
justify the definition by deriving analogues of classical statements in Euclidean
geometry.
\end{quote}

\subsection{Definitions and basic structure}

\begin{definition}[Parabolic trigonometric functions]\label{def:parabolic-trig}
For a difference angle $\theta(\neq 0)$, define
\[
\cosp\theta := \operatorname{sgn}(\theta),\qquad
\sinp\theta := \theta.
\]
We call $\cosp$ the \emph{parabolic cosine} and $\sinp$ the \emph{parabolic sine}.
Then $\sinp$ is an odd function, $\cosp$ is an odd function, and $\cosp^2\theta=1$.
Here $(\cosp\theta)^2$ is understood as $\cosp^2\theta$ in the usual sense.
\end{definition}

\begin{remark}[The value at $\theta=0$]
For the value of $\cosp 0$, it may be natural to set $\cosp 0=0$.
However, since this value does not appear in the arguments of the present paper,
we leave it unspecified for the time being.
\end{remark}

\begin{remark}[Embedding into a parabola parameter]\label{rem:embedding-parabola}
To a difference angle $\theta$ we associate the point
\[
\iota(\theta):=\left(\frac{\theta}{\kappa},\frac{\theta^2}{\kappa}\right)\in\{y=\kappa x^2\}.
\]
Then the slope of the segment $OP$ equals $\theta$.
\end{remark}

From the definition, $\cosp\theta$ depends only on whether $\theta$ is a positive
or a negative difference angle (hence it is determined as $\pm 1$), while
$\sinp\theta$ is fully linearized. In particular, the following properties hold.

\begin{proposition}\label{prop:parabolic-trig-basic}
The functions $\cosp\theta$ and $\sinp\theta$ satisfy:
\begin{itemize}
\item $\cosp^2\theta =1$ (in particular, $\cosp\theta\in\{\pm1\}$),
\item for $k\in \Z^{*}$, $\cosp(k\theta)=\cosp(\theta)$,
\item for $k\in \Z^{*}$, $\sinp(k\theta)=k\,\sinp(\theta)$.
\end{itemize}
\end{proposition}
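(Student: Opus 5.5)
The plan is to verify each of the three bullet points directly from \Cref{def:parabolic-trig}, using only the elementary properties of the sign function $\operatorname{sgn}$ on $\R\setminus\{0\}$ and the linearity of the identity map. No geometric input is needed. The only earlier facts used are that a difference angle is a real number (a difference of slopes) and that $\theta\neq 0$ is assumed throughout, so that $\cosp\theta=\operatorname{sgn}(\theta)$ is defined.

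First, for $\theta\neq0$ we have $\operatorname{sgn}(\theta)\in\{1,-1\}$. Hence $\cosp\theta\in\{\pm1\}$ and $\cosp^2\theta=\operatorname{sgn}(\theta)^2=1$, which is the first item. Next, for $k\in\Z$ with $k\neq0$, the product $k\theta$ is again a nonzero real number, so $\cosp(k\theta)$ and $\sinp(k\theta)$ are defined. The third item is then immediate: $\sinp(k\theta)=k\theta=k\,\sinp\theta$. In fact this holds for every nonzero real $k$, reflecting the full linearization of $\sinp$.

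For the second item I would use multiplicativity of the sign, $\operatorname{sgn}(k\theta)=\operatorname{sgn}(k)\operatorname{sgn}(\theta)$. This gives $\cosp(k\theta)=\cosp\theta$ exactly when $k>0$.

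This is the one point needing care, and I expect it to be the main obstacle. For negative $k$ the formula yields $\cosp(k\theta)=-\cosp\theta$, consistent with $\cosp$ being odd, as stated in \Cref{def:parabolic-trig}. So the second item must be read with $\Z^{*}$ denoting the positive integers. Alternatively, the general statement is $\cosp(k\theta)=\operatorname{sgn}(k)\cosp\theta$ for all $k\in\Z\setminus\{0\}$, which reduces to the stated form for $k>0$.

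I would present the proof with this convention made explicit. Under either reading, all three items follow in one line each from the definitions. The geometric content is that scaling a difference angle by a positive factor does not change whether it is a positive or a negative difference angle, in the sense of \Cref{prop:two-positive-one-negative}.
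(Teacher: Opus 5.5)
Your proof is correct and follows the paper's approach: the paper gives no separate argument and reads all three items directly off \Cref{def:parabolic-trig}, just as you do. Your caveat on the second item is also right. Since $\operatorname{sgn}(k\theta)=\operatorname{sgn}(k)\operatorname{sgn}(\theta)$, taking $k=-1$ would give $\cosp(-\theta)=\cosp\theta$. This contradicts the oddness $\cosp(-\theta)=-\cosp\theta$ asserted in the definition and used in the proof of \Cref{prop:first-cosine-law}. So that item holds only for $k>0$, or in the general form $\cosp(k\theta)=\operatorname{sgn}(k)\cosp\theta$.
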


\begin{figure}[H]
  \centering
  \begin{minipage}{0.60\textwidth}
    \centering
    \includegraphics[width=\linewidth]{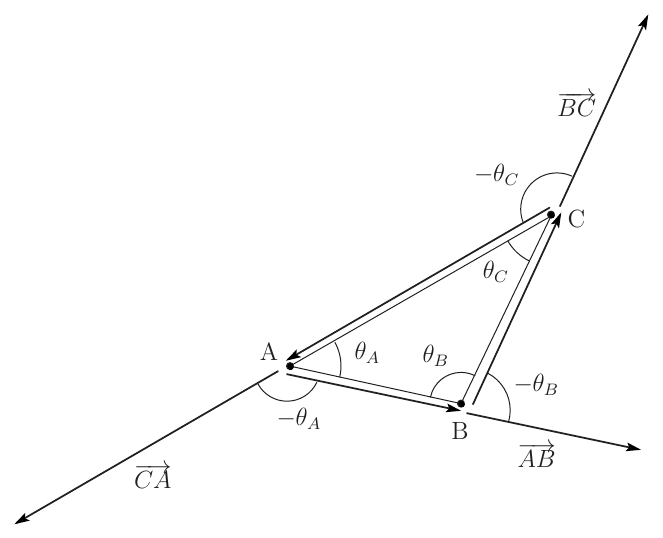}
  \end{minipage}
  \caption{Configuration for the proof of the first cosine law in DA geometry.}
  \label{fig:first-cosp-formula}
\end{figure}

\begin{proposition}[First cosine law]\label{prop:first-cosine-law}
Let $\Ptri ABC$ be a DA triangle, and set
$a=|BC|_{\mathcal{P}}$, $b=|CA|_{\mathcal{P}}$, $c=|AB|_{\mathcal{P}}$,
$\Pangle BAC=\theta_A$, $\Pangle CBA=\theta_B$, and $\Pangle ACB=\theta_C$.
Then
\[
a=b\cosp \theta_C+c\cosp \theta_B,\qquad
b=c\cosp \theta_A+a\cosp \theta_C,\qquad
c=a\cosp \theta_B+b\cosp \theta_A.
\]
\end{proposition}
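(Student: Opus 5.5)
We argue in coordinates, in the same normalization used for \Cref{prop:two-positive-one-negative}. The three identities are permutations of one another, so it is enough to treat one ordering of the vertices and then say how the others follow. As the relabelling will show, each identity is really the "triangle equality" of the one-dimensional DA norm with the correct signs attached.

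First we put the vertices in increasing order of $x$--coordinate, $x_A<x_B<x_C$; a relabelling reduces the general case to this one. In this ordering $a=x_C-x_B$, $b=x_C-x_A$ and $c=x_B-x_A$, all positive, and $b=a+c$. By \Cref{prop:two-positive-one-negative} and its proof, $\theta_A>0$, $\theta_B<0$ and $\theta_C>0$. The reason is that the sign of each interior difference angle equals the sign of the product of the $x$--differences from its vertex. Hence $\cosp\theta_A=\cosp\theta_C=1$ and $\cosp\theta_B=-1$.

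Next we substitute these values into the three identities:
\[
a=b-c,\qquad b=c+a,\qquad c=-a+b .
\]
Each of these is equivalent to $b=a+c$, so all three hold in this ordering.

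For an arbitrary triangle, we note that both sides of the system change consistently when the vertices are permuted. The labels $a,b,c$ and $\theta_A,\theta_B,\theta_C$ move together with the vertices, so the set of three identities is carried to itself. The sign data is also intrinsic: the negative angle always sits at the vertex whose $x$--coordinate lies in the middle, and the side opposite it is the largest, equal to the sum of the other two. So after relabelling we are always in the case just checked. Equivalently, we can write all three identities as $\ell_{X}=\sum \pm$, where the minus sign goes with the middle vertex.

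The main obstacle is the sign bookkeeping, namely confirming that "$\cosp$ of the angle at the middle vertex is $-1$" is independent of orientation. This requires that $\Pangle$ is defined with the sign conventions of Base~1 for a triangle, not as an oriented angle. I would first check this directly from the formula in the proof of \Cref{prop:two-positive-one-negative}: the sign of the angle at vertex $Y$ is the sign of $(x_X-x_Y)(x_Z-x_Y)$, which is symmetric in $X$ and $Z$. This symmetry settles the orientation question. The degenerate case where two $x$--coordinates coincide is excluded, since $\Ptri ABC$ is a DA triangle.
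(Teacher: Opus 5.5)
Your proof is correct, but it takes a different route from the paper's. The paper follows the Euclidean template. It pairs $\vec{AB}+\vec{BC}+\vec{CA}=0$ with $\vec{AB}$ under the DA inner product. It then evaluates $\langle\vec{BC},\vec{AB}\rangle_{\mathcal P}=ac\cosp(-\theta_B)$ and $\langle\vec{CA},\vec{AB}\rangle_{\mathcal P}=bc\cosp(-\theta_A)$ using the rule $\langle u,v\rangle_{\mathcal P}=|u|_{\mathcal P}|v|_{\mathcal P}\cosp\theta$ and the oddness of $\cosp$, and divides $c^2=ac\cosp\theta_B+bc\cosp\theta_A$ by $c$.

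You skip the inner product and check the identities directly in the coordinate model. This shows that the law is just the DA triangle equality $b=a+c$ combined with the sign pattern $(+,-,+)$. The paper's route conveys the structural point it wants, namely that the polarization inner product reproduces the cosine law as in Euclidean geometry. However, the rule it invokes is stated as a design requirement, not proved. For angles at a vertex it is equivalent to the fact you isolate: $\langle\vec{YX},\vec{YZ}\rangle_{\mathcal P}=(x_X-x_Y)(x_Z-x_Y)$ equals $|YX|_{\mathcal P}|YZ|_{\mathcal P}\cosp\theta_Y$ exactly when $\operatorname{sgn}\theta_Y=\operatorname{sgn}\bigl((x_X-x_Y)(x_Z-x_Y)\bigr)$.

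Your orientation caveat is therefore substantive. Take the oriented cyclic convention $\Pangle XYZ=\Slp{YZ}-\Slp{YX}$, which reproduces the paper's values $\theta_A=\kappa(x_C-x_B)$, $\theta_B=\kappa(x_A-x_C)$, $\theta_C=\kappa(x_B-x_A)$ on $y=\kappa x^2$. Under it, the clockwise triangle $A=(0,0)$, $B=(1,1)$, $C=(2,0)$ gets $\theta_A=\theta_C=-1$ and $\theta_B=2$, and the first identity would read $1=-2+1$. So both proofs rest on the symmetric sign rule asserted in \Cref{prop:two-positive-one-negative}.

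Granting that rule, you can drop the relabelling step. Write $\ell_X$ for the DA length of the side opposite $X$. Then for every vertex $X$, $\ell_Y\cosp\theta_Z+\ell_Z\cosp\theta_Y=\operatorname{sgn}(x_Y-x_Z)\bigl[(x_X-x_Z)-(x_X-x_Y)\bigr]=|x_Y-x_Z|=\ell_X$, which gives all three identities at once.
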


\begin{proof}
As in the Euclidean proof of the first cosine law, take the DA inner product of
the relation $\vec{AB}+\vec{BC}+\vec{CA}=0$ with $\vec{AB}$ (and similarly with
$\vec{BC}$ and $\vec{CA}$).
From
\[
\vec{AB}=-\vec{BC}-\vec{CA}
\]
we obtain
\[
\langle \vec{AB},\vec{AB}\rangle_{\mathcal P}
=-\langle \vec{BC},\vec{AB}\rangle_{\mathcal P}
 -\langle \vec{CA},\vec{AB}\rangle_{\mathcal P}.
\]
Hence,
\[
c^2
=- ac\,\cosp(-\theta_B)- bc\,\cosp(-\theta_A)
\iff
c=a\cosp \theta_B+ b\cosp\theta_A,
\]
where we used $\cosp(-\theta)=-\cosp(\theta)$.
The other identities follow in the same way.
\end{proof}

\begin{remark}[The second cosine law is absorbed]\label{rem:second-cosine-absorbed}
As in the Euclidean proof of the second cosine law, for a DA triangle $\Ptri ABC$
we have
\[
\langle \vec{AB},\vec{AB}\rangle_{\mathcal P}
=\langle -\vec{BC}-\vec{CA},\, -\vec{BC}-\vec{CA}\rangle_{\mathcal P},
\]
which yields the ``second cosine law''
\[
a^2=b^2+c^2-2bc\cosp \theta_A.
\]
However, under the triangle equality in DA geometry, this identity reduces to
the first cosine law itself.
\end{remark}

\begin{remark}\label{rem:asa-sas-comment}
The first cosine law may be viewed as reflecting the influence of ASA-type
congruence information, while the second cosine law is typically tied to
SAS-type information.
In the present setting, the second cosine law is absorbed into the first, which
suggests that ASA-type information plays a more essential role in DA geometry.

On the other hand, if one formally considers ``linearized trigonometric
functions'' such as
\[
\cos^\ast\theta = 1,\qquad \sin^\ast\theta = \theta,
\]
then the triangle equality and cosine laws degenerate and do not provide a rich
geometric structure for general triangles.
By contrast, the definitions of $\cosp$ and $\sinp$ in
\Cref{def:parabolic-trig} are compatible not only with the DA inner product and
the DA norm, but also naturally lead to several distinctive identities for
parabolic triangles.
We postpone the details to the next section, where it will become clear that
these linearized parabolic trigonometric functions interact remarkably well
with the fundamental quadratic structure underlying DA geometry.
\end{remark}

\subsection{Justification of parabolic trigonometric functions}

\begin{quote}
There are various parametrizations of pairs $(x,y)$ satisfying the relation
$y=\kappa x^2$, and one could in principle attach the names $\cosp$ and $\sinp$
to any of them.
On the other hand, the functions $\cosp$ and $\sinp$ defined in
\Cref{def:parabolic-trig} are directly tied to the one-dimensional projective
structure of the parabola as the basic figure, and they most straightforwardly
reflect the linearity peculiar to DA geometry.
In this subsection, we give one concrete justification of the parabolic
trigonometric functions defined in the previous subsection, via the Brocard
angle.

The main theorem treated here was not discussed for parabolas even in the work
of Weiss--Odehnal; it is obtained by reinterpreting their considerations within
the framework of DA geometry.
\end{quote}

\begin{figure}[htbp]
  \centering
  \begin{minipage}{0.40\textwidth}
    \centering
    \includegraphics[width=\linewidth]{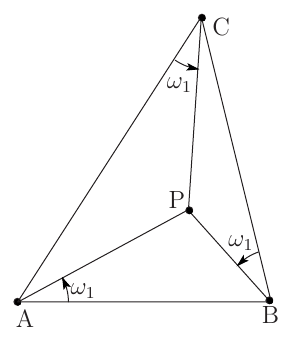}
    \subcaption{}
    \label{fig:brocard-in-Euclid}
  \end{minipage}
  \begin{minipage}{0.40\textwidth}
    \centering
    \includegraphics[width=\linewidth]{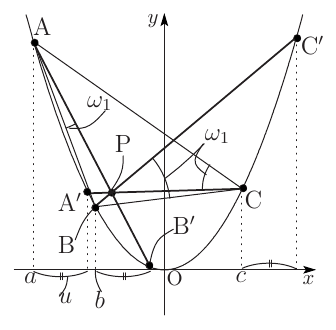}
    \subcaption{}
    \label{fig:brocard-in-DA}
  \end{minipage}
 \caption{Brocard's theorem in Euclidean and DA geometries.
(a) The Brocard point and Brocard angle in Euclidean geometry.
(b) The Brocard point and Brocard angle in DA geometry.}
\end{figure}

We begin by recalling the following theorem in Euclidean geometry.
Since it is classical, we omit the proof.

\begin{proposition}[Brocard's theorem]
Let $ABC$ be a triangle in the Euclidean plane.
There exists a unique interior point $P_1$ such that, for the Euclidean angle
$\angle$,
\[
\angle P_1BC=\angle P_1CA=\angle P_1AB=\omega_1.
\]
The point $P_1$ is called the \emph{first Brocard point}, and $\omega_1$ is
called the \emph{first Brocard angle}.
Similarly, there exists a unique interior point $P_2$ such that
\[
\angle P_2CB=\angle P_2AC=\angle P_2BA=\omega_2.
\]
The point $P_2$ is called the \emph{second Brocard point}, and $\omega_2$ is
called the \emph{second Brocard angle}.
\end{proposition}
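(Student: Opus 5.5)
We prove the classical Brocard theorem for a Euclidean triangle $ABC$; the paper omits this proof as standard. The plan is to build the first Brocard point from two auxiliary circles, show the angle condition, and then prove uniqueness. The second Brocard point follows by applying the argument to the reflected triangle (equivalently, reversing orientation).

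\textbf{Existence.} Let $\Gamma_1$ be the circle through $A$ and $B$ tangent to line $CA$ at $A$, and $\Gamma_2$ the circle through $B$ and $C$ tangent to line $AB$ at $B$.

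Let $P_1$ be the second intersection of $\Gamma_1$ and $\Gamma_2$. Each circle meets the interior of the triangle in an arc, and the two arcs cross inside the triangle, so $P_1$ is interior.

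By the tangent--chord angle on $\Gamma_1$, $\angle P_1AC = \angle P_1BA$. By the same property on $\Gamma_2$, $\angle P_1BA = \angle P_1CB$.

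Interpreting these with the correct orientation gives $\angle P_1AB=\angle P_1BC=\angle P_1CA$. To be safe I would set up the tangency conditions directly in the oriented form $\angle P_1AB=\angle P_1BC$ and $\angle P_1BC=\angle P_1CA$. The locus of $P$ satisfying $\angle PAB=\angle PBC$ is a circle through $A$ and $B$, namely the circle through $B$ tangent to $BC$ at $B$.

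\textbf{Uniqueness.} Suppose $P$ is interior with all three angles equal to $\omega$. Then $\angle PBA=\angle B-\omega$, so $\angle APB=\pi-\omega-(\angle B-\omega)=\pi-\angle B$.

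Hence $P$ lies on the arc over $AB$ on which $AB$ subtends $\pi-\angle B$, the arc on the interior side of $AB$. Likewise $P$ lies on the analogous arc over $BC$ on which $BC$ subtends $\pi-\angle C$.

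Two distinct circles through $B$ meet in at most one further point, so $P$ is unique. Reversing the computation shows that the intersection of these arcs does satisfy all three equalities. This gives an alternative existence proof: the third relation $\angle CPA=\pi-\angle A$ follows automatically because the three angles at $P$ sum to $2\pi$.

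The main obstacle is showing that the two arcs actually meet inside the triangle. I would handle it with trigonometric Ceva. Set $\omega$ by $\cot\omega=\cot A+\cot B+\cot C$, which has a solution with $0<\omega<\min(A,B,C)$ because $\cot\omega>\cot A$ and similarly for $B$ and $C$.

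The cevians making angle $\omega$ at each vertex are concurrent exactly when
\[
\sin^3\omega=\sin(A-\omega)\sin(B-\omega)\sin(C-\omega).
\]
This is equivalent to the cotangent relation after expanding with $A+B+C=\pi$. Concurrency then yields an interior point directly, and the case of $P_2$ is symmetric.
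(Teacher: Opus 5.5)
The paper gives no proof of this statement; it is quoted as classical and the proof is omitted, so there is no route to compare against. Judged on its own, your final argument is a correct proof: trigonometric Ceva for existence, and the two arcs $\angle APB=\pi-B$ and $\angle BPC=\pi-C$ for uniqueness.

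The first existence paragraph, however, is wrong and should be deleted rather than ``reinterpreted''. By the tangent--chord theorem, an interior point $P$ of $\Gamma_1$ satisfies $\angle PAC=\angle PBA$, and one of $\Gamma_2$ satisfies $\angle PBA=\angle PCB$. So the second intersection of $\Gamma_1$ and $\Gamma_2$ satisfies $\angle PAC=\angle PBA=\angle PCB$, which is the defining condition of the \emph{second} Brocard point. No choice of orientation turns this into $\angle PAB=\angle PBC=\angle PCA$. An interior point satisfying both triples, with common values $\omega$ and $\omega'$, would have $A=B=C=\omega+\omega'$. The two conditions therefore pick out the same point only for an equilateral triangle. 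The circles that do give $P_1$ are the one through $A,B$ tangent to $BC$ at $B$ (which you name afterwards) and the one through $B,C$ tangent to $CA$ at $C$. These are exactly the two arcs of your uniqueness step. The claim that the arcs ``cross inside the triangle'' is also unsupported in that paragraph. That is precisely what the Ceva step must supply, so existence should rest on Ceva alone.

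Several steps you assert need one line each:
\begin{enumerate}
\item The bound $\omega<A$ uses $\cot B+\cot C=\sin A/(\sin B\sin C)>0$.
\item The Ceva identity follows from $\sin(X-\omega)/\sin\omega=\sin X\,(\cot\omega-\cot X)$. With $x=\cot\omega$, concurrency reads $\sin A\sin B\sin C\,(x-\cot A)(x-\cot B)(x-\cot C)=1$. At $x=\cot A+\cot B+\cot C$ each factor is $x-\cot A=\sin A/(\sin B\sin C)$ (and cyclically), so the product is exactly $1$.
\item In the uniqueness step, the two circles are distinct. A common circle would be the circumcircle, on which $AB$ subtends $C\neq\pi-B$.
\item For $P_2$, note that with unoriented angles a reflection preserves the first-Brocard condition. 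What you need is the relabeling $B\leftrightarrow C$, which turns $\angle PAB=\angle PBC=\angle PCA$ into $\angle PAC=\angle PCB=\angle PBA$.
\end{enumerate}

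The Ceva equation also gives uniqueness by itself. Its left side is strictly increasing for $x>\max(\cot A,\cot B,\cot C)$, i.e.\ for $0<\omega<\min(A,B,C)$, so $\omega$ is unique, and hence so is $P_1$. This parallels the paper's DA Brocard theorem, where existence and uniqueness come together from a single \emph{linear} equation for $u$. Your defining relation $\cot\omega=\cot A+\cot B+\cot C$ is the identity the paper later carries over to DA geometry with its parabolic cotangent.
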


The Brocard angle $\omega(=\omega_1,\omega_2)$ admits expressions in terms of
side lengths and area.
Let $S$ be the area of $\triangle ABC$, and let $|BC|=a$, $|CA|=b$, $|AB|=c$.
Then
\begin{align*}
\tan \omega&= \frac{4S}{a^2+b^2+c^2},\\
\sin \omega&=\frac{2S}{\sqrt{a^2b^2+b^2c^2+c^2a^2}},\\
\cot \omega&= \cot \angle A+\cot \angle B+\cot \angle C.
\end{align*}
We will show that Brocard's theorem also holds in DA geometry, and then deduce
that the expression in terms of $\cot$ is preserved.

\begin{maintheorem}[DA Brocard theorem (existence and uniqueness)]\label{mthm:da-brocard}
Let $\Ptri ABC$ be a DA triangle whose circumparabola is $y=\kappa x^2$ with
$\kappa>0$.
Then there exists a unique point $P_1$ such that
\[
\Pangle P_1BC=\Pangle P_1CA=\Pangle P_1AB=\omega_1.
\]
Moreover, there exists a unique point $P_2$ such that
\[
\Pangle P_2CB=\Pangle P_2AC=\Pangle P_2BA=\omega_2.
\]
\end{maintheorem}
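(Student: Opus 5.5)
The plan is to reduce everything to coordinates on the circumparabola and turn the three Brocard conditions into a linear system. Write the vertices as $A=(a,\kappa a^2)$, $B=(b,\kappa b^2)$, $C=(c,\kappa c^2)$ with $a,b,c$ pairwise distinct, and note that the chord slopes are $\Slp{AB}=\kappa(a+b)$, $\Slp{BC}=\kappa(b+c)$, $\Slp{CA}=\kappa(c+a)$. A difference angle at a vertex is a difference of the two slopes there, with the orientation convention of Base~1. So, up to one global sign that can be fixed once for all three conditions, the requirement $\Pangle P_1BC=\Pangle P_1CA=\Pangle P_1AB=\omega_1$ says that the lines $BP_1$, $CP_1$, $AP_1$ have slopes
\[
\kappa(b+c)-\omega_1,\qquad \kappa(c+a)-\omega_1,\qquad \kappa(a+b)-\omega_1,
\]
respectively.

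Existence and uniqueness of $P_1$ then amounts to a concurrency statement. For some $\omega_1$, the three lines through $B$, $C$, $A$ with these slopes must meet in one point $P_1=(u,v)$. The unknowns are $u$, $v$, $\omega_1$. Each incidence condition has the form
\[
v=\kappa b^2+\bigl(\kappa(b+c)-\omega_1\bigr)(u-b),
\]
together with its cyclic analogues. These are not linear because of the product $\omega_1 u$. However, $\omega_1 u$ appears with the same coefficient in all three, and so does $v$. Subtracting the $B$- and $C$-equations cancels both $v$ and $\omega_1u$. After simplification this should give
\[
\kappa(a-b)\,u+(c-b)(\omega_1-\kappa a)=0,
\]
and the cyclically permuted pair gives $\kappa(b-c)\,u+(a-c)(\omega_1-\kappa b)=0$. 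The third difference is the negative of the sum of the first two, so it is automatic.

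The result is a $2\times 2$ linear system in $(u,\omega_1)$. The key step, and the only real obstacle, is showing that its determinant never vanishes. I expect it to equal
\[
\kappa\bigl[(a-b)(a-c)+(b-c)^2\bigr]=\tfrac{\kappa}{2}\bigl[(a-b)^2+(b-c)^2+(c-a)^2\bigr],
\]
which is strictly positive because $\kappa>0$ and the $x$--coordinates of a DA triangle are distinct. This gives a unique $(u,\omega_1)$, and then $v$ is determined uniquely by any one incidence equation. Conversely, the common point so obtained satisfies all three angle conditions. I will still have to check two things carefully. First, the sign convention for $\Pangle$ must really be uniform across the three conditions, which requires matching the orientation of the vertex orders $P_1BC$, $P_1CA$, $P_1AB$ against Base~1. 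Second, none of the lines $AP_1$, $BP_1$, $CP_1$ may be singular, i.e.\ $u\neq a,b,c$; otherwise the angles would be undefined. I would verify the second point directly from the explicit solution for $u$.

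For $P_2$ I would not redo the computation. Instead I would apply the reflection $(x,y)\mapsto(-x,y)$. It preserves $y=\kappa x^2$, sends slopes to their negatives, and therefore reverses the orientation of difference angles. Under this map the conditions $\Pangle P_2CB=\Pangle P_2AC=\Pangle P_2BA$ turn into first-Brocard conditions for the reflected triangle. Existence and uniqueness of $P_2$ then follow from the $P_1$ case, with $\omega_2$ obtained from the reflected $\omega_1$. Alternatively, the same elimination with the roles of the two chords at each vertex swapped gives a determinant of the same sum-of-squares form.
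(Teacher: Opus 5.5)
Your approach is essentially the paper's. The paper fixes $\omega$ and realizes the three lines as chords $BC'$, $CA'$, $AB'$ to the vertices shifted by $\omega/\kappa$ along $y=\kappa x^2$. It computes the $x$-coordinates of $BC'\cap CA'$ and $CA'\cap AB'$, each affine in $\omega$, and equates them. Those two relations are exactly your $B$-minus-$C$ and $C$-minus-$A$ difference equations, and the paper's denominator $a^2+b^2+c^2-ab-bc-ca$ is your determinant divided by $\kappa$.

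One correction: your two displayed equations have the right coefficients of $u$ and $\omega_1$ but wrong constant terms. With your sign convention the differences are $\kappa(b-a)(u-c)+(b-c)\omega_1=0$ and $\kappa(c-b)(u-a)+(c-a)\omega_1=0$. For instance, with $a=0$, $b=1$, $c=2$, $\kappa=1$ the true solution is $u=4/3$, $\omega_1=-2/3$, and it does not satisfy your version. The determinant, and hence existence and uniqueness, is unaffected. The nondegeneracy check, however, needs the corrected constants:
\begin{itemize}
\item $u=c$ or $u=a$ would force $\omega_1=0$. That is impossible, since $\omega_1=0$ would give both $u=c$ and $u=a$.
\item $u=b$ forces $\omega_1=\kappa(a-b)$ from the first equation, and then $-\kappa(a-b)^2=0$ from the second.
\end{itemize}

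For $P_2$, the reflection $(x,y)\mapsto(-x,y)$ by itself only negates all three angles and keeps the label pattern. What converts the second-Brocard conditions into first-Brocard ones is a relabeling: swap $B$ and $C$, or swap $A$ and $C$ after reflecting to restore the $x$-order. Your elimination never uses $a<b<c$, so it suffices to apply the $P_1$ result to the triangle labeled $A,C,B$. Your solution is $\omega_1=-\kappa(b-a)(c-b)(c-a)/(a^2+b^2+c^2-ab-bc-ca)$, whose numerator is antisymmetric under $b\leftrightarrow c$, so $\omega_2=-\omega_1$. This relabeling is the actual content behind the paper's one-line ``replace $u$ by $-u$''.
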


\begin{proof}
To treat the three equal-angle conditions, write $x_A=a$, $x_B=b$, $x_C=c$
with $a<b<c$.
Fix $\omega\neq 0$, and consider a point $X$ satisfying
\[
\Pangle XBC=\Pangle XCA=\omega,
\]
and a point $Y$ satisfying
\[
\Pangle YCA=\Pangle YAB=\omega.
\]
We determine $\omega$ so that $X=Y$.

Put $u=\dfrac{\omega}{\kappa}$, and choose points $A',B',C'$ on the parabola
$y=\kappa x^2$ by
\[
x_{A'}=a+u,\quad x_{B'}=b+u,\quad x_{C'}=c+u.
\]
By the definition of the difference angle, the condition that $X$ is the
intersection point of the lines $BC'$ and $CA'$ is equivalent to
\[
\Pangle XBC=\Pangle XCA=\omega,
\]
and similarly, the condition that $Y$ is the intersection point of the lines
$CA'$ and $AB'$ is equivalent to
\[
\Pangle YCA=\Pangle YAB=\omega.
\]

The equations of the lines $BC'$, $CA'$, and $AB'$ are
\begin{align*}
BC':\quad y&=\kappa(b+c+u)x-\kappa b(c+u),\\
CA':\quad y&=\kappa(c+a+u)x-\kappa c(a+u),\\
AB':\quad y&=\kappa(a+b+u)x-\kappa a(b+u).
\end{align*}
Hence, from
\[
\kappa(b+c+u)x_X-\kappa b(c+u)=\kappa(c+a+u)x_X-\kappa c(a+u)
\]
we obtain
\[
x_X=c+\frac{b-c}{b-a}\,u.
\]
Similarly, from
\[
\kappa(c+a+u)x_Y-\kappa c(a+u)=\kappa(a+b+u)x_Y-\kappa a(b+u)
\]
we obtain
\[
x_Y=a+\frac{c-a}{c-b}\,u.
\]
Therefore, the condition $x_X=x_Y$ (equivalently, $X=Y$) is
\begin{align}
c+\frac{b-c}{b-a}u=a+\frac{c-a}{c-b}u
&\iff
\frac{(c-a)(b-a)-(b-c)(c-b)}{(b-a)(c-b)}u=c-a \notag\\
&\iff
u=\frac{(b-a)(c-b)(c-a)}{(c-a)(b-a)-(b-c)(c-b)} \notag\\
&\iff
u=\frac{(b-a)(c-b)(c-a)}{a^2+b^2+c^2-ab-bc-ca}.
\label{eq:brocard-u}
\end{align}
Thus \eqref{eq:brocard-u} determines $u$ uniquely (hence $\omega=\kappa u$ is
unique).
For this $u$, the lines $BC'$, $CA'$, and $AB'$ indeed determine points $X$ and
$Y$, and since $x_X=x_Y$ we have $X=Y$.
Denoting this point by $P_1$ proves the first assertion.

For $\omega_2$, it suffices to repeat the above construction with $u$ replaced
by $-u$.
In particular, $\omega_1$ and $\omega_2$ are isogonal conjugates in this sense.

Finally, note that
\[
a^2+b^2+c^2-ab-bc-ca
=\tfrac12\bigl((a-b)^2+(b-c)^2+(c-a)^2\bigr)>0.
\]
\end{proof}

\begin{definition}
The point $P_1$ in \Cref{mthm:da-brocard} is called the \emph{first Brocard
point}, and $\omega_1$ is called the \emph{first Brocard angle}.
Similarly, $P_2$ is called the \emph{second Brocard point}, and $\omega_2$ is
called the \emph{second Brocard angle}.
\end{definition}

As in Euclidean geometry, the Brocard angle $\omega=\omega_1,\omega_2$ in DA
geometry is also related to area.
We next state the required facts about the area of a DA triangle.
As in the previous chapter, we postpone a deeper discussion to a subsequent
paper and present only what is needed here.
The proof follows immediately from the cross product of
$\overrightarrow{AB}$ and $\overrightarrow{AC}$.

\begin{lemma}\label{lem:da-triangle-area}
Let $\Ptri ABC$ be a DA triangle whose circumparabola is $y=\kappa x^2$ with
$\kappa>0$.
Assume $x_A=a$, $x_B=b$, $x_C=c$ with $a<b<c$.
If $S_{\mathcal P}$ denotes the area of $\Ptri ABC$, then
\[
S_{\mathcal P}=\frac{\kappa}{2}(b-a)(c-b)(c-a).
\]
\end{lemma}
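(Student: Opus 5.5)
The plan is a direct coordinate computation with the ordinary planar cross product, as the text preceding the lemma suggests. Since the triangle is inscribed in its circumparabola $y=\kappa x^2$, I will write the vertices explicitly as $A=(a,\kappa a^2)$, $B=(b,\kappa b^2)$, $C=(c,\kappa c^2)$. I will take $S_{\mathcal P}$ to be the usual (affine) area $\tfrac12\lvert\overrightarrow{AB}\times\overrightarrow{AC}\rvert$; this is the interpretation the remark before the lemma indicates. The ordering $a<b<c$ will only be used at the end, to fix the sign.

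First, I form the edge vectors
\[
\overrightarrow{AB}=\bigl(b-a,\ \kappa(b^2-a^2)\bigr)=(b-a)\bigl(1,\ \kappa(a+b)\bigr),\qquad
\overrightarrow{AC}=(c-a)\bigl(1,\ \kappa(a+c)\bigr).
\]
Factoring out $(b-a)$ and $(c-a)$ in this way is the key simplification. It reflects the fact that the slope of a chord of $y=\kappa x^2$ is $\kappa$ times the sum of its endpoint abscissae.

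Next, I expand the $2\times2$ determinant:
\[
\overrightarrow{AB}\times\overrightarrow{AC}=(b-a)(c-a)\bigl(\kappa(a+c)-\kappa(a+b)\bigr)=\kappa(b-a)(c-a)(c-b).
\]
Under $a<b<c$ and $\kappa>0$, every factor is positive. Hence the absolute value can be dropped, and halving gives $S_{\mathcal P}=\tfrac{\kappa}{2}(b-a)(c-b)(c-a)$.

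I expect no step to present a real obstacle. The only point needing care is the meaning of $S_{\mathcal P}$. If the area of a DA triangle were instead meant as a DA quantity, for example $\tfrac12|AB|_{\mathcal P}|AC|_{\mathcal P}\sinp\theta_A$, I would check consistency directly. With $|AB|_{\mathcal P}=b-a$, $|AC|_{\mathcal P}=c-a$ and $\theta_A=\Slp{AC}-\Slp{AB}=\kappa(c-b)$, this expression gives the same value. That agreement also supports the choice $\sinp\theta=\theta$.
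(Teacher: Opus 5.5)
Your proposal is correct and follows the paper's own route: the paper only notes that the lemma "follows immediately from the cross product of $\overrightarrow{AB}$ and $\overrightarrow{AC}$," which is exactly the computation you carry out. Factoring out $(b-a)$ and $(c-a)$ before taking the determinant makes that one-line justification explicit, and your consistency check with $\tfrac12|AB|_{\mathcal P}|AC|_{\mathcal P}\sinp\theta_A$ matches the paper's value $\theta_A=\kappa(c-b)$.
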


Next we define a parabolic tangent.

\begin{definition}[Parabolic tangent]\label{def:parabolic-tangent}
For the point $P=\iota(\theta)$ given by the embedding
$\iota(\theta)=(\theta/\kappa,\theta^2/\kappa)$, define
\[
\tanp\theta:=\Slp(OP)=\theta\ (\theta\neq 0),\qquad \tanp0:=0.
\]
\end{definition}

\begin{remark}\label{rem:tanp-not-ratio}
Since $\cosp\theta\in\{\pm1\}$ and $\cosp\theta$ and $\sinp\theta$ have the same
sign, the ratio $\sinp\theta/\cosp\theta$ does not reflect geometric
information (here, a slope).
For this reason, $\tanp$ is defined not as a ratio but as the slope induced by
the embedding $\iota$.
With this definition, $\tanp\theta=\theta$ is naturally defined for all
$\theta\in\R$ and functions as a primary first-order quantity in DA geometry.
\end{remark}

Combining this definition of $\tanp$ with \Cref{lem:da-triangle-area} and
\eqref{eq:brocard-u}, we obtain the following relations between the Brocard
angle and the side lengths.

\begin{proposition}[Parabolic trigonometric expressions for the Brocard angle]\label{prop:parabolic-brocard-trig}
Let $S_{\mathcal P}$ be the area of $\Ptri ABC$.
Then the angle $\omega=\omega_1$ in \Cref{mthm:da-brocard} satisfies
\begin{align*}
\tanp\omega&=\frac{4S_{\mathcal{P}}}{{|AB|_{\mathcal{P}}}^2+{|BC|_{\mathcal{P}}}^2+{|CA|_{\mathcal{P}}}^2},\\
\sinp\omega&=\frac{2S_{\mathcal{P}}}{\sqrt{{|CA|_{\mathcal{P}}}^2{|AB|_{\mathcal{P}}}^2+{|BC|_{\mathcal{P}}}^2{|AB|_{\mathcal{P}}}^2+{|BC|_{\mathcal{P}}}^2{|CA|_{\mathcal{P}}}^2}}.
\end{align*}
\end{proposition}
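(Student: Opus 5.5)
Both identities reduce to direct computations in the normalized coordinates of \Cref{mthm:da-brocard}. We place $x_A=a<x_B=b<x_C=c$ on the circumparabola $y=\kappa x^2$. In the standard model the difference--angle norm is the $x$--coordinate difference, so $|AB|_{\mathcal P}=b-a$, $|BC|_{\mathcal P}=c-b$ and $|CA|_{\mathcal P}=c-a$. We will combine three inputs: $\tanp\omega=\sinp\omega=\omega$ from \Cref{def:parabolic-tangent} and \Cref{def:parabolic-trig}; $\omega=\kappa u$ with $u$ given by \eqref{eq:brocard-u}; and the area formula $S_{\mathcal P}=\frac{\kappa}{2}(b-a)(c-b)(c-a)$ of \Cref{lem:da-triangle-area}.

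For the tangent identity, we substitute into the right-hand side:
\[
\frac{4S_{\mathcal P}}{(b-a)^2+(c-b)^2+(c-a)^2}
=\frac{2\kappa(b-a)(c-b)(c-a)}{2\,(a^2+b^2+c^2-ab-bc-ca)}.
\]
Here we use $(b-a)^2+(c-b)^2+(c-a)^2=2(a^2+b^2+c^2-ab-bc-ca)$, the identity recorded at the end of the proof of \Cref{mthm:da-brocard}. The result equals $\kappa u=\omega=\tanp\omega$ by \eqref{eq:brocard-u}, which gives the first identity.

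For the sine identity, we invoke the positive cyclic identity (\Cref{prop:positive-cyclic}). It gives
\[
|AB|_{\mathcal P}^2+|BC|_{\mathcal P}^2+|CA|_{\mathcal P}^2=2\sqrt{|CA|_{\mathcal P}^2|AB|_{\mathcal P}^2+|BC|_{\mathcal P}^2|AB|_{\mathcal P}^2+|BC|_{\mathcal P}^2|CA|_{\mathcal P}^2}.
\]
Substituting this into the denominator of the first identity turns $4S_{\mathcal P}/(\cdots)$ into exactly $2S_{\mathcal P}/\sqrt{\cdots}$. Since $\sinp\omega=\omega=\tanp\omega$, the second identity follows. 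This also explains why the proposition mirrors the Euclidean pair: with $\cosp$ trivial, $\sinp$ and $\tanp$ coincide, and the cyclic identity makes the two Euclidean denominators agree.

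The only real obstacle is consistency of normalization. We must check that \Cref{lem:da-triangle-area} and \eqref{eq:brocard-u} use the same ordering $a<b<c$ and the same sign convention for $\omega_1$, so that $\omega$ comes out positive, matching $S_{\mathcal P}>0$. We must also check that the norm is exactly the $x$--difference with no $\kappa$ factor. If the sign of $\omega_1$ were instead tied to the orientation $u\mapsto -u$ (that is, to $\omega_2$), the identities would hold with signed area. We would note this rather than alter the computation.
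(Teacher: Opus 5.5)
Your proposal is correct and follows essentially the same route as the paper: substitute $S_{\mathcal P}=\frac{\kappa}{2}(b-a)(c-b)(c-a)$ and the $x$--coordinate side lengths, use $(b-a)^2+(c-b)^2+(c-a)^2=2(a^2+b^2+c^2-ab-bc-ca)$ to identify the first right-hand side with $\kappa u=\omega=\tanp\omega$ via \eqref{eq:brocard-u}, and then rewrite the denominator with the positive cyclic identity (\Cref{prop:positive-cyclic}) to get the $\sinp\omega$ formula. The sign consistency you flag is settled by the remark following the proposition: there $u>0$, so $\omega_1=\kappa u>0$, which agrees with $S_{\mathcal P}>0$.
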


\begin{proof}
Since
$S_{\mathcal{P}}=\dfrac{\kappa}{2}(b-a)(c-b)(c-a)$,
$|AB|_{\mathcal{P}}=b-a$,
$|BC|_{\mathcal{P}}=c-b$,
and $|CA|_{\mathcal{P}}=c-a$,
writing $\omega=\kappa u$ we have
\[
\omega=\kappa u
=\frac{\kappa (b-a)(c-b)(c-a)}{a^2+b^2+c^2-ab-bc-ca}
=\frac{2\kappa (b-a)(c-b)(c-a)}{(b-a)^2+(c-b)^2+(a-c)^2}.
\]
Hence
\[
\tanp\omega=\omega
=\frac{\kappa (b-a)(c-b)(c-a)}{a^2+b^2+c^2-ab-bc-ca}
=\frac{2\kappa (b-a)(c-b)(c-a)}{(b-a)^2+(c-b)^2+(a-c)^2}.
\]
On the other hand, by \Cref{prop:positive-cyclic},
\[
(b-a)^2+(c-b)^2+(a-c)^2
=2\sqrt{(c-a)^2(b-a)^2+(c-b)^2(b-a)^2+(c-b)^2(c-a)^2}.
\]
Therefore,
\[
\sinp\omega
=\omega
=\frac{4S_{\mathcal P}}{2\sqrt{(c-a)^2(b-a)^2+(c-b)^2(b-a)^2+(c-b)^2(c-a)^2}},
\]
which matches the claimed two formulas.
For $\omega_2$, the same statements hold with the opposite sign, since
$|\omega_2|=|\omega_1|$.
\end{proof}

\begin{remark}
In \eqref{eq:brocard-u} we have
\[
u=\frac{(b-a)(c-b)(c-a)}{a^2+b^2+c^2-ab-bc-ca}>0,
\]
since both the numerator and the denominator are positive.
Hence
\[
\omega=\kappa u>0\qquad(\kappa>0).
\]
In particular, the first Brocard angle $\omega_1$ is always positive, whereas
the second Brocard angle $\omega_2$ is always negative in the sense of
difference angles.
\end{remark}

\begin{proposition}\label{prop:cotp-brocard}
For a difference angle $\theta(\neq 0)$ define
\[
\cotp\theta=\frac{1}{\tanp\theta}=\frac{1}{\theta}.
\]
Then the angle $\omega=\omega_1$ in \Cref{mthm:da-brocard} satisfies
\[
\cotp\omega=\cotp\theta_A+\cotp\theta_B+\cotp\theta_C.
\]
\end{proposition}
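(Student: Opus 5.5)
The plan is to compute both sides explicitly in the normalized configuration of \Cref{mthm:da-brocard} and compare. Put the circumparabola as $y=\kappa x^2$ with $\kappa>0$, and write $x_A=a$, $x_B=b$, $x_C=c$ with $a<b<c$. By \eqref{eq:brocard-u} we already know
\[
\omega=\kappa u=\frac{\kappa(b-a)(c-b)(c-a)}{a^2+b^2+c^2-ab-bc-ca}.
\]
Since $\tanp\omega=\omega$ by \Cref{def:parabolic-tangent}, the left-hand side is
\[
\cotp\omega=\frac{a^2+b^2+c^2-ab-bc-ca}{\kappa(b-a)(c-b)(c-a)}.
\]

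The next step is to compute the three interior difference angles. A chord of the parabola between $x$--coordinates $s$ and $t$ has slope $\kappa(s+t)$. Hence each interior angle is a difference of two such chord slopes, and the common endpoint cancels. For example, $\Slp{AC}-\Slp{AB}=\kappa(c-b)$. For the signs I will use \Cref{prop:two-positive-one-negative}, which says that $\theta_B$ is the unique negative angle when $a<b<c$. This gives
\[
\theta_A=\kappa(c-b),\qquad \theta_B=-\kappa(c-a),\qquad \theta_C=\kappa(b-a).
\]
As a sanity check, each angle's magnitude is $\kappa$ times the $\mathcal P$--length of the opposite side, consistent with the inscribed-angle constancy used in \Cref{thm:parabolic-power}. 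Fixing these sign and orientation conventions so that they agree with those of Base~1 is the step I expect to be the only real obstacle. If the conventions turned out reversed, all three angles would flip sign together, and I would then check that $\omega$ flips consistently as well.

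The final step is pure algebra. The right-hand side is
\[
\frac1\kappa\Bigl(\frac1{c-b}-\frac1{c-a}+\frac1{b-a}\Bigr)
=\frac{(c-a)(b-a)-(c-b)(b-a)+(c-b)(c-a)}{\kappa(b-a)(c-b)(c-a)}.
\]
The first two terms of the numerator combine to $(b-a)^2$. Adding $(c-b)(c-a)$ then gives $a^2+b^2+c^2-ab-bc-ca$. This matches $\cotp\omega$ computed above, which proves the identity.
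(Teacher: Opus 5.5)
Your proof is essentially the paper's: both take $\omega=\kappa u$ from \eqref{eq:brocard-u}, fix $\theta_A=\kappa(c-b)$, $\theta_B=\kappa(a-c)$, $\theta_C=\kappa(b-a)$ using \Cref{prop:two-positive-one-negative} for the signs exactly as you do, and then check $1/(\kappa u)=1/\theta_A+1/\theta_B+1/\theta_C$ by direct algebra. The only difference is the direction of that algebra: the paper splits $1/(\kappa u)$ into partial fractions, while you combine the right-hand side over the common denominator $\kappa(b-a)(c-b)(c-a)$, and your numerator computation is correct.
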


\begin{proof}
Write $\omega_1=\omega=\kappa u$ with $u>0$.
Then $\cotp\omega=\dfrac{1}{\omega}=\dfrac{1}{\kappa u}$, and
\[
u=\frac{(b-a)(c-b)(c-a)}{a^2+b^2+c^2-ab-bc-ca}
=\frac{2(b-a)(c-b)(c-a)}{(b-a)^2+(c-b)^2+(a-c)^2}.
\]
Thus
\[
\begin{aligned}
\frac{1}{\kappa u}
&=\frac{(b-a)^2+(c-b)^2+(c-a)^2}{2\kappa(b-a)(c-b)(c-a)}\\
&=\frac{1}{2\kappa}\left(
\frac{b-a}{(c-b)(c-a)}
+\frac{c-b}{(b-a)(c-a)}
+\frac{c-a}{(b-a)(c-b)}
\right)\\
&=\frac{1}{2\kappa}\left(
\frac{1}{c-b}-\frac{1}{c-a}
+\frac{1}{b-a}-\frac{1}{c-a}
+\frac{1}{b-a}+\frac{1}{c-b}
\right)\\
&=\frac{1}{\kappa(c-b)}-\frac{1}{\kappa(c-a)}+\frac{1}{\kappa(b-a)}.
\end{aligned}
\]
Since $a<b<c$, by \Cref{prop:two-positive-one-negative} we have $\theta_B<0$, and
\[
\theta_A=\kappa(c-b),\quad \theta_C=\kappa(b-a),\quad \theta_B=\kappa(a-c).
\]
Therefore
\[
\frac{1}{\kappa(c-b)}-\frac{1}{\kappa(c-a)}+\frac{1}{\kappa(b-a)}
=\frac{1}{\theta_A}+\frac{1}{\theta_B}+\frac{1}{\theta_C}.
\]
Hence
\[
\frac{1}{\omega}=\frac{1}{\theta_A}+\frac{1}{\theta_B}+\frac{1}{\theta_C}
\iff
\cotp \omega= \cotp \theta_A+\cotp \theta_B+\cotp \theta_C.
\]
\end{proof}

\begin{remark}
In both \Cref{prop:parabolic-brocard-trig} and \Cref{prop:cotp-brocard}, the
corresponding statements for $\omega=\omega_2$ follow with the opposite sign,
since $\omega_2=-\omega_1$.
\end{remark}

\subsection{Alternating product}\label{sec:alt-prod-and-content}

\begin{quote}
The DA inner product was defined as a bilinear form derived from the
parallelogram identity, and it is given by
\[
\langle u,v\rangle_{\mathcal P}=x_u x_v.
\]
Formally one may write
\(
\langle u,v\rangle_{\mathcal P}
=|u|_{\mathcal P}|v|_{\mathcal P}\,\cosp\theta,
\)
so it is natural to introduce an alternating quantity that corresponds to the
Euclidean exterior product $|u||v|\sin\theta$.
We postpone the discussion of a sine law to a subsequent paper, since it goes
beyond the scope of the present section.
\end{quote}

\begin{definition}[Alternating product associated with the DA inner product]\label{def:alt-prod}
Using the parabolic trigonometric functions $\cosp$ and $\sinp$, define
\[
\Pi_{\mathcal P}(u,v)
:= |u|_{\mathcal P}\,|v|_{\mathcal P}\,\sinp\theta
\qquad (\theta=\Pangle (u,v)).
\]
We call $\Pi_{\mathcal P}(u,v)$ the \emph{alternating product} associated with
the DA inner product.
In Euclidean geometry it coincides with $|u||v|\sin\theta$, and in DA geometry
it provides a prototype of an area quantity.
(If $u$ or $v$ is in the singular direction, we agree that
$\Pi_{\mathcal P}(u,v)=0$.)
\end{definition}

\begin{proposition}[Alternating property]
For any vectors $u,v$, we have
\[
\Pi_{\mathcal P}(u,v)=-\Pi_{\mathcal P}(v,u).
\]
\end{proposition}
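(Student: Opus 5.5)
The plan is to reduce the claim to two facts. The norms $|u|_{\mathcal P}$ and $|v|_{\mathcal P}$ are symmetric in $u,v$, while the difference angle is antisymmetric: $\Pangle(v,u)=-\Pangle(u,v)$. Combined with the oddness of $\sinp$ from \Cref{def:parabolic-trig}, this gives the result at once.

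First I will dispose of the degenerate case. If $u$ or $v$ points in the singular direction, then by the convention in \Cref{def:alt-prod} both $\Pi_{\mathcal P}(u,v)$ and $\Pi_{\mathcal P}(v,u)$ vanish. The identity $0=-0$ then holds trivially.

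In the remaining case neither vector is singular, so both slopes $\Slp(u)$ and $\Slp(v)$ are finite. The difference angle $\theta=\Pangle(u,v)$ is, by definition, the difference of slopes $\Slp(v)-\Slp(u)$; I would fix the orientation convention from Base~1 here, but the sign convention does not matter. Interchanging the two vectors replaces this difference by its negative, so $\Pangle(v,u)=-\theta$. This antisymmetry is the only step that needs care, and I expect it to be the main obstacle. One has to check that the difference angle of an \emph{ordered} pair of vectors is taken without absolute values and without reducing to an interior angle of a triangle. In the triangle setting of \Cref{prop:two-positive-one-negative}, the sign of an interior angle is read off from the configuration of the vertices. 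For an ordered pair of vectors, however, it is simply the signed slope difference, so no case split on the ordering of $x$--coordinates is needed.

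With antisymmetry in hand, we have $\sinp(-\theta)=-\theta=-\sinp\theta$, since $\sinp\theta=\theta$ is odd. We also have $|v|_{\mathcal P}|u|_{\mathcal P}=|u|_{\mathcal P}|v|_{\mathcal P}$. Hence
\[
\Pi_{\mathcal P}(v,u)=|v|_{\mathcal P}|u|_{\mathcal P}\,\sinp(-\theta)=-|u|_{\mathcal P}|v|_{\mathcal P}\,\sinp\theta=-\Pi_{\mathcal P}(u,v).
\]
If $\theta=0$, that is, if the vectors are parallel, both sides are $0$ because $\sinp 0=0$. This is consistent with the formula $\sinp\theta=\theta$ extended to $\theta=0$, so this case needs no separate treatment.
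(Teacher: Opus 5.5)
Your proposal is correct and takes essentially the same route as the paper, whose proof consists only of the antisymmetry $\Pangle(v,u)=-\Pangle(u,v)$ combined with the oddness of $\sinp$; your explicit handling of the singular-direction convention from \Cref{def:alt-prod} and of the symmetry $|u|_{\mathcal P}|v|_{\mathcal P}=|v|_{\mathcal P}|u|_{\mathcal P}$ spells out what the paper leaves implicit. The one fine point is that \Cref{def:parabolic-trig} defines $\sinp\theta$ only for $\theta\neq0$, so your parallel case rests on the convention $\sinp 0=0$ (which you acknowledge) rather than on the definition itself.
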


\begin{proof}
This follows from the antisymmetry of the difference angle,
\[
\Pangle(v,u)=-\Pangle(u,v),
\]
together with the oddness of $\sinp$.
\end{proof}

\begin{remark}
The alternating property of $\Pi_{\mathcal P}$ reflects the orientation
structure inherent in DA geometry: reversing the difference-angle direction
reverses the orientation of the associated area-type quantity.
\end{remark}

\subsection{Summary}
The parabolic trigonometric functions introduced in this section simultaneously satisfy:
\begin{enumerate}
\item an analytic description of the inner-product structure derived from the
      parallelogram identity,
\item a unified formulation of the DA norm and area information,
\item the realization of a regular limit within the Cayley--Klein family.
\end{enumerate}


\section{Parabolic limits in Cayley--Klein geometry and the DA structure}\label{sec:CK-parabolic-limit}

In this section we make explicit how the defining formulas of angles and
distances (norms) in Cayley--Klein geometry pass to the basic structures of
DA geometry through a degeneration of the absolute conic (the parabolic limit).

\subsection{Laguerre's formula and a general expression for the Cayley--Klein angle}

\begin{figure}[htbp]
  \centering
  \begin{minipage}{0.45\textwidth}
    \centering
    \includegraphics[width=\linewidth]{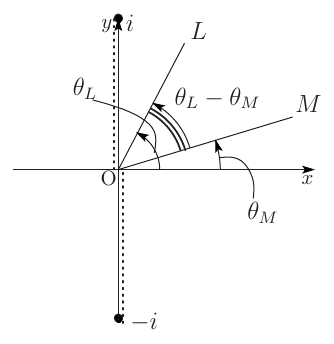}
  \end{minipage}
 \caption{
Geometric configuration for Laguerre's formula.
The angle between two lines $L$ and $M$ is represented as the difference
$\theta_L-\theta_M$ measured with respect to the isotropic directions.
}
\label{fig:laguerre}
\end{figure}

\begin{quote}
In this section, we introduce the Cayley--Klein distance (hereafter CK distance)
and the Cayley--Klein angle (hereafter CK angle).
To present a standard form of the CK angle,
we first define the isotropic lines determined by an absolute conic.

It is well known that the CK angle can be defined in terms of
the cross ratio of directions of lines.
In order to clarify the motivation for this construction,
we return to Laguerre's angle formula.
Although the original source of Laguerre's formula could not be located,
we take as a guiding reference the exposition of the Cayley--Klein metric
\footnote{
URL: \url{https://en.wikipedia.org/wiki/Cayley-Klein_metric}
},
which attributes to Laguerre the idea that an angle can be expressed
as the logarithm of a cross ratio.
We therefore begin by deriving Laguerre's formula for the Euclidean angle.

Based on this viewpoint, we then define the CK angle using the isotropic directions
determined by the absolute conic,
and verify that the resulting angle quantity satisfies the axioms
\Cref{ax:A1}--\Cref{ax:A5}.
Finally, we investigate the parabolic degeneration limit,
which leads naturally to the difference--angle structure.
\end{quote}

In this subsection we write the cross ratio of four points $a,b,c,d$ as
$\Cr(a,b;c,d)$, namely
\[
\Cr(a,b;c,d):=\frac{(a-c)/(b-c)}{(a-d)/(b-d)}.
\]

We regard directions of lines as points of the complex projective line
$\mathbb{CP}^1$.
Angles are ordinary Euclidean angles, and for a line $L$ through the origin we
write its oriented direction by the angle $\theta_L$ with the $x$-axis:
\[
  L:\ \text{direction angle }\theta_L\quad\Bigl(-\frac{\pi}{2}<\theta_L\le\frac{\pi}{2}\Bigr).
\]
Similarly, for a line $M$ with direction angle $\theta_M$, we define the
(oriented) Euclidean angle between $L$ and $M$ by
\[
  \varphi := \theta_L - \theta_M,
\]
viewed in $(-\pi,\pi]$ if necessary.

For simplicity we write the slopes of $L$ and $M$ as
\[
m_L := \tan\theta_L,\quad m_M := \tan\theta_M.
\]

Laguerre's idea may be understood as expressing the angle between two lines
$L$ and $M$ as a cross ratio of the four directions determined by $L$, $M$ and
two reference lines $t_1,t_2$.
If directions are treated as points of $\mathbb{CP}^1$, the cross ratio is a
projective invariant, and this becomes an angle transformation formula.

\begin{theorem}[Laguerre's angle formula]
In the above setting, the Euclidean angle $\varphi=\theta_L-\theta_M$ between
$L$ and $M$ satisfies
\[
  \Cr(m_L,m_M;i,-i) = e^{2i\varphi}
\]
when we choose the direction parameters $m_{t_1}=i$ and $m_{t_2}=-i$
corresponding to the circular points at infinity\footnote{%
In the complex projective line $\mathbb{CP}^1$, these correspond to the
circular points at infinity.}.
Consequently, after fixing a branch,
\[
\varphi=\frac{1}{2i}\log \Cr(m_L,m_M;i,-i).
\]
\end{theorem}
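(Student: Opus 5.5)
We plan a direct computation in which each slope is written through its direction angle. Since $m=\tan\theta$, we have
\[
m-i=\frac{\sin\theta-i\cos\theta}{\cos\theta}=\frac{-i\,e^{i\theta}}{\cos\theta},\qquad
m+i=\frac{\sin\theta+i\cos\theta}{\cos\theta}=\frac{i\,e^{-i\theta}}{\cos\theta}.
\]
These expressions are valid whenever $\cos\theta\neq 0$. We then substitute them into the definition
\[
\Cr(m_L,m_M;i,-i)=\frac{(m_L-i)/(m_M-i)}{(m_L+i)/(m_M+i)}=\frac{(m_L-i)(m_M+i)}{(m_M-i)(m_L+i)}.
\]

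Within this quotient, the factors $\cos\theta_L$ and $\cos\theta_M$ occur once in the numerator and once in the denominator, so they cancel. The constants $-i$ and $i$ also cancel. What remains is
\[
\frac{e^{i\theta_L}e^{-i\theta_M}}{e^{i\theta_M}e^{-i\theta_L}}=e^{2i(\theta_L-\theta_M)}=e^{2i\varphi}.
\]
The second assertion then follows by taking a logarithm and dividing by $2i$. Because $e^{2i\varphi}$ determines $\varphi$ only modulo $\pi$, we fix the branch of $\log$ whose imaginary part lies in $(-2\pi,2\pi]$. We note that reconciling $\varphi\in(-\pi,\pi]$ with this branch convention is a matter of normalization and is the only genuine subtlety.

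The main obstacle is the vertical direction $\theta=\pi/2$, where $m=\infty$. We would treat it either by continuity or by working in homogeneous coordinates on $\mathbb{CP}^1$. In the latter approach we represent a direction by $(\cos\theta:\sin\theta)$ and the points $\pm i$ by $(1:\pm i)$, and compute the cross ratio through $2\times2$ determinants. This gives
\[
\det\begin{pmatrix}\cos\theta&\sin\theta\\1&i\end{pmatrix}=i e^{-i\theta}\cdot(\text{unit constant}),
\]
together with the conjugate analogue. With this representation the same cancellation goes through uniformly for every $\theta$, including $\theta=\pi/2$.
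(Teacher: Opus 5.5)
Your main computation is correct and is essentially the paper's proof. Both substitute $m=\tan\theta$ and recognize $m\mp i$ as $\mp i\,e^{\pm i\theta}/\cos\theta$; the paper writes these factors as $\sin\theta\mp i\cos\theta$ after clearing the cosines. Both then cancel to obtain $e^{2i(\theta_L-\theta_M)}$.

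Two of your side remarks need repair.

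\textbf{The homogeneous determinant.} The determinant with rows $(\cos\theta,\sin\theta)$ and $(1,i)$ is $i\cos\theta-\sin\theta=ie^{i\theta}$ exactly. It is not $ie^{-i\theta}$ times a constant, since the ratio $e^{2i\theta}$ depends on $\theta$. If you carried your value through, the homogeneous computation would return $e^{-2i\varphi}$, contradicting your first computation. With the correct value, together with $-ie^{-i\theta}$ for the row $(1,-i)$, it gives $e^{2i\varphi}$ uniformly in $\theta$, including $\theta=\pi/2$. The paper's intermediate quotient in $\sin\theta\mp i\cos\theta$ is already this homogeneous expression, so the vertical direction is not really an obstacle.

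\textbf{The branch of the logarithm.} Restricting imaginary parts to $(-2\pi,2\pi]$ does not define a branch of $\log$, because a branch needs a window of length $2\pi$. Since $e^{2i\varphi}$ determines $\varphi$ only modulo $\pi$, the second formula can recover $\varphi\in(-\pi,\pi]$ only modulo $\pi$. For example, the principal branch yields the representative in $(-\pi/2,\pi/2]$; alternatively, the branch must be chosen depending on $\varphi$. That is the real content of ``after fixing a branch'', which the paper leaves implicit.
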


\begin{proof}
We assume Euler's formula $e^{i\theta}=\cos\theta+i\sin\theta$.
Choosing $m_{t_1}=i$ and $m_{t_2}=-i$ and using $m_L=\tan\theta_L$ and
$m_M=\tan\theta_M$, we compute
\begin{align*}
\Cr(m_L,m_M; m_{t_1}, m_{t_2})
&=\frac{\tan \theta_L-i}{\tan \theta_M-i}\Big/\frac{\tan \theta_L+i}{\tan \theta_M+i}\\
&=\frac{(\sin\theta_L-i\cos\theta_L)(\sin\theta_M+i\cos\theta_M)}
        {(\sin\theta_M-i\cos\theta_M)(\sin\theta_L+i\cos\theta_L)}\\
&=\frac{(\cos\theta_L+i\sin\theta_L)(\cos(-\theta_M)+i\sin(-\theta_M))}
        {(\cos\theta_M+i\sin\theta_M)(\cos(-\theta_L)+i\sin(-\theta_L))}\\
&=\frac{e^{i\theta_L}e^{-i\theta_M}}{e^{i\theta_M}e^{-i\theta_L}}\\
&=\frac{e^{i(\theta_L-\theta_M)}}{e^{i(\theta_M-\theta_L)}}\\
&=e^{2i(\theta_L-\theta_M)}.
\end{align*}
Therefore,
\[
2i(\theta_L-\theta_M)=\log \Cr(m_L,m_M; i,-i)
\quad\iff\quad
\theta_L-\theta_M=\frac{1}{2i}\log \Cr(m_L,m_M; i,-i).
\]
\end{proof}

In Cayley--Klein geometry one fixes an absolute conic in the projective plane,
\[
\mathcal Q:\quad Ax^2+By^2+Cz^2+Dxy+Eyz+Fzx=0.
\]
Within this framework, Laguerre's formula provides the viewpoint of measuring
slopes of lines via a cross ratio.
In the Euclidean case, the reference directions $i$ and $-i$ can be interpreted
as the tangency directions at the circular points in the complex projective
plane.
Generalizing this idea, one may define an angle by taking the cross ratio of
directions with respect to the isotropic directions determined by $\mathcal Q$.
This is the Cayley--Klein angle.

We first specify the two distinguished isotropic directions determined by the
absolute conic.

\begin{definition}[Isotropic lines]\label{def:isotropic-lines}
Fix an absolute conic $\mathcal Q$ in the projective plane.
Let $I_{t,1}$ and $I_{t,2}$ denote the two tangency directions associated with
$\mathcal Q$.
Any line parallel to either of these directions is called an \emph{isotropic
line} of the corresponding geometry\footnote{%
When the absolute is the unit circle in the Euclidean plane, isotropic lines
correspond to the lines tangent to the circular points at infinity in the
complex projective plane and provide the unit directions for angles.}.
\end{definition}

\begin{figure}[htbp]
  \centering
  \begin{minipage}{0.40\textwidth}
    \centering
    \includegraphics[width=\linewidth]{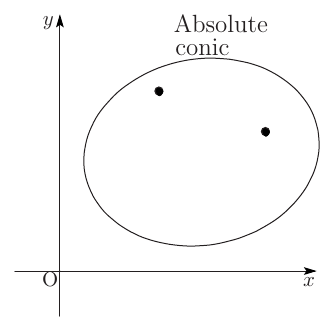}
    \subcaption{}
    \label{fig:ck-geometry}
  \end{minipage}
  \begin{minipage}{0.40\textwidth}
    \centering
    \includegraphics[width=\linewidth]{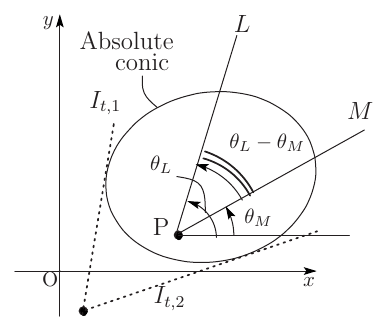}
    \subcaption{}
    \label{fig:ck-angle-def}
  \end{minipage}
 \caption{
Cayley--Klein geometry and its angle structure.
(a) Absolute conic defining the geometry.
(b) The Cayley--Klein angle defined by the cross ratio of directions with
respect to the isotropic lines $I_{t,1}, I_{t,2}$.
}
\end{figure}

\begin{definition}[Cayley--Klein angle]\label{def:ck-angle}
Fix an absolute conic $\mathcal Q$ in the projective plane, and let
$I_{t,1},I_{t,2}$ be the two isotropic directions as in
\Cref{def:isotropic-lines}.
Choose a point $P$ outside $\mathcal Q$, and let $L$ and $M$ be two lines
through $P$ that are not isotropic.

Choose an affine chart and represent directions of lines by the slope parameter
$m\in\R\cup\{\infty\}$.
Writing $m_L,m_M,m_{t_1},m_{t_2}$ for the slopes of $L,M,I_{t,1},I_{t,2}$, the
cross ratio
\[
\Cr(m_L,m_M;m_{t_1},m_{t_2})
=\frac{(m_L-m_{t_1})(m_M-m_{t_2})}{(m_L-m_{t_2})(m_M-m_{t_1})}
\]
is invariant under projective transformations preserving $\mathcal Q$.

For a constant $\lambda\neq 0$, define
\[
\angle_{CK}(L,M)
:=\lambda\,\log\Cr(m_L,m_M;m_{t_1},m_{t_2}),
\]
where $\log$ is taken as the principal branch.
We call this quantity the \emph{Cayley--Klein angle} (CK angle) associated with
$\mathcal Q$.
We will also restrict to the region where $\Cr$ is positive (i.e., where the
relevant directions lie in the same connected component).
\end{definition}

\begin{theorem}\label{thm:ck-angle-axioms}
The angle quantity $\angle_{CK}$ defined in \Cref{def:ck-angle} satisfies
\Cref{ax:A1}--\Cref{ax:A5} after restricting to a suitable domain avoiding the
isotropic lines $I_{t,1},I_{t,2}$.
\end{theorem}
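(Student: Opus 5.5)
The plan is to verify the axioms \Cref{ax:A1}--\Cref{ax:A5} of Base~1 one at a time, reducing each to an algebraic property of the cross ratio $\Cr(m_L,m_M;m_{t_1},m_{t_2})$ regarded as a function of the two slopes $m_L,m_M$. Since Base~1's list is not restated here, I assume it consists of: additivity, antisymmetry/orientation, vanishing exactly for parallel lines, invariance under the admissible motions (in particular translations), and a continuity/monotonicity or normalization condition. I would first fix the domain: an open interval $J\subset\R\cup\{\infty\}$ of slopes containing neither $m_{t_1}$ nor $m_{t_2}$. On $J$ the cross ratio is real, finite and nonzero, and on one connected component it is positive. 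Restricting to lines with slopes in $J$ makes the principal $\log$ real-analytic, so the whole problem is one-dimensional in the slope variable.

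\textbf{Additivity.} The key identity is the cocycle relation
\[
\Cr(m_L,m_M;m_{t_1},m_{t_2})\,\Cr(m_M,m_N;m_{t_1},m_{t_2})=\Cr(m_L,m_N;m_{t_1},m_{t_2}),
\]
which follows by cancelling the factors $(m_M-m_{t_1})$ and $(m_M-m_{t_2})$ in the product formula of \Cref{def:ck-angle}. Because all three factors are positive on the chosen domain, the principal logarithm is additive with no branch correction. Hence $\angle_{CK}(L,M)+\angle_{CK}(M,N)=\angle_{CK}(L,N)$.

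\textbf{Antisymmetry and vanishing.} Taking $N=L$ in the cocycle relation, or reading it off the formula directly, gives $\Cr(m_M,m_L;\cdot,\cdot)=\Cr(m_L,m_M;\cdot,\cdot)^{-1}$. This yields $\angle_{CK}(M,L)=-\angle_{CK}(L,M)$. Next, $\Cr=1$ reduces to $(m_L-m_M)(m_{t_1}-m_{t_2})=0$. Since $m_{t_1}\neq m_{t_2}$ for a nondegenerate absolute, this says $\angle_{CK}(L,M)=0$ if and only if $L\parallel M$.

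\textbf{Invariance and monotonicity.} Invariance under translations is immediate, because $\angle_{CK}$ depends only on slopes. Invariance under the projective transformations preserving $\mathcal Q$ follows from the projective invariance of the cross ratio, since such maps permute the isotropic directions. For the monotonicity or continuity axiom I would compute
\[
\frac{\partial}{\partial m_L}\log\Cr=\frac{m_{t_1}-m_{t_2}}{(m_L-m_{t_1})(m_L-m_{t_2})}.
\]
This has constant sign on $J$, so $\angle_{CK}(\cdot,M)$ is a strictly monotone homeomorphism of $J$ onto an interval. The sign of $\lambda$ can then be chosen so that the orientation matches the convention of Base~1, and $|\lambda|$ can be used for any normalization.

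\textbf{Main obstacle.} I expect the delicate step to be the domain and branch issue rather than any single axiom. In the elliptic or Euclidean-type case the isotropic slopes are complex conjugates. The cross ratio then lies on the unit circle rather than being positive, so one must take $\lambda\in i\R$, as in Laguerre's $\tfrac{1}{2i}$, and restrict to an arc on which the principal branch introduces no $2\pi i$ jumps. Additivity then holds only while the sum of the angles stays inside that arc. I would handle this by stating the theorem separately for real and conjugate isotropic pairs, proving additivity in the conjugate case only for triples of directions in a half-turn sector, and checking that this is exactly the restriction ``suitable domain'' in the statement allows.
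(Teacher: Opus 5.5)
Your treatment of A1--A3 is the paper's own argument. It uses the reciprocity of $\Cr$ under swapping $m_L,m_M$, the cocycle identity, and the factorization $(m_L-m_M)(m_{t_1}-m_{t_2})=0$. You also note that positivity of $\Cr$ makes the principal logarithm additive, which the paper leaves implicit.

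Because you guessed the axiom list, your A4 and A5 are aimed slightly off target. In the paper, A4 is invariance under moving points along the rays $\overrightarrow{PA},\overrightarrow{PB}$. A5 is \emph{continuous bisection}: on each component, for all $r,s$ there is $t$ with $\angle(r,t)=\angle(t,s)=\tfrac12\angle(r,s)$, together with continuity of $s\mapsto\angle(r,s)$.

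Your remark that $\angle_{CK}$ depends only on slopes is exactly the paper's proof of A4. For A5 you need one more line:
\begin{itemize}
\item take $J$ to be a whole component of $(\R\cup\{\infty\})\setminus\{m_{t_1},m_{t_2}\}$;
\item your derivative formula makes $m\mapsto\angle_{CK}(r,m)$ continuous and strictly monotone on $J$, so the intermediate value theorem gives $t$ between $r$ and $s$ with $\angle_{CK}(r,t)=\tfrac12\angle_{CK}(r,s)$;
\item additivity then gives $\angle_{CK}(t,s)=\tfrac12\angle_{CK}(r,s)$, and strict monotonicity gives uniqueness.
\end{itemize}
The paper instead applies the intermediate value theorem to the equation $\Cr(m_{\ell_1},m_M;\cdot)=\Cr(m_M,m_{\ell_2};\cdot)$. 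It asserts without computation that the two sides move in opposite directions. Your derivative is a cleaner justification of that same monotonicity, and it gives continuity for free.

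Two side remarks. First, invariance under maps preserving $\mathcal Q$ is not among A1--A5, and it would only hold up to sign for maps that interchange the isotropic directions, so that part can be dropped.

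Second, the conjugate case in your last paragraph is already excluded by \Cref{def:ck-angle}, which restricts to $\Cr>0$ with real $\lambda$. If you keep it, your sector is too large. With $\lambda=\tfrac{1}{2i}$ one has $\Cr(m_L,m_M;i,-i)=e^{2i(\theta_L-\theta_M)}$, which doubles angles. Principal-branch additivity therefore needs the directions to lie in an open sector of width $\pi/2$, not a half-turn; even antisymmetry fails at differences of $\pm\pi/2$. For example, $\theta=0,\,0.4\pi,\,0.8\pi$ give $\angle(L,M)+\angle(M,N)=-0.8\pi$ but $\angle(L,N)=0.2\pi$.
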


\begin{proof}
We verify the axioms.

\medskip\noindent
\Cref{ax:A1} (opposite angle / order reversal).
For two lines $L,M$ through $P$,
\begin{align*}
\angle_{CK}(M,L)
&=\lambda\log\Cr(m_M,m_L;m_{t_1},m_{t_2})\\
&=\lambda\log\frac{(m_M-m_{t_1})(m_L-m_{t_2})}{(m_M-m_{t_2})(m_L-m_{t_1})}\\
&=-\lambda\log\frac{(m_L-m_{t_1})(m_M-m_{t_2})}{(m_L-m_{t_2})(m_M-m_{t_1})}\\
&=-\lambda\log\Cr(m_L,m_M;m_{t_1},m_{t_2})\\
&=-\angle_{CK}(L,M).
\end{align*}

\medskip\noindent
\Cref{ax:A2} (additivity).
For three oriented lines $\ell_1,\ell_2,\ell_3$ through $P$,
\begin{align*}
\angle_{CK}(\ell_1,\ell_2)+\angle_{CK}(\ell_2,\ell_3)
&=\lambda\log\Cr(m_{\ell_1},m_{\ell_2};m_{t_1},m_{t_2})
 +\lambda\log\Cr(m_{\ell_2},m_{\ell_3};m_{t_1},m_{t_2})\\
&=\lambda\log\frac{(m_{\ell_1}-m_{t_1})(m_{\ell_2}-m_{t_2})}{(m_{\ell_1}-m_{t_2})(m_{\ell_2}-m_{t_1})}
   +\lambda\log\frac{(m_{\ell_2}-m_{t_1})(m_{\ell_3}-m_{t_2})}{(m_{\ell_2}-m_{t_2})(m_{\ell_3}-m_{t_1})}\\
&=\lambda\log\frac{(m_{\ell_1}-m_{t_1})(m_{\ell_3}-m_{t_2})}{(m_{\ell_1}-m_{t_2})(m_{\ell_3}-m_{t_1})}\\
&=\lambda\log\Cr(m_{\ell_1},m_{\ell_3};m_{t_1},m_{t_2})\\
&=\angle_{CK}(\ell_1,\ell_3).
\end{align*}

\medskip\noindent
\Cref{ax:A3} (vanishing).
If $m_{\ell_1}=m_{\ell_2}$, then
\begin{align*}
\angle_{CK}(\ell_1,\ell_2)
&=\lambda\log\Cr(m_{\ell_1},m_{\ell_1};m_{t_1},m_{t_2})\\
&=\lambda\log\frac{(m_{\ell_1}-m_{t_1})(m_{\ell_1}-m_{t_2})}{(m_{\ell_1}-m_{t_2})(m_{\ell_1}-m_{t_1})}\\
&=\lambda\log 1=0.
\end{align*}
Conversely, if $\angle_{CK}(\ell_1,\ell_2)=0$, then
\[
\Cr(m_{\ell_1},m_{\ell_2};m_{t_1},m_{t_2})=1,
\]
hence
\[
(m_{\ell_1}-m_{t_1})(m_{\ell_2}-m_{t_2})
=(m_{\ell_1}-m_{t_2})(m_{\ell_2}-m_{t_1}),
\]
which implies
\[
(m_{\ell_1}-m_{\ell_2})(m_{t_1}-m_{t_2})=0.
\]
Since the two isotropic directions are distinct, $m_{t_1}\neq m_{t_2}$, so
$m_{\ell_1}=m_{\ell_2}$ follows.

\medskip\noindent
\Cref{ax:A4} (scaling invariance along rays).
Let $\overrightarrow{PX}$ denote an oriented ray from $P$.
If $A'\in\overrightarrow{PA}$ and $B'\in\overrightarrow{PB}$, then $PA$ and
$PA'$ are the same line, and similarly $PB$ and $PB'$ are the same line.
Thus
\[
m_{PA}=m_{PA'},\qquad m_{PB}=m_{PB'}.
\]
Therefore,
\begin{align*}
\angle_{CK}(PA,PB)
&=\lambda\log\Cr(m_{PA},m_{PB};m_{t_1},m_{t_2})\\
&=\lambda\log\Cr(m_{PA'},m_{PB'};m_{t_1},m_{t_2})\\
&=\angle_{CK}(PA',PB').
\end{align*}

\begin{remark}
A4 asserts that the angle quantity depends only on the \emph{direction} of the
lines and not on the choice of points along the rays (i.e., not on scale).
\end{remark}

\medskip\noindent
\Cref{ax:A5} (continuous divisibility).
(i) (existence of an angle bisector).
Let $\ell_1,\ell_2$ be two lines through $P$, with slopes
$m_{\ell_1},m_{\ell_2}$.
A bisector line $M$ satisfies
\[
\angle_{CK}(\ell_1,M)=\angle_{CK}(M,\ell_2).
\tag{$\ast$}
\]
By the definition of the CK angle, this is equivalent to
\[
\Cr(m_{\ell_1},m_M;m_{t_1},m_{t_2})
=
\Cr(m_M,m_{\ell_2};m_{t_1},m_{t_2}).
\tag{$\ast\ast$}
\]
Both sides are rational functions of $m_M$.
As $m_M$ varies within any connected component of
$\R\cup\{\infty\}\setminus\{m_{t_1},m_{t_2}\}$, the left-hand side is monotone
in one direction while the right-hand side is monotone in the opposite
direction (or vice versa).
Hence, by the intermediate value theorem, there exists a unique $m_M$ solving
\((\ast\ast)\).

\smallskip\noindent
(ii) (continuous subdivision).
Fix $P$ and let $\mathcal R_P$ be the topological space of oriented rays through
$P$.
Let $S_P\subset\mathcal R_P$ be the singular set consisting of rays in isotropic
directions, and put $D_P=\mathcal R_P\setminus S_P$.
Using the slope $m$ as a local coordinate, $D_P$ is a disjoint union of open
intervals.
For each connected component $C\subset D_P$, fix a reference ray $r\in C$.
Then the map
\[
C\ni s \longmapsto \angle_{CK}(r,s)
   =\lambda\log\Cr(m_r,m_s;m_{t_1},m_{t_2})
\]
is a rational function of $m_s$, hence real-analytic away from the singular
values $m_{t_1},m_{t_2}$.
In particular it is continuous on $C$, and \Cref{ax:A5}(ii) follows.
\end{proof}

\begin{remark}
In a suitable affine coordinate system, let $m_L$ denote the slope of a line
$L$.
For four lines $L_1,L_2,L_3,L_4$, we write
\[
\Cr(L_1,L_2;L_3,L_4)
:= \Cr(m_{L_1},m_{L_2};m_{L_3},m_{L_4}).
\]
\end{remark}

\subsection{Linear Degeneration Limit of the CK Angle}

\begin{figure}[htbp]
  \centering
  \begin{minipage}{0.40\textwidth}
    \centering
    \includegraphics[width=\linewidth]{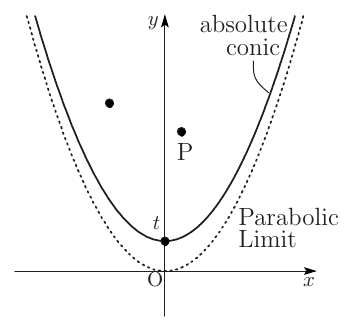}
    \subcaption{}
    \label{fig:fig-parabolic-abso}
  \end{minipage}
  \begin{minipage}{0.40\textwidth}
    \centering
    \includegraphics[width=\linewidth]{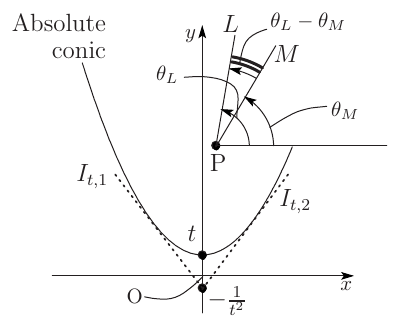}
    \subcaption{}
    \label{fig:parabolic-angle}
  \end{minipage}
 \caption{
(a) Parabolic degeneration of the absolute conic $\mathcal Q_t$ as $t\to0^+$, converging to the limit parabola $y=\kappa x^2$.
(b) Isotropic lines $I_{t,1}, I_{t,2}$ associated with the absolute conic $\mathcal Q_t$,
and the Cayley--Klein angle $\theta_L-\theta_M$ defined by the cross ratio of directions.
In the parabolic limit $t\to0^+$, the isotropic lines degenerate into a single direction.
}

\end{figure}
\begin{quote}
The quantity defined in \Cref{def:ck-angle} has been recognized as a valid notion of angle.
In this subsection, we consider a one--parameter family of absolute conics $Q_t$ given by parabolas,
and derive the parabolic difference angle $\Pangle$
by taking the first--order approximation of the logarithm of the cross ratio defining the CK angle
as $t\to0$.
Through this limiting process, DA geometry is obtained as a consistent linear geometric model
with the parabola as the absolute figure.
\end{quote}

In Cayley--Klein geometry, once an absolute conic $Q$ is fixed,
its tangents determine the isotropic lines,
and these two distinguished directions provide the reference for angle measurement.
More precisely,

\begin{itemize}
  \item when the isotropic lines define two distinct real directions,
        the geometry is of hyperbolic type;
  \item when the isotropic lines form a complex conjugate pair
        and do not appear as real lines,
        the geometry is of elliptic type.
\end{itemize}

This corresponds to the classification of Cayley--Klein geometries
according to the signature of the absolute conic
(elliptic, hyperbolic, and parabolic types).

In this section, we fix $\kappa>0$ and consider a family of parabolas
\[
\mathcal Q_t:\quad y=\kappa x^2+t
\]
as absolute conics.
For $t>0$, the tangents are complex conjugate (elliptic type),
whereas for $t<0$ there exist two distinct real tangents (hyperbolic type).

To derive the difference angle, the choice of isotropic lines is essential.
In particular, we consider the isotropic lines drawn from the point
$\bigl(0,-\frac{1}{t^2}\bigr)$ $(t\neq0)$ to $Q_t$.

The slopes of these isotropic lines are obtained by imposing the double--root condition
on the equation $\kappa x^2+t=mx-\frac{1}{t^2}$, which yields
\[
m=\pm 2\sqrt{\kappa\!\left(t+\frac{1}{t^2}\right)}.
\]
As $t\to0$, we have
\[
|s(t)|:=2\sqrt{\kappa\!\left(t+\frac{1}{t^2}\right)}\longrightarrow\infty.
\]

Thus the slopes of the two isotropic lines diverge,
and both directions converge to the axis direction $x=0$.
Since in DA geometry the angle with respect to the singular direction is defined to be zero,
the linear degeneration limit of the CK angle is necessarily taken along this axial direction.

More precisely, the two isotropic lines
\[
I_{t,1}:\ y=s(t)x+\cdots,\qquad
I_{t,2}:\ y=-s(t)x+\cdots
\]
collapse, in the parabolic limit $t\to0$, to a single isotropic direction
\[
I_{t,1},I_{t,2}\;\longrightarrow\; x=0.
\]
This collapse of isotropic lines provides the geometric basis
for the linearization of the logarithmic CK angle in the parabolic limit.

We now introduce the following notation.
For a line $L$ with slope $m_L\neq0$,
and for four lines $L_1,L_2,L_3,L_4$, we define
\[
\Cr^\vee(L_1,L_2;L_3,L_4)
:= \Cr\!\left(\frac{1}{m_{L_1}},\frac{1}{m_{L_2}};
               \frac{1}{m_{L_3}},\frac{1}{m_{L_4}}\right).
\]
We consider in particular the cross ratio
\[
\Cr^\vee(\ell_1,\ell_2;I_{t,1},I_{t,2}),
\]
whose logarithmic limit will be the object of our analysis.
Hereafter, $\Cr^\vee$ is defined for $m_{L_i}\neq0$,
and cases involving horizontal lines will be treated by continuous extension
in a later remark.

Although the definition requires the two isotropic lines $I_{t,1},I_{t,2}$ to be distinct,
in the parabolic limit $s(t)\to\infty$ their reciprocal slopes satisfy
\[
\frac{1}{m_{t_1}}=\frac{1}{s(t)},\qquad
\frac{1}{m_{t_2}}=-\frac{1}{s(t)},
\]
and hence approach each other.
Consequently, the cross ratio admits an expansion of the form
\[
\Cr^\vee(\ell_1,\ell_2;I_{t,1},I_{t,2})
=1+c_1(m_1-m_2)\,u(t)+O\!\left(u(t)^2\right),
\]
where $c_1$ is a constant and $u(t)$ is a small parameter.
Therefore,
\[
\log\Cr^\vee \approx c_1(m_1-m_2)\,u(t),
\]
and the logarithm becomes linearized.

Fix a point $P$ and consider two rays $\ell_1,\ell_2$ emanating from $P$
with slopes $m_1,m_2$.
We define the parabolic CK angle by
\[
\angle_t(\ell_1,\ell_2)
:=\log\Cr^\vee(\ell_1,\ell_2;I_{t,1},I_{t,2}).
\]

\begin{lemma}\label{lem:log-expand}
For $|x|<1$, the expansion $\log(1+x)=x+O(x^2)$ holds.
\end{lemma}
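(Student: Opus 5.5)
The statement to prove is that $\log(1+x)=x+O(x^2)$ for $|x|<1$. I would argue directly from the Mercator series and bound the tail by a geometric series. For $|x|<1$ the power series
\[
\log(1+x)=\sum_{n\ge1}\frac{(-1)^{n+1}}{n}x^n
\]
converges absolutely. It is the principal branch, since both sides vanish at $x=0$ and have derivative $1/(1+x)=\sum_{n\ge0}(-x)^n$ on $(-1,1)$; the same argument works on the complex unit disc. Subtracting the first term leaves the remainder
\[
R(x):=\log(1+x)-x=\sum_{n\ge2}\frac{(-1)^{n+1}}{n}x^n .
\]

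The key step is to estimate $R(x)$ termwise. Using $1/n\le 1/2$ for $n\ge2$,
\[
|R(x)|\le \frac12\sum_{n\ge2}|x|^n=\frac{|x|^2}{2(1-|x|)} .
\]
This shows $R(x)=O(x^2)$ as $x\to0$. More precisely, on any disc $|x|\le r<1$ we get $|R(x)|\le C_r|x|^2$ with $C_r=1/(2(1-r))$, which is exactly the content of the $O(x^2)$ notation.

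The only point needing care is that the implied constant is not uniform on all of $|x|<1$; it blows up as $|x|\to1$. So I would state explicitly that the $O$ is meant as $x\to0$, or equivalently uniformly on $|x|\le r$ for a fixed $r<1$. This is the form needed later, where $x=c_1(m_1-m_2)u(t)\to0$ as $t\to0$, so $|x|\le\frac12$ eventually and one may take $C=1$.

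As an alternative check for real $x$, Taylor's theorem with Lagrange remainder gives $R(x)=-\frac{x^2}{2(1+\xi)^2}$ for some $\xi$ between $0$ and $x$. This is bounded by $2x^2$ when $|x|\le\frac12$, which confirms the bound independently of the series argument.
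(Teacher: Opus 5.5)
Your proof is correct. The paper gives no proof of this lemma: it is quoted as a standard fact, and restated without proof in Lemma~\ref{lem:binom-expansions}. So there is no competing route to compare, and your Mercator-series tail estimate $|\log(1+x)-x|\le |x|^2/\bigl(2(1-|x|)\bigr)$ simply supplies the omitted argument with an explicit constant.

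Your version adds two things to the paper's bare statement.

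First, you note that the implied constant is uniform only on $|x|\le r<1$. It must blow up near the boundary, since $\log(1+x)\to-\infty$ as $x\to-1^{+}$. This corrects the literal reading of ``for $|x|<1$'', under which the lemma would be false.

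Second, you work with the full series rather than only a first-order remainder. This is what actually justifies the paper's later claim in Lemma~\ref{lem:first-order-CK} that the logarithm of the cross ratio equals $-2u(t)(m_1-m_2)+O(u(t)^3)$. The $O(u^3)$ error there comes from the even-order terms cancelling in $\log(1-y)-\log(1+y)=-2\bigl(y+y^3/3+\cdots\bigr)$. The lemma as stated, with error $O(x^2)$, cannot deliver that; it would give only $O(u^2)$.
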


\begin{lemma}[First--order expansion of the CK angle]\label{lem:first-order-CK}
Let the slopes of the isotropic lines be
\[
m_{t_1}=s(t)=\frac{1}{u(t)},\qquad
m_{t_2}=-s(t)=-\frac{1}{u(t)},
\]
where $u(t)\to0$ corresponds to the parabolic limit $t\to0$.
For slopes $m_1,m_2\neq0$ of $\ell_1,\ell_2$, we have
\[
\log\Cr^\vee(\ell_1,\ell_2;I_{t,1},I_{t,2})
=-2u(t)(m_1-m_2)+O\!\left(u(t)^3\right).
\]
\end{lemma}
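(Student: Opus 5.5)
The plan is to compute the cross ratio explicitly and observe that its logarithm splits into a difference of two inverse hyperbolic tangents. With the paper's convention $\Cr(a,b;c,d)=\frac{(a-c)/(b-c)}{(a-d)/(b-d)}$, I substitute
\[
a=\frac1{m_1},\quad b=\frac1{m_2},\quad c=\frac1{m_{t_1}}=u,\quad d=\frac1{m_{t_2}}=-u,
\]
where $u=u(t)$. This gives
\[
\Cr^\vee(\ell_1,\ell_2;I_{t,1},I_{t,2})
=\frac{\bigl(\tfrac1{m_1}-u\bigr)\bigl(\tfrac1{m_2}+u\bigr)}{\bigl(\tfrac1{m_2}-u\bigr)\bigl(\tfrac1{m_1}+u\bigr)}.
\]
Multiplying numerator and denominator by $m_1m_2$ (legitimate since $m_1,m_2\neq0$) clears the reciprocals:
\[
\Cr^\vee=\frac{(1-um_1)(1+um_2)}{(1+um_1)(1-um_2)}.
\]

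Next I fix $m_1,m_2$ and restrict to $t$ close enough to $0$ that $|u m_1|<1$ and $|u m_2|<1$; this is possible because $u(t)\to0$. In that range every factor is positive, so $\Cr^\vee>0$ and the principal logarithm splits as
\[
\log\Cr^\vee=\Bigl[\log(1-um_1)-\log(1+um_1)\Bigr]-\Bigl[\log(1-um_2)-\log(1+um_2)\Bigr].
\]
Each bracket equals $-2\operatorname{artanh}(um_i)$, so
\[
\log\Cr^\vee=-2\operatorname{artanh}(um_1)+2\operatorname{artanh}(um_2).
\]

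Finally I expand using the odd power series $\operatorname{artanh}z=z+\tfrac{z^3}{3}+\cdots$, which is valid for $|z|<1$ and is obtained from \Cref{lem:log-expand} applied to $\log(1\pm z)$. Because only odd powers of $z$ appear, this yields
\[
\log\Cr^\vee=-2u(m_1-m_2)-\tfrac23u^3(m_1^3-m_2^3)+O(u^5),
\]
which is the claimed $-2u(m_1-m_2)+O(u^3)$.

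The one point needing care is why the remainder is $O(u^3)$ and not merely $O(u^2)$, which is all a naive use of \Cref{lem:log-expand} would give. The improvement comes entirely from the symmetric placement $\pm u$ of the isotropic reciprocal slopes. This symmetry pairs $\log(1-z)$ with $-\log(1+z)$, so the even-order terms cancel. I would make that cancellation explicit, and also note that the $O(u^3)$ constant depends on $m_1,m_2$, so the estimate is uniform only on compact sets of slopes.
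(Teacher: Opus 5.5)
Your proposal is correct and is essentially the paper's proof. It uses the same substitution $1/m_{t_{1,2}}=\pm u$, the same cleared form $\frac{(1-um_1)(1+um_2)}{(1+um_1)(1-um_2)}$, and the same splitting of the logarithm into four terms expanded in $u$; your $\operatorname{artanh}$ grouping and positivity check only make explicit the even-order cancellation and branch choice that the paper leaves implicit. One small correction: the odd series for $\operatorname{artanh}$ (or even $\log(1+x)=x-\tfrac{x^2}{2}+O(x^3)$) does not follow from \Cref{lem:log-expand}, which is only first order, so cite the full Taylor series of $\log$ at that step instead.
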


Here $u(t)=1/s(t)$ is a small parameter tending to zero
in the parabolic limit $t\to0$,
and all subsequent expansions are taken with respect to this parameter.

\begin{proof}
Let $u:=u(t)$.
Then
\begin{align*}
\Cr^\vee(\ell_1,\ell_2;I_{t,1},I_{t,2})
&=\Cr\!\left(\frac{1}{m_1},\frac{1}{m_2};u,-u\right)\\
&=\frac{(1-m_1u)(1+m_2u)}{(1+m_1u)(1-m_2u)}.
\end{align*}
Taking logarithms and expanding yields
\begin{align*}
\log\Cr^\vee
&=\log(1-m_1u)-\log(1+m_1u)
 +\log(1+m_2u)-\log(1-m_2u)\\
&=-2m_1u+2m_2u+O(u^3)\\
&=-2(m_1-m_2)u+O(u^3).
\end{align*}
\end{proof}

In the parabolic limit, the variation of the cross ratio collapses to first order in
\[
u(t)=\frac{1}{s(t)}.
\]
To obtain a finite limiting angle, a first--order normalization is therefore required.
We introduce a normalization function $\alpha(t)$ satisfying
\[
\alpha(t)\sim -2u(t).
\]

\begin{definition}[Normalization of the parabolic angle]\label{def:parabolic-angle}
Let $u(t)=1/s(t)$ and let $\alpha(t)$ satisfy $\alpha(t)\sim -2u(t)$.
For two rays $\ell_1,\ell_2$ with slopes $m_1,m_2\neq0$,
the parabolic angle is defined by
\[
\Pangle(\ell_1,\ell_2)
:=\lim_{t\to0}\frac{1}{\alpha(t)}
\log\Cr^\vee(\ell_1,\ell_2;I_{t,1},I_{t,2}).
\]
\end{definition}

\begin{maintheorem}[Linearization of the CK angle]\label{thm:CK-linearization}
Let $s(t)\to\infty$ (equivalently $u(t)\to0$),
and let $\alpha(t)\sim -2u(t)$.
For slopes $m_1,m_2\neq0$, the parabolic angle defined above satisfies
\[
\Pangle(\ell_1,\ell_2)=m_1-m_2.
\]
\end{maintheorem}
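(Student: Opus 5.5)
The plan is to read the statement off the first--order expansion in \Cref{lem:first-order-CK} and then pass to the limit in the quotient of \Cref{def:parabolic-angle}. Fix slopes $m_1,m_2\neq 0$ and write $u=u(t)=1/s(t)$. Then $u(t)\to 0$ as $t\to0$, because $|s(t)|\to\infty$. By \Cref{lem:first-order-CK},
\[
\log\Cr^\vee(\ell_1,\ell_2;I_{t,1},I_{t,2})=-2u(t)(m_1-m_2)+R(t),\qquad |R(t)|\le K\,|u(t)|^3,
\]
for all sufficiently small $|t|$. The constant $K$ depends only on $m_1,m_2$. To justify the $O(u^3)$ term, I would expand each of the four logarithms with \Cref{lem:log-expand}, taking $|m_iu|<1$, and note that the even-order terms cancel pairwise in $\log(1-m_iu)-\log(1+m_iu)$.

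Next I divide by the normalization $\alpha(t)$ and split the quotient as
\[
\frac{1}{\alpha(t)}\log\Cr^\vee
=\frac{-2u(t)}{\alpha(t)}\,(m_1-m_2)+\frac{R(t)}{\alpha(t)}.
\]
The hypothesis $\alpha(t)\sim-2u(t)$ means $\alpha(t)/(-2u(t))\to1$, so the first term tends to $m_1-m_2$. For the second term I bound
\[
\Bigl|\frac{R(t)}{\alpha(t)}\Bigr|\le K\,|u(t)|^2\,\Bigl|\frac{u(t)}{\alpha(t)}\Bigr|.
\]
Here $|u/\alpha|\to 1/2$ and $|u|^2\to0$, so the remainder term vanishes in the limit. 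This gives $\Pangle(\ell_1,\ell_2)=m_1-m_2$.

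The main obstacle is controlling the limit uniformly on both sides of $t=0$, together with the signs involved. Two facts are needed. First, $u(t)$ must be nonzero near $t=0$, so that $\alpha(t)\neq0$ eventually. Second, the sign convention $m_{t_1}=s(t)$, $m_{t_2}=-s(t)$ must match the ordering of the cross ratio used in \Cref{lem:first-order-CK}.

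For $t<0$ close to $0$, I would check that $t+1/t^2>0$ still holds, so $s(t)$ is real and finite. For the sign, swapping the roles of $I_{t,1}$ and $I_{t,2}$ would flip the result to $m_2-m_1$. I would therefore state explicitly that the isotropic lines are labelled as in the lemma.

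Finally, I would remark that the definition is independent of the particular choice of $\alpha$ within its asymptotic class $\alpha(t)\sim-2u(t)$, since only the ratio $\alpha/(-2u)\to1$ entered the argument.
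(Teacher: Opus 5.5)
Your proposal is correct and is essentially the paper's proof: substitute the expansion of \Cref{lem:first-order-CK} into the quotient of \Cref{def:parabolic-angle} and let $t\to0$. Your split via $-2u(t)/\alpha(t)\to1$ is slightly more careful than the paper's one-line claim of an $O(u(t)^2)$ error, which tacitly needs $\alpha(t)=-2u(t)\bigl(1+O(u(t)^2)\bigr)$. One small correction: \Cref{lem:log-expand} by itself only gives an $O(u^2)$ remainder, which already suffices for the limit, whereas the even-order cancellation that produces $O(u^3)$ requires the full logarithmic series.
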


\begin{proof}
By definition and \Cref{lem:first-order-CK},
\[
\frac{1}{\alpha(t)}
\log\Cr^\vee(\ell_1,\ell_2;I_{t,1},I_{t,2})
=(m_1-m_2)+O\!\left(u(t)^2\right).
\]
Taking the limit $t\to0$ yields the result.
\end{proof}

\begin{remark}[Including horizontal lines]
Since the right--hand side of \Cref{lem:first-order-CK}
is analytic in $m_1,m_2$,
the formula extends continuously to the case $m_1=0$ or $m_2=0$.
Thus the same expression
\[
\Pangle(\ell_1,\ell_2)=m_1-m_2
\]
holds whenever horizontal lines are involved.
\end{remark}

In this way, we confirm that the parabolic linear degeneration limit
of the CK angle converges to the difference angle.
The choice of isotropic lines employed here is not merely a computational convenience,
but serves to clarify geometrically how the isotropic directions collapse
into a single direction in the parabolic limit.
Indeed, performing the same analysis using the point $(0,-1/t)$
leads to no essential difference in the resulting first--order angle.

This reflects the fact that the elliptic and hyperbolic distinctions
in Cayley--Klein geometry become inessential in the parabolic limit.
From this viewpoint, the difference angle can be interpreted as a finite,
first--order quantity arising on the boundary of the CK framework.
Accordingly, DA geometry is positioned as a degeneration model
distinct from the classical trichotomy
(elliptic, hyperbolic, and Euclidean) of Cayley--Klein geometries.

\begin{remark}[Role of DA geometry]
Within this degeneration framework,
the difference angle may be interpreted as capturing
an ``extremely microscopic'' notion of angle.
On the other hand, in view of the parabolic trigonometric identity
$\cosp\theta=\pm1$,
it may also be regarded as a quantity that appears microscopic
when the geometry is viewed from a macroscopic perspective.
\end{remark}

\begin{remark}[A degenerate angle quantity contrasting with the difference angle]
\label{rem:dual-diff-angle}
In the above degenerate limit, if one constructs a Cayley--Klein geometry
by choosing the isotropic lines to pass through the origin,
then a similar first--order approximation yields only a linearization of the form
\[
\log\Cr(\ell_1,\ell_2; I_{t,1},I_{t,2})
= d_1\!\left(\frac{1}{m_1}-\frac{1}{m_2}\right)u(t)
  +O\!\left(u(t)^3\right),
\qquad (d_1 \text{ is a constant}),
\]
and no other leading behavior appears.
That is, the resulting angle quantity is not given by the difference of slopes
$m_1-m_2$, but rather by the difference of their reciprocals
$\frac{1}{m_1}-\frac{1}{m_2}$.
Since this angle quantity arises as a degenerate limit of the Cayley--Klein angle,
it naturally carries an axiomatic system of angles satisfying
\Cref{ax:A1}--\Cref{ax:A5}.
In this sense, it can be interpreted as having a character contrasting with that of
the difference angle.
Indeed, the isoptic curves associated with this angle degenerate linearly and appear
as straight lines, in clear contrast to the parabolic isoptics arising in
difference--angle geometry.

Nevertheless, the geometry based on this angle degenerates essentially into a linear
structure from a geometric point of view, and therefore does not give rise to the rich
tangential structure or triangle geometry observed in difference--angle geometry.
For this reason, the present paper focuses on difference--angle geometry, which possesses
more substantial geometric content, and does not pursue a detailed study of this
degenerate geometry.
\end{remark}

\subsection{Degeneration Limit of the CK Distance}
\begin{figure}[htbp]
  \centering
  \begin{minipage}{0.40\textwidth}
    \centering
    \includegraphics[width=\linewidth]{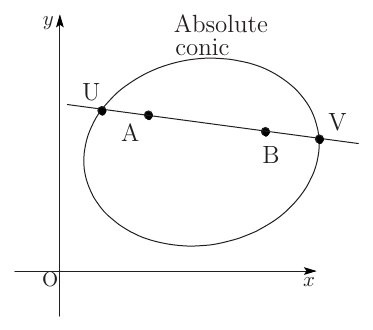}
    \subcaption{}
    \label{fig:ck-dist}
  \end{minipage}
  \begin{minipage}{0.40\textwidth}
    \centering
    \includegraphics[width=\linewidth]{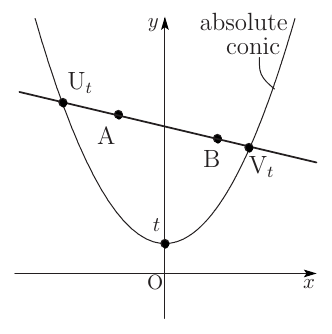}
    \subcaption{}
    \label{fig:parabolic-dist}
  \end{minipage}
 \caption{
Cayley--Klein distance and its parabolic degeneration.
(a) Definition of the Cayley--Klein distance between $A$ and $B$ via the intersection points
$U,V$ with the absolute conic.
(b) Degenerating case where the absolute conic is given by $y=\kappa x^{2}+t$,
approaching the parabolic limit as $t\to0^{+}$.
}

\end{figure}

Here we again take the projective reference line $\ell$ to be the $x$--axis
and the projection direction $d$ to be the $y$--axis direction.
As in the previous subsection, we fix $\kappa>0$ and consider the family of absolute conics
\[
\mathcal Q_t:\quad y=\kappa x^2+t\qquad (t>0).
\]
Then:

\begin{itemize}
  \item For $t>0$, the conic $\mathcal Q_t$ can be brought, by an affine change of coordinates,
        into a Cayley--Klein standard form (elliptic or hyperbolic type).
  \item As $t\to0^{+}$, the conic $\mathcal Q_t$ degenerates to the parabola $y=\kappa x^2$,
        yielding a parabolic limit.
  \item We restrict projective transformations to those preserving the pair $(\ell,d)$.
  \item For any line $AB$, the intersection points $U_t,V_t=AB\cap\mathcal Q_t$
        converge to points on the limit parabola, and the logarithm of the associated cross ratio
        admits a first--order (linear) expansion.
\end{itemize}

Let $\mathcal Q_t$ be an absolute conic in the (real or complex) projective plane.
For two points $A,B$, let $U_t,V_t$ be the intersection points of the line $AB$ with $\mathcal Q_t$.
The Cayley--Klein distance is defined by the logarithmic cross ratio
\[
d_t(A,B)
= \frac{1}{2}
\left|
\log\frac{\overline{AU_t}\cdot \overline{BV_t}}
{\overline{AV_t}\cdot \overline{BU_t}}
\right|.
\]
Here $\overline{PQ}$ denotes the affine length ratio along the projective line $PQ$.

\begin{proposition}[CK distance]\label{prop:CK-distance}
Let $\mathcal C:\,y=\kappa x^2$ be the parabola and take two distinct points
$A=(x_A,\kappa x_A^2)$ and $B=(x_B,\kappa x_B^2)$ on $\mathcal C$ ($x_A\neq x_B$).
For $\mathcal Q_t:\,y=\kappa x^2+t$, let $U_t,V_t$ be the intersection points of $AB$ with $\mathcal Q_t$.
Then the Cayley--Klein distance satisfies
\[
d_t(A,B)
=\left|\log\!\left(\frac{b-\Delta}{b+\Delta}\right)\right|,
\qquad
b:=x_B-x_A,\quad
\Delta:=\sqrt{\,b^2-\frac{4t}{\kappa}\,}.
\]
\end{proposition}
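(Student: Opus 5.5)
The plan is a direct computation. The key observation is that every point in the cross ratio lies on the single non-vertical line $AB$, so each affine length ratio $\overline{PQ}$ along $AB$ can be replaced by the corresponding difference of $x$--coordinates. The common factor $\sqrt{1+\mathrm{Slp}(AB)^2}$, or more simply the projection onto the $x$--axis, cancels in numerator and denominator. This is the same reduction to $x$--coordinates that underlies the difference--angle norm, and it turns $d_t(A,B)$ into a one-variable expression in $x_A,x_B,x_{U_t},x_{V_t}$.

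First I locate $U_t,V_t$. Since $A,B\in\mathcal C$, the line $AB$ is $y=\kappa(x_A+x_B)x-\kappa x_Ax_B$, exactly as in the proof of \Cref{mthm:parabolic-power-main}. Substituting into $y=\kappa x^2+t$ gives
\[
(x-x_A)(x-x_B)=-\frac{t}{\kappa}.
\]
Writing $m=\tfrac12(x_A+x_B)$ and $b=x_B-x_A$, the roots are $x=m\pm\tfrac12\Delta$ with $\Delta^2=b^2-4t/\kappa$. So I set $x_{U_t}=m-\tfrac12\Delta$ and $x_{V_t}=m+\tfrac12\Delta$. I will assume $t$ is small enough, namely $0<t<\kappa b^2/4$, so that $\Delta$ is real with $0<\Delta<|b|$. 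This is the regime relevant to the limit $t\to0^+$, and it is the implicit hypothesis making $U_t,V_t$ real.

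Next I compute the four signed ratios:
\[
\overline{AU_t}=\tfrac12(b-\Delta),\quad
\overline{BV_t}=\tfrac12(\Delta-b),\quad
\overline{AV_t}=\tfrac12(b+\Delta),\quad
\overline{BU_t}=-\tfrac12(b+\Delta).
\]
The cross ratio is therefore $(b-\Delta)^2/(b+\Delta)^2$. Then $\tfrac12\lvert\log(\cdot)\rvert=\bigl\lvert\log\bigl\lvert(b-\Delta)/(b+\Delta)\bigr\rvert\bigr\rvert$, since the factor $\tfrac12$ absorbs the square.

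Finally I check that the inner absolute value can be dropped, which gives exactly the stated formula. Since $0<\Delta<|b|$, the quantities $b-\Delta$ and $b+\Delta$ have the same sign (both have the sign of $b$), so $(b-\Delta)/(b+\Delta)>0$.

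The only delicate point is the sign and ordering bookkeeping. The labelling of $U_t$ versus $V_t$ and the sign of $b$ must not affect the result. Swapping $U_t$ and $V_t$ inverts the cross ratio, which only flips the sign of the logarithm, and the outer absolute value removes that. Likewise, the case $b<0$ is covered by the same-sign observation above.
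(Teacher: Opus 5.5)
Your proposal is correct and follows essentially the same route as the paper. You substitute the chord $y=\kappa(x_A+x_B)x-\kappa x_Ax_B$ into $\mathcal Q_t$ to get $x_{U_t},x_{V_t}=\tfrac12(x_A+x_B\mp\Delta)$, replace the length ratios by $x$-coordinate differences, and let the factor $\tfrac12$ absorb the square $\bigl((b-\Delta)/(b+\Delta)\bigr)^2$. Your consistently signed bookkeeping and the explicit restriction $0<t<\kappa b^2/4$, which makes $U_t,V_t$ real and the ratio positive, are slightly more careful than the paper. The paper uses mixed-orientation ratios such as $(x_{U_t}-x_A)/(x_B-x_{U_t})$ and leaves the admissible range of $t$ implicit.
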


\begin{proof}
The chord $AB$ has equation $y=\kappa(x_A+x_B)x-\kappa x_Ax_B$.
Substituting into $\mathcal Q_t$ yields
\[
x^2-(x_A+x_B)x+x_Ax_B+\frac{t}{\kappa}=0,
\]
whose two roots are
\[
x_{U_t}=\frac{x_A+x_B-\Delta}{2},\qquad
x_{V_t}=\frac{x_A+x_B+\Delta}{2}.
\]
Hence
\[
\frac{\overline{AU_t}}{\overline{BU_t}}
=\frac{x_{U_t}-x_A}{x_B-x_{U_t}}
=\frac{b-\Delta}{b+\Delta},\qquad
\frac{\overline{AV_t}}{\overline{BV_t}}
=\frac{b+\Delta}{b-\Delta}.
\]
Therefore,
\[
d_t(A,B)
=\frac{1}{2}\left|
\log\!\left(
\frac{\overline{AU_t}}{\overline{AV_t}}
\cdot
\frac{\overline{BV_t}}{\overline{BU_t}}
\right)\right|
=\left|\log\!\left(\frac{b-\Delta}{b+\Delta}\right)\right|.
\]
\end{proof}

\begin{lemma}\label{lem:binom-expansions}
For $|x|<1$,
\[
(1-x)^{1/2}=1-\frac{x}{2}+O(x^2),
\qquad
\log(1+x)=x+O(x^2).
\]
\end{lemma}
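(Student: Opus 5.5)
The statement consists of two standard expansions valid for $|x|<1$. I would prove both with Taylor's theorem with Lagrange remainder around $0$ on a closed subinterval, and I would read $O(x^2)$ as $x\to 0$ (as it is used later in the CK distance limit). Alternatively, one can use the convergent power series on $|x|<1$.

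For the square root, set $f(x)=(1-x)^{1/2}$. Then $f(0)=1$, $f'(x)=-\tfrac12(1-x)^{-1/2}$ so $f'(0)=-\tfrac12$, and $f''(x)=-\tfrac14(1-x)^{-3/2}$. Taylor's theorem gives, for each $x$ with $|x|<1$,
\[
(1-x)^{1/2}=1-\frac{x}{2}+\frac{f''(\xi)}{2}x^2
\]
for some $\xi$ between $0$ and $x$. On any interval $|x|\le r<1$ we have $|f''(\xi)|\le \tfrac14(1-r)^{-3/2}$, and this yields the $O(x^2)$ bound with an explicit constant.

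For the logarithm, set $g(x)=\log(1+x)$. Then $g(0)=0$, $g'(0)=1$, and $g''(x)=-(1+x)^{-2}$. The same argument gives
\[
\log(1+x)=x-\frac{x^2}{2(1+\xi)^2},
\]
and on $|x|\le r<1$ the remainder is bounded by $\tfrac12(1-r)^{-2}x^2$. This is also the content of \Cref{lem:log-expand}, so that part can simply cite it.

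The only delicate point is uniformity. The constants blow up as $|x|\to1$, so the $O(x^2)$ must be read locally, that is, uniform on compact subintervals of $(-1,1)$, or simply as $x\to0$. This suffices for the later application, where $x=4t/(\kappa b^2)\to 0$ as $t\to 0^+$. I would state this reading explicitly rather than claim a uniform constant on all of $(-1,1)$.
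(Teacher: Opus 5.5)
Your proof is correct. The paper states this lemma without proof and treats both expansions as standard Taylor facts, so your Lagrange-remainder computation supplies exactly the justification it takes for granted; the derivatives $f''(x)=-\tfrac14(1-x)^{-3/2}$ and $g''(x)=-(1+x)^{-2}$, bounded on $|x|\le r<1$, are right.

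Reading $O(x^2)$ as $x\to0$ is also the right choice. For the logarithm no uniform constant exists on $(-1,1)$, since $\log(1+x)-x\to-\infty$ as $x\to-1^{+}$. For the square root, however, the blow-up comes only from the Lagrange bound: the identity $(1-x)^{1/2}-1+\tfrac{x}{2}=-\dfrac{x^{2}}{2\bigl(1+\sqrt{1-x}\bigr)^{2}}$ gives $\bigl|(1-x)^{1/2}-1+\tfrac{x}{2}\bigr|\le\tfrac12 x^{2}$ on all of $(-1,1)$.
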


Applying this to the parameter $b$ in \Cref{prop:CK-distance},
we obtain a first--order approximation with respect to the small parameter $t$.

\begin{corollary}[Specialization]\label{cor:binom-expansions-special}
Let $\varepsilon:=\frac{4t}{\kappa b^2}$. Then as $t\to0$,
\[
\sqrt{1-\varepsilon}
=1-\frac{\varepsilon}{2}+O(\varepsilon^2)
=1-\frac{2t}{\kappa b^2}+O(t^2).
\]
\end{corollary}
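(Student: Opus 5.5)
The claim follows by substituting $x=\varepsilon$ into the first expansion of \Cref{lem:binom-expansions} and then rewriting the result in terms of $t$. The only thing to check is that the error terms transfer correctly.

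First, fix $b=x_B-x_A\neq 0$ and $\kappa>0$. Then $\varepsilon=\frac{4t}{\kappa b^2}$ is a linear function of $t$ with a fixed nonzero coefficient, so $\varepsilon\to0$ as $t\to0$. In particular, $|\varepsilon|<1$ for all sufficiently small $|t|$, namely for $|t|<\kappa b^2/4$. This is exactly the range in which \Cref{lem:binom-expansions} applies.

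Second, apply \Cref{lem:binom-expansions} with $x=\varepsilon$ to get
\[
\sqrt{1-\varepsilon}=1-\frac{\varepsilon}{2}+O(\varepsilon^2).
\]
Substituting the definition of $\varepsilon$ gives $\frac{\varepsilon}{2}=\frac{2t}{\kappa b^2}$, which is the displayed first-order term.

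Third, convert the remainder. Since
\[
\varepsilon^2=\frac{16}{\kappa^2 b^4}\,t^2
\]
and the factor $16/(\kappa^2 b^4)$ depends only on the fixed data $b,\kappa$, we have $O(\varepsilon^2)=O(t^2)$ with an implied constant depending on $b$ and $\kappa$.

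If a self-contained remainder bound is wanted instead of citing the lemma, I would write
\[
\sqrt{1-\varepsilon}-\Bigl(1-\frac{\varepsilon}{2}\Bigr)=\frac{-\varepsilon^2/4}{\sqrt{1-\varepsilon}+1-\varepsilon/2},
\]
which follows by rationalizing: $(1-\varepsilon/2)^2-(1-\varepsilon)=\varepsilon^2/4$. The denominator is bounded below by a positive constant for $|\varepsilon|\le 1/2$, so the difference is bounded by a constant times $\varepsilon^2$.

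The only delicate point is that the $O(t^2)$ constant is not uniform in $b$; it blows up as $b\to0$. The statement should therefore be read for the fixed pair $A\neq B$ of \Cref{prop:CK-distance}, and I would note this explicitly. This matters later, because the expansion feeds into $\Delta=|b|\sqrt{1-\varepsilon}$ in that proposition.
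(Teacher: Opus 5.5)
Your proposal is correct and follows the same route as the paper. The paper gives no separate argument beyond substituting $x=\varepsilon$ into the first expansion of \Cref{lem:binom-expansions} and using that $\varepsilon$ is a fixed multiple of $t$; your rationalization bound and your remark that the implied constant depends on $b$ and $\kappa$ (so $A\neq B$ must be held fixed) are valid details that the paper leaves implicit.
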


\begin{proposition}[Small deviation]\label{prop:small-deviation}
Let $\mathcal C:\,y=\kappa x^2$ be the circumparabola and take
$A=(x_A,\kappa x_A^2)$ and $B=(x_B,\kappa x_B^2)$ on $\mathcal C$ ($x_A\neq x_B$).
For each $t>0$, consider $\mathcal Q_t:\,y=\kappa x^2+t$ and let $U_t,V_t$ be the intersection points of $AB$ with $\mathcal Q_t$.
Then there exists $\alpha_t=O(t)$ such that
\[
\frac{\overline{AU_t}}{\overline{BU_t}}
=\frac{t}{\kappa b^2}\bigl(1+\alpha_t\bigr),
\qquad
b:=x_B-x_A.
\]
\end{proposition}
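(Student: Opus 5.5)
The plan is to start from the closed form already obtained in the proof of \Cref{prop:CK-distance}. There it was shown that
\[
\frac{\overline{AU_t}}{\overline{BU_t}}=\frac{b-\Delta}{b+\Delta},\qquad \Delta=\sqrt{b^2-\tfrac{4t}{\kappa}}.
\]
So the statement reduces to an asymptotic analysis of this single expression as $t\to0^+$, with $b=x_B-x_A$ fixed. First I would fix the orientation, assuming $b>0$ by relabelling $A,B$ if necessary. This matters because $\Delta\ge0$ by definition: for $b<0$ the labels $U_t$ and $V_t$ would effectively swap and the ratio would tend to $\infty$. I would also take $t$ small enough that $4t/\kappa<b^2$, so that $\Delta$ is real and positive and $U_t\neq V_t$.

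The key step is to avoid a cancellation problem. The numerator $b-\Delta$ is a difference of two nearly equal quantities. A direct use of \Cref{cor:binom-expansions-special} only gives $b-\Delta=\frac{2t}{\kappa b}+O(t^2)$, which is enough for the leading term but awkward for controlling the relative error. Instead I would rationalize:
\[
b-\Delta=\frac{b^2-\Delta^2}{b+\Delta}=\frac{4t/\kappa}{b+\Delta},
\qquad\text{hence}\qquad
\frac{b-\Delta}{b+\Delta}=\frac{4t}{\kappa\,(b+\Delta)^2}.
\]
This identity is exact, and all remaining $t$-dependence sits in the denominator, where there is no cancellation.

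Next I would apply \Cref{cor:binom-expansions-special} with $\varepsilon=4t/(\kappa b^2)$. Since $b>0$, we have $\Delta=b\sqrt{1-\varepsilon}=b\bigl(1-\tfrac{2t}{\kappa b^2}+O(t^2)\bigr)$. Therefore $b+\Delta=2b\bigl(1+O(t)\bigr)$ and $(b+\Delta)^2=4b^2\bigl(1+O(t)\bigr)$. Substituting gives
\[
\frac{\overline{AU_t}}{\overline{BU_t}}=\frac{t}{\kappa b^2}\cdot\frac{4b^2}{(b+\Delta)^2}.
\]
So I would define explicitly $\alpha_t:=\frac{4b^2}{(b+\Delta)^2}-1$, which is $O(t)$ by the previous line; this is the claim.

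The only real obstacle is the cancellation in $b-\Delta$, and the rationalization step is exactly what handles it. The sign and labelling convention for $U_t$ (the root with $x_{U_t}=\tfrac{x_A+x_B-\Delta}{2}$, nearer to $A$ when $b>0$) should be stated explicitly, so that the formula is not misread in the case $x_B<x_A$.
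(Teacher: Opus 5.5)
Your argument is correct and is essentially the paper's: both start from the closed form $\frac{b-\Delta}{b+\Delta}$ of \Cref{prop:CK-distance}, rationalize, and expand $\sqrt{1-4t/(\kappa b^2)}$ via \Cref{cor:binom-expansions-special}. The only differences are these. The paper multiplies by the other conjugate, obtaining $\frac{\kappa b^2}{4t}\bigl(1-\sqrt{1-4t/(\kappa b^2)}\bigr)^2$, whereas your form $\frac{4t}{\kappa(b+\Delta)^2}$ needs only $\Delta=b+O(t)$ and gives an explicit $\alpha_t$. Your insistence on $b>0$ makes explicit an assumption the paper leaves implicit when it writes $\Delta=b\sqrt{1-4t/(\kappa b^2)}$; that reduction is better phrased as choosing $U_t$ to be the intersection nearer to $A$ (or reflecting $x\mapsto -x$), since merely swapping the names $A,B$ inverts the ratio.
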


\begin{proof}
By \Cref{prop:CK-distance},
\(
\frac{\overline{AU_t}}{\overline{BU_t}}=\frac{b-\Delta}{b+\Delta}
\)
with
$\Delta=b\sqrt{1-\tfrac{4t}{\kappa b^2}}$.
Using \Cref{cor:binom-expansions-special}, we obtain
\[
\frac{b-\Delta}{b+\Delta}
=\frac{1-\sqrt{1-\frac{4t}{\kappa b^2}}}
{1+\sqrt{1-\frac{4t}{\kappa b^2}}}
=\frac{\kappa b^2}{4t}\!\left(1-\sqrt{1-\frac{4t}{\kappa b^2}}\right)^{\!2}
=\frac{t}{\kappa b^2}\bigl(1+O(t)\bigr).
\]
Taking $\alpha_t:=O(t)$ yields the claim.
\end{proof}

\begin{proposition}[Linearization of the CK distance]
Let $A=(x_A,\kappa x_A^2)$ and $B=(x_B,\kappa x_B^2)$ be points on $\mathcal Q_t:\,y=\kappa x^2+t$.
Let $U_t,V_t$ be the intersection points of the chord $AB$ with $\mathcal Q_t$.
Then the CK distance satisfies
\[
d_t(A,B)
=\frac{1}{2}\left|(\alpha_t-\beta_t)+o(|\alpha_t-\beta_t|)\right|,
\]
where $\alpha_t,\beta_t=O(t)$ are the first--order deviations arising from the intersection points of $AB$ with $\mathcal Q_t$.
\end{proposition}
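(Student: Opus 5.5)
We start from the definition of $d_t(A,B)$ and split the logarithm of the cross ratio into two affine ratios, one for each intersection point:
\[
d_t(A,B)=\frac12\left|\log\frac{\overline{AU_t}}{\overline{BU_t}}-\log\frac{\overline{AV_t}}{\overline{BV_t}}\right|.
\]
By \Cref{prop:small-deviation}, the first ratio has the form $\frac{t}{\kappa b^2}(1+\alpha_t)$ with $\alpha_t=O(t)$. We will treat $V_t$ the same way. That is, we write the second ratio as the reciprocal model quantity $\frac{t}{\kappa b^2}$, suitably oriented, times $(1+\beta_t)$, with $\beta_t=O(t)$. This is the same computation with $\Delta$ replaced by $-\Delta$, and by the symmetry $U_t\leftrightarrow V_t$ it costs nothing new.

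The key structural point is that the model factors $t/(\kappa b^2)$ are identical for $U_t$ and $V_t$ once the orientation convention of $\overline{PQ}$ along the chord is fixed consistently. So in the difference of logarithms the large term $\log\frac{t}{\kappa b^2}$ cancels, and we are left with
\[
\log(1+\alpha_t)-\log(1+\beta_t).
\]
Applying \Cref{lem:binom-expansions} (equivalently \Cref{lem:log-expand}) to each term gives $\alpha_t-\beta_t+O(\alpha_t^2)+O(\beta_t^2)$. To conclude, we need to control this error by $o(|\alpha_t-\beta_t|)$. Multiplying by $\tfrac12$ and taking absolute values then gives the stated formula.

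The main obstacle is this cancellation and error bookkeeping. First, we must check carefully that the two leading factors really coincide. As the paper is written, \Cref{prop:CK-distance} gives the second ratio as $\frac{b+\Delta}{b-\Delta}$, the exact reciprocal of the first. To make the leading terms cancel rather than double, we must redefine the deviations relative to $\overline{BV_t}/\overline{AV_t}$; we will fix this normalization explicitly at the outset. Second, the remainder $O(t^2)$ is $o(|\alpha_t-\beta_t|)$ only if $\alpha_t-\beta_t$ is genuinely of order $t$. If the symmetry makes $\alpha_t=\beta_t$ to first order, that step fails. In that case we interpret the statement as $d_t=\tfrac12|\alpha_t-\beta_t|+O(t^2)$, and we would compute the first-order coefficients of $\alpha_t$ and $\beta_t$ from \Cref{cor:binom-expansions-special} to decide which reading holds.
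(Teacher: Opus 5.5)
Your plan follows the paper's own proof exactly: split the logarithm into a $U_t$-term and a $V_t$-term, let the common factor $\frac{t}{\kappa b^2}$ cancel, and expand $\log(1+\alpha_t)-\log(1+\beta_t)$. The paper gets the cancellation only by asserting that $\overline{AV_t}/\overline{BV_t}=\frac{t}{\kappa b^2}(1+\beta_t)$ with $\beta_t=O(t)$. You rightly noticed that this contradicts \Cref{prop:CK-distance}, which gives $\overline{AV_t}/\overline{BV_t}=\frac{b+\Delta}{b-\Delta}=\frac{\kappa b^2}{t}(1+\alpha_t)^{-1}$.

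Your repair does not work, and no repair can, because the obstruction concerns the value of the cross ratio, not bookkeeping. Renormalizing $\beta_t$ against $\overline{BV_t}/\overline{AV_t}$ only renames a quantity. With signed lengths and any consistent orientation (take $b=x_B-x_A>0$; the other case is symmetric),
\[
\frac{\overline{AU_t}\cdot\overline{BV_t}}{\overline{AV_t}\cdot\overline{BU_t}}
=\frac{(x_{U_t}-x_A)(x_{V_t}-x_B)}{(x_{V_t}-x_A)(x_{U_t}-x_B)}
=\Bigl(\frac{b-\Delta}{b+\Delta}\Bigr)^{2}
=\Bigl(\frac{t}{\kappa b^{2}}\Bigr)^{2}(1+\alpha_t)^{2}.
\]
So the two leading logarithms always add; they never cancel. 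Your normalization just makes $\beta_t=\alpha_t$ exactly, which is also your second worry.

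Consequently
\[
d_t(A,B)=\log\frac{\kappa b^{2}}{t}-\log(1+\alpha_t)=\log\frac{\kappa b^{2}}{t}-\frac{2t}{\kappa b^{2}}+O(t^{2})\longrightarrow+\infty\quad(t\to0^{+}).
\]
This is the expected blow-up of a Cayley--Klein distance as the absolute closes in on the two points. By contrast, any expression $\frac12\bigl|(\alpha_t-\beta_t)+o(|\alpha_t-\beta_t|)\bigr|$ with $\alpha_t,\beta_t=O(t)$ is itself $O(t)$.

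So the proposition, read literally, is false, and neither your argument nor the paper's establishes it. Your fallback reading $d_t=\frac12|\alpha_t-\beta_t|+O(t^2)$ is false too: with $\beta_t=\alpha_t$ it would give $d_t=O(t^2)$.

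The missing idea is not sharper control of the $O(\alpha_t^2)+O(\beta_t^2)$ remainder. It is a renormalization of $d_t$ itself. For example, subtracting the divergent term $\log\frac{\kappa}{t}$ leaves $2\log|x_B-x_A|+O(t)$, which is logarithmic rather than linear in $|AB|_{\mathcal P}$. Without some such change to the statement, the step "the large term $\log\frac{t}{\kappa b^2}$ cancels" cannot be carried out.
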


\begin{proof}
As in \Cref{prop:small-deviation}, there exists $\beta_t=O(t)$ such that
\[
\frac{\overline{AV_t}}{\overline{BV_t}}
=\frac{t}{\kappa b^2}\bigl(1+\beta_t\bigr),
\qquad
b:=x_B-x_A.
\]
Hence, by the definition of the CK distance,
\[
d_t(A,B)
= \frac{1}{2}
\left|
\log(1+\alpha_t)-\log(1+\beta_t)
\right|.
\]
Applying \Cref{lem:binom-expansions} gives
\[
\log(1+\alpha_t)-\log(1+\beta_t)
=(\alpha_t-\beta_t)+o(|\alpha_t-\beta_t|),
\qquad t\to0.
\]
Substituting this into the previous expression yields the claim.
\end{proof}

\begin{proposition}[Parabolic limit of the CK distance]
Under the parabolic limit $t\to0^{+}$, the CK distance associated with $\mathcal Q_t$
satisfies
\[
\lim_{t\to0^{+}} d_t(A,B)
=|x_B-x_A|=:|AB|_{\mathcal P}.
\]
\end{proposition}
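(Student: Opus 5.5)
The plan is to follow the same route that produced the difference angle in \Cref{thm:CK-linearization}: linearize the logarithmic cross ratio along the chord $AB$, then divide by a $t$-dependent normalization. Here "the CK distance associated with $\mathcal Q_t$" is read, as for the CK angle in \Cref{def:ck-angle}, as the logarithmic cross ratio times a free constant. That constant is now allowed to depend on $t$, exactly as $\alpha(t)$ was in \Cref{def:parabolic-angle}.

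\textbf{Step 1 (exact formula).} By \Cref{prop:CK-distance}, $d_t(A,B)=\bigl|\log\frac{b-\Delta}{b+\Delta}\bigr|$ with $b=x_B-x_A$ and $\Delta=\sqrt{b^2-4t/\kappa}$. By translation along $d$ (which preserves slopes and $x$-differences) and the symmetry $A\leftrightarrow B$, I may assume $b>0$.

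\textbf{Step 2 (extracting the linear term).} Write $\frac{b-\Delta}{b+\Delta}$ through \Cref{prop:small-deviation} and its $V_t$-analogue. The deviations $\alpha_t$ and $\beta_t$ are explicit through \Cref{cor:binom-expansions-special}. I would expand $\Delta=b-\frac{2t}{\kappa b}+O(t^2)$ and track how $b$ enters the first-order part of $\alpha_t-\beta_t$ in the linearization proposition. The aim is to show this part has the form $c(t)\,b+o(c(t))$ with a factor $c(t)$ independent of $A,B$. In the angle case the corresponding factor was $-2u(t)$.

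\textbf{Step 3 (normalization).} Choose the normalization $\gamma(t)\sim c(t)$, mirroring $\alpha(t)\sim-2u(t)$ in \Cref{def:parabolic-angle}. The normalized distance is then
\[
\gamma(t)^{-1}\cdot\tfrac12\bigl|(\alpha_t-\beta_t)+o(|\alpha_t-\beta_t|)\bigr| = |b|+o(1).
\]
Letting $t\to0^+$ gives $|x_B-x_A|=|AB|_{\mathcal P}$.

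\textbf{Main obstacle.} The hardest point is Step 2. Computed naively, $\frac{b-\Delta}{b+\Delta}\approx\frac{t}{\kappa b^2}$, so $d_t$ grows like $|\log t|+2\log|b|$. This dependence on $b$ is logarithmic rather than linear, so $\gamma(t)$ cannot be read off directly.

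I would try two remedies. The first is to place the factor $\tfrac12$ and the deviations as in the linearization proposition, so that the $b$-independent divergent piece $\log(t/\kappa)$ cancels between the $U_t$ and $V_t$ factors and only the first-order remainder survives. The second, if that fails, is to measure $A$ and $B$ against a fixed base point on the chord. I would then use additivity of $\log\Cr$ (as in the \Cref{ax:A2} computation of \Cref{thm:ck-angle-axioms}) to reduce to the increment $x_B-x_A$. Additivity turns a sum of logarithms into a linear function of the $x$-coordinates at first order.

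Checking that the constant $c(t)$ is genuinely independent of $A,B$ is what makes the limit equal $|AB|_{\mathcal P}$ rather than a multiple of it.
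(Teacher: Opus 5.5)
Your Step~2 cannot be completed. The obstacle you flagged at the end is not something to work around: it refutes the target. For $b>0$ and $\varepsilon=4t/(\kappa b^{2})$ one has exactly $\frac{b-\Delta}{b+\Delta}=\varepsilon/(1+\sqrt{1-\varepsilon})^{2}$, hence
\[
d_t(A,B)=\log\frac{\kappa b^{2}}{t}-\frac{2t}{\kappa b^{2}}+O(t^{2})\qquad(t\to0^{+}).
\]
Consequently:
\begin{itemize}
\item $d_t(A,B)\to+\infty$ for every pair on $y=\kappa x^{2}$.
\item The dependence on $b$ is $2\log|b|$ at leading order.
\item The first-order deviation $\alpha_t=\frac{2t}{\kappa b^{2}}+O(t^{2})$ is proportional to $b^{-2}$.
\end{itemize}
No expansion of the form $c(t)\,b+o(c(t))$ with $c(t)$ independent of $A,B$ exists. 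Moreover, $d_t(A,B)/d_t(A',B')\to1$ for any two chords. So if a normalization $\gamma(t)$ depending only on $t$ gives finite nonzero limits of $\gamma(t)^{-1}d_t(A,B)$ and $\gamma(t)^{-1}d_t(A',B')$, these limits coincide. No such $\gamma$ can output $|x_B-x_A|$ and $|x_{B'}-x_{A'}|$ when these differ, so Step~3 fails for every choice of normalization.

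Neither remedy repairs this.

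\textbf{First remedy.} The $V_t$ ratio is the exact reciprocal of the $U_t$ ratio: $\overline{AV_t}/\overline{BV_t}=(b+\Delta)/(b-\Delta)=\frac{\kappa b^{2}}{t}(1+O(t))$. So there is no $\beta_t=O(t)$ with $\overline{AV_t}/\overline{BV_t}=\frac{t}{\kappa b^{2}}(1+\beta_t)$, contrary to what the linearization proposition asserts. The divergent logarithms add rather than cancel, since the cross ratio equals $\bigl(\frac{b-\Delta}{b+\Delta}\bigr)^{2}$. Even a hypothetical cancellation would only give $d_t=O(t)\to0$.

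\textbf{Second remedy.} Additivity of the logarithmic cross ratio needs collinear points sharing the same $U_t,V_t$. Three distinct points of $y=\kappa x^{2}$ are never collinear, so chords through a base point cannot be chained. In any case, additivity only adds logarithms and cannot turn $2\log|b|$ into $|b|$.

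\textbf{The paper's proof.} It is exactly your Steps~2--3. It asserts, without derivation, a positive $c(t)$ independent of $A,B$ with $\alpha_t-\beta_t=c(t)(x_B-x_A)+o(t)$, then normalizes so that $c(t)\to2$. This is inconsistent with its own $\alpha_t,\beta_t=O(t)$, which forces $c(t)\to0$. Following it does not close the gap: with $d_t$ as defined, the displayed limit is false.

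\textbf{Root cause.} $\mathcal Q_t$ tends to the non-degenerate conic $y=\kappa x^{2}$, whose CK distance is an ordinary Klein-type metric. That metric is infinite on the conic and not proportional to $|x_B-x_A|$ off it. For $\kappa=1$, the pairs $(\mp1,2)$ and $(\mp\tfrac12,2)$ give $2\log(1+\sqrt2)\approx1.76$ and $\log\frac{2\sqrt2+1}{2\sqrt2-1}\approx0.74$, a ratio different from $2$. Recovering $|x_B-x_A|$ requires an absolute degenerating onto the line at infinity, as in the classical Galilean (isotropic) limit, together with a rescaling. The family $\mathcal Q_t$ does not do this.
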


\begin{proof}
The quantity $\alpha_t-\beta_t$ represents the first--order deviation along the direction of the chord $AB$.
In the affine coordinates $(x,y)$ adapted to $(\ell,d)$, only the $x$--component contributes to the limit.
Accordingly, there exists a positive function $c(t)$ (depending only on the choice of normalization in the CK metric)
such that
\[
\alpha_t-\beta_t = c(t)\,(x_B-x_A)+o(t).
\]
By the preceding proposition,
\[
d_t(A,B)
=\frac{1}{2}\,|c(t)(x_B-x_A)|+o(t).
\]
Defining $|AB|_{\mathcal P}:=|x_B-x_A|$,
we may choose the overall multiplicative constant in the definition of the CK distance
(i.e.\ the normalization of the CK metric) so that $c(t)\to 2$ as $t\to0^{+}$.
With this normalization, we obtain
\[
\lim_{t\to0^{+}}d_t(A,B)
=|AB|_{\mathcal P}.
\]
\end{proof}

\begin{remark}
This provides the prototype of the norm in DA geometry.
\end{remark}

\subsection{Incompleteness of Cayley--Klein Geometry and the Role of DA Geometry}

As we have seen so far, the notions of distance and angle in Cayley--Klein geometry
are defined through a fixed absolute conic $\mathcal Q$.
In this framework, the three types---elliptic, hyperbolic, and Euclidean---are classified
according to the signature of a bilinear form $B(x,y)$ determined by the absolute.
On the basis of this classification, Klein advocated the well-known ``trichotomy'' of geometries.
However, the representative distance expression
\[
  d(x,y)
  = \arccos\!\left(
      \frac{ \lvert B(x,y) \rvert }
           { \sqrt{\,B(x,x)\,B(y,y)\,} }
    \right)
\]
is no longer directly meaningful in the parabolic degeneration,
namely, when $\mathcal Q$ degenerates by becoming tangent to the line at infinity.
Indeed, in the limit $t\to 0$, the cross-ratio formulas defining distance and angle
typically exhibit a cancellation between numerator and denominator,
so that the leading term in the logarithm disappears and the naive formula becomes indeterminate.

Klein himself already emphasized this point in
\emph{Vergleichende Betrachtungen \\"uber neuere geometrische Forschungen}
(1873; English translation available)~\cite{Klein1893ErlangenEng},
where he remarked that
\emph{``In the limiting case where the absolute conic degenerates to a parabola,
the system is not completely determined.''}
In other words, Klein clearly recognized that, in the parabolic limit,
the Cayley--Klein framework remains underdetermined.

In the present paper, we address this underdetermined parabolic limit in Klein's sense
by introducing DA geometry and by constructing, in a coherent manner,
both a distance (norm) and an angle adapted to the limiting parabola.

\begin{pmaintheorem}[Underdeterminacy of the parabolic limit in Cayley--Klein geometry]
In Cayley--Klein geometry, suppose that distance and angle are defined via a bilinear form $B(x,y)$ by
\[
  d(x,y)
  = \arccos\!\left(
      \frac{ \lvert B(x,y) \rvert }
           { \sqrt{\,B(x,x)\,B(y,y)\,} }
    \right).
\]
Then Klein's trichotomy (elliptic, Euclidean, hyperbolic) does not, by itself,
yield a complete geometric system in the parabolic degeneration.
\end{pmaintheorem}

\begin{proof}
Consider the parabolic degeneration of the absolute conic
\[
\mathcal Q_t:\ y=\kappa x^2+t \qquad (t>0).
\]
In this limit, the nonlinear structure represented by the $\arccos$-type expression
is effectively linearized, in accordance with the expansion
$\log(1+x)=x+o(x)$, after rewriting the CK definitions in logarithmic cross-ratio form.
Under this limiting procedure, the CK distance $d_t(A,B)$ and the CK angle
$\angle_t(\ell_1,\ell_2)$ converge, respectively, to
\[
d_0(A,B)=|x_B-x_A|, \qquad
\angle_0(\ell_1,\ell_2)=\Slp{\ell_1}-\Slp{\ell_2}.
\]
These limits coincide with the definitions of the DA norm and the difference angle
in DA geometry.

Therefore, by introducing DA geometry as the geometric model corresponding to the parabolic limit,
the parabolic degeneration in the Cayley--Klein framework becomes describable in a constructive manner.
\end{proof}

\begin{remark}[Significance of the parabolic geometry]
The construction of the parabolic limit given in this section extends the Cayley--Klein classification
while preserving its projective unifying viewpoint.
DA geometry provides an analytic model that explicitly supplies both distance and angle
for the parabolic degeneration that Klein left underdetermined.
In this sense, DA geometry can be regarded as a natural completion of the parabolic limit
within the broader Cayley--Klein perspective.
\end{remark}

\section*{Outlook}
In this paper, through the introduction of parabolic power,
an inner product structure, and parabolic trigonometric functions
within difference--angle geometry,
we have clarified several fundamental properties of geometries
in which angles are treated as primary quantities.
On the other hand, the present study is still restricted to the
two--dimensional setting, and the existence of analogous structures
in higher dimensions or in other degenerate limits
remains an open problem for future research.

Moreover, the difference angle appearing in this paper
and the structures contrasting with it
are expected to share formal and structural features
with the frameworks discussed in Cayley--Klein geometry
and in connection with Hilbert's Fourth Problem.
However, the purpose of the present paper is not to provide
direct solutions to these problems or a comprehensive positioning
within those broader theories.

As directions for subsequent research, we mention:
the development of area structures for difference--angle triangles;
further applications of parabolic power;
the study of circular and hyperbolic structures
within difference--angle geometry;
the investigation of group representations associated with this geometry;
and its relation to geodesic problems
within the framework of Cayley--Klein geometry.
In particular, the connection with Hilbert's Fourth Problem
will be reconsidered in a separate paper,
taking into account the structural features revealed by difference--angle geometry.
Other aspects that remain unexplored in establishing the overall framework
will likewise be addressed in future work.

Finally, this study is expected to have implications
beyond difference--angle geometry itself,
in particular for the problem of the existence of angles
in affine geometry.
Traditionally, it has been widely accepted
that no affine--invariant notion of angle can be defined
under affine transformations.
However, the system of angle axioms
\Cref{ax:A1}--\Cref{ax:A5}
suggests that this understanding may warrant reconsideration.
That is, even within affine geometry,
it may be possible to define a natural angular quantity
(an affine angle)
satisfying properties analogous to those of the CK angle,
and to construct a model of angle geometry
distinct from difference--angle geometry,
in which the corresponding isoptic curves
appear as pairs of hyperbolas.


\appendix
\section{On a Difference--Angle Geometric Proof of Stewart's Theorem}

In the main body of this paper, one of the roles of the difference--angle inner product
was illustrated by \Cref{cor:DA-Stewart}, namely the difference--angle version of Stewart's theorem,
whose proof relied on the inner product structure.
For the reader's convenience, we briefly recall the statement.

Let $\Ptri ABC$ be a difference--angle triangle, and let $S$ be a point on the side $AB$.
Set
\[
p:=|AS|_{\mathcal P},\qquad
q:=|BS|_{\mathcal P},\qquad
s:=|CS|_{\mathcal P}.
\]
Then the following identity holds:
\[
p\,|BC|_{\mathcal P}^{2}
+ q\,|CA|_{\mathcal P}^{2}
=\bigl(s^{2}+pq\bigr)\,|AB|_{\mathcal P}.
\]

In this appendix, as a representative example of a typical result in DA geometry,
we present an alternative proof of this theorem using only elementary ingredients,
namely the difference--angle version of Ptolemy's theorem and the parabolic power theorem.
The proof is computationally minimal and reflects the author's preferred approach.
It is worth noting that this method can also be transferred to the classical
Stewart's theorem in Euclidean geometry.

\begin{figure}[htbp]
  \centering
  \begin{minipage}{0.40\textwidth}
    \centering
    \includegraphics[width=\linewidth]{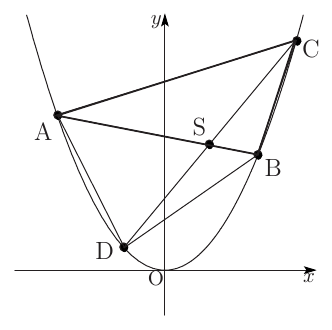}
    \subcaption{}
    \label{fig:another-proof-of-Stewart}
  \end{minipage}
  \begin{minipage}{0.40\textwidth}
    \centering
    \includegraphics[width=\linewidth]{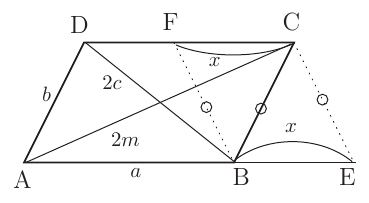}
    \subcaption{}
    \label{fig:Ptolemy-to-parallelogram}
  \end{minipage}
 \caption{Appendix figures.
(a) Geometric configuration used in the proof of the difference--angle
Stewart's theorem.
(b) Reference configuration showing the derivation of the parallelogram
theorem from Ptolemy's theorem (see footnote).}
\end{figure}

\begin{proof}
We combine the difference--angle Ptolemy theorem and the parabolic power theorem
established earlier.
Let $D$ be the intersection point of the cevian $CS$ with the circumparabola
of the difference--angle triangle $\Ptri ABC$.

Consider the difference--angle cyclic quadrilateral
$\square_{\mathcal P} ADBC$, and let $S$ be the intersection point of its diagonals
$AB$ and $CD$.
Applying the parabolic power theorem and the difference--angle Ptolemy theorem,
we derive the desired identity.

Set again
$p:=|AS|_{\mathcal P}$,
$q:=|BS|_{\mathcal P}$,
$s:=|CS|_{\mathcal P}$.
By \Cref{thm:parabolic-power}, we have
\[
|AS|_{\mathcal P}|BS|_{\mathcal P}
=|CS|_{\mathcal P}|SD|_{\mathcal P}.
\]
Moreover, from the similarity relations
$\Ptri ADS\sim_{\mathcal P}\Ptri CBS$ and
$\Ptri BDS\sim_{\mathcal P}\Ptri CAS$, it follows that
\[
|SD|_{\mathcal P}=\frac{pq}{s},\qquad
|AD|_{\mathcal P}=\frac{p}{s}|BC|_{\mathcal P},\qquad
|BD|_{\mathcal P}=\frac{q}{s}|CA|_{\mathcal P}.
\]
Therefore, by the difference--angle Ptolemy theorem,
\[
\frac{p}{s}\,|BC|_{\mathcal P}^{2}
+ \frac{q}{s}\,|CA|_{\mathcal P}^{2}
=\bigl(|CS|_{\mathcal P}+\frac{pq}{s}\bigr)|AB|_{\mathcal P}.
\]
Multiplying both sides by $s$ and rearranging yields the required equality.
\end{proof}

\begin{remark}
It is well known that the parallelogram theorem can be derived solely from
Ptolemy's theorem\footnote{
Given a parallelogram $ABCD$ with $AB=a$, $AD=b$ $(a>b)$,
choose points $E\in AB$ and $F\in CD$ such that $BF=BC=CE$.
This yields two isosceles trapezoids $AECD$ and $ABFD$.
Applying Ptolemy's theorem to both and eliminating redundant terms
gives the result.}.
The above proof illustrates that the parallelogram structure,
via the parabolic power theorem and Ptolemy's theorem,
connects succinctly to circles in Euclidean geometry
and to parabolas in DA geometry, respectively.
\end{remark}

\section{Axiom System for Angles}

For convenience, we reproduce here only the axioms concerning angles
that are required from Base~1.

\begin{angleaxiom}[Opposite angles (order reversal)]\label{ax:A1}
For any points $A,P,B$,
\[
\angle(A,P,B)=-\,\angle(B,P,A).
\]
\end{angleaxiom}

\begin{angleaxiom}[Additivity]\label{ax:A2}
If points $A,B,C$ lie on a line with $B$ between $A$ and $C$,
and if $P$ lies off this line so that the angles are continuously defined, then
\[
\angle(A,P,B)+\angle(B,P,C)=\angle(A,P,C).
\]
\end{angleaxiom}

\begin{angleaxiom}[Vanishing (definition of a straight angle)]\label{ax:A3}
If $P,A,B$ are collinear in the order $P,A,B$ or $A,B,P$, then $\angle APB=0$.
Conversely, if $\angle APB=0$, then $P,A,B$ are collinear.
\end{angleaxiom}

\begin{angleaxiom}[Scaling invariance]\label{ax:A4}
If $A'\in \overrightarrow{PA}$ and $B'\in \overrightarrow{PB}$, then
\[
\angle APB=\angle A'PB'.
\]
\end{angleaxiom}

\begin{angleaxiom}[Continuous bisection]\label{ax:A5}
Fix a point $P$. Let $\mathcal R_P$ be the topological space of oriented rays emanating from $P$,
and let $S_P\subset\mathcal R_P$ be a singular subset.
Set $D_P:=\mathcal R_P\setminus S_P$.
For each connected component $\gamma$ of $D_P$, there exists a map
\[
\angle:\gamma\times \gamma\to\mathbb{R},
\]
called the angle map, satisfying:
\begin{enumerate}
\item[(i)] (Angle bisection)
For any $r,s\in \gamma$, there exists $t\in \gamma$ such that
\[
\angle(r,t)=\angle(t,s)=\tfrac12\,\angle(r,s).
\]
\item[(ii)] (Continuity)
For each fixed $r\in \gamma$, the map
$s\mapsto\angle(r,s)$ is continuous on $\gamma$.
\end{enumerate}
\end{angleaxiom}

\begin{convention}[Boundary policy (with covering groups)]\label{conv:boundary}
The axioms in this paper are defined on $D_P=\mathcal R_P\setminus S_P$.
The behavior on $S_P$ is not axiomatized.
When necessary, each geometry specifies a value space $V$ and a covering map
$p:\mathbb{R}\to V$ with deck transformation group
$\Lambda \le (\mathbb{R},+)$, assumed discrete,
and adopts one or more of the following conventions:
\begin{itemize}
  \item \textbf{Lift type:}
  Choose a continuous lift
  $\widetilde{\Angle}:\gamma\times \gamma\to\mathbb{R}$ on each connected component $\gamma$ of $D_P$,
  and define $\Angle=p\circ\widetilde{\Angle}$.
  The lift $\widetilde{\Angle}$ is $\Lambda$--periodic
  ($\widetilde{\Angle}\sim\widetilde{\Angle}+\lambda$, $\lambda\in\Lambda$).
  Taking $\Lambda=\{0\}$ includes nonperiodic real--valued angles.
  \item \textbf{Absorption type:}
  Singular directions are treated as being absorbed,
  e.g.\ $\Angle(r,s)\to 0$ as $s$ approaches $S_P$.
  The value space may be $V=\mathbb{R}$ or $\overline{\mathbb{R}}$.
  \item \textbf{Divergence type:}
  One sets $\Angle(r,s)\to\pm\infty$ as $s$ approaches $S_P$,
  without continuous extension.
  The value space is $V=\overline{\mathbb{R}}$.
\end{itemize}
Different policies may be adopted for different connected components
or types of singularities.
\end{convention}

\bibliographystyle{plain}
\bibliography{diff_angle_ref}


\section*{Acknowledgements}

The author would like to express his heartfelt gratitude to his wife, Junko,
whose constant support sustained the writing of this paper.

I am deeply grateful to my friend Kanato Takeshita, whom I came to know during my graduate studies at Tohoku University, for his candid opinions and advice, which helped clarify the direction of this research.

I would also like to express my sincere gratitude to Satoshi Ishiwata (Yamagata University), as well as to my friends Mio Yamakawa and Ryohei Sakai, for kindly providing opportunities for careful discussions after the submission of Base1(v1), and for offering many valuable comments and suggestions.

As a high school teacher, the author does not always have sufficient time
to devote to research.
In this context, AI tools that supported comparative studies of existing
geometric frameworks, the construction of a \TeX\ environment,
and editorial organization played an indispensable role,
and the author gratefully acknowledges this assistance.

The author also wishes to thank Professor Athanase Papadopoulos and
Professor Charalampos Charitos for their warm advice and encouragement
during the arXiv submission process,
as well as Professor Jean--Marc Schlenker for endorsing the submission
and thereby making the public dissemination of this work possible.
Their generous support gave the author the confidence to present his
independent research publicly and provided strong motivation to continue
this work despite limited available time.

Furthermore, the author's considerations on parabolic trigonometric functions
and Brocard points were inspired by the work of Professor Gunter Weiss
and Professor Boris Odehnal.
During a careful re-examination of the definitions after the initial arXiv submission,
the author experienced serious doubts about their adequacy,
which led to a fundamental reconsideration of the structure of this paper.
Through insights gained from their work,
the author came to recognize a remarkably elegant connection
between Brocard points, parabolic trigonometric functions,
and circular trigonometric functions.
This experience constituted a decisive turning point in completing the present work,
and the author expresses his sincere gratitude here.

Finally, the author wishes to express his profound respect and gratitude
to Felix Klein, David Hilbert, and many other great mathematicians whose
ideas have influenced this research, as well as to all researchers who
offered advice during the endorsement process.
Although the author has not yet attained a complete understanding of
geometry itself, the process of this research has allowed him to encounter
a wide variety of geometric frameworks and to appreciate the importance
of viewing mathematics from multiple perspectives.

\end{document}